\newcommand{\C}{\mathbb{C}}
\newcommand{\N}{\mathbb{N}}
\newcommand{\cF}{\mathcal{F}}
\newcommand{\cG}{\mathcal{G}}
\newcommand{\cI}{\mathcal{I}}
\newcommand{\cO}{\mathcal{O}}
\renewcommand{\leq}{\leqslant}
\newcommand{\abs}[1]{\left\lvert#1\right\rvert}
\newcommand{\ip}[2]{\langle #1,#2\rangle}
\renewcommand{\Im}{\mathrm{Im}}
\renewcommand{\Re}{\mathrm{Re}}
\newcommand{\kahler}{k\ddot{a}hler}
\newcommand{\hormander}{H\ddot{o}rmander}
\DeclareMathOperator{\codim}{codim}
\DeclareMathOperator{\imaginary}{\sqrt{-1}}
\DeclareMathOperator{\dd}{\sqrt{-1}\partial\bar{\partial}}
\DeclareMathOperator{\dbar}{\bar{\partial}}
\DeclareMathOperator{\diag}{diag}
\DeclareMathOperator{\loc}{loc}
\DeclareMathOperator{\Tr}{Tr}
\DeclareMathOperator{\vol}{vol}
\DeclareMathOperator{\Wedge}{\bigwedge}
\numberwithin{equation}{section}       
\newtheorem{prop} {Proposition} [section]
\newtheorem{thm}[prop] {Theorem} 
\newtheorem{main thm}[prop] {Main Theorem}
\newtheorem{dfn}[prop] {Definition}
\newtheorem{lem}[prop] {Lemma}
\newtheorem{cor}[prop]{Corollary}
\newtheorem{exam}[prop]{Example}
\newtheorem{rem}[prop]{Remark}
\newtheorem{question}[prop]{Question}
\newtheorem*{thmA*}{\bf{Theorem A}} 
\newtheorem*{thmB*}{\bf{Theorem B}} 
\newtheorem*{thmC*}{\bf{Theorem C}} 
\newtheorem*{thmD*}{\bf{Theorem D}} 
\newtheorem*{thmE*}{\bf{Theorem E}}
\newtheorem{dfn*}{\bf{Definition}}
\begin{document}

\title{\Large{On the sharp $L^2$-estimates of Skoda division theorem} }
\author{Masakazu Takakura}
\date{}

\maketitle
\begin{abstract}
In this paper, we prove a Skoda type division theorem with sharp $L^2$-estimate. Furthermore, using this estimate, we provide new characterizations of plurisubharmonic functions. We also explain that the sharp $L^2$-division theorem leads the Guan-Zhou's sharp $L^2$-estimate for extension theorem.
\end{abstract}
\tableofcontents
\section{Introduction}
\subsection[Background]{Background}
Let $\Omega$ be a domain in $\C^n$. Let $f, g_1, \dots, g_r$ be holomorphic functions on $\Omega$. The classical division problem askes the question of when holomorphic functions $\{F_i\}_{i=1} ^{r}$ on $\Omega$ exist such that $$f = \sum g_i F_i.$$

The first result on this problem was obtained by Oka(\cite{Oka50}): if $\Omega$ is a psuedoconvex domain and $f_x \in \sum g_{i,x} \cO_x $ for any $x\in\Omega$, then a global solution $\{F_i\} \in \cO^{\oplus r}(\Omega)$ exist. This is the one of important results in correspondences between solvability of holomorphic equations and convexities. Oka's division theorem is an algebraically perfect criterion, but determining the existence of local solutions is inherently difficult.
Subsequently, Skoda obtained the following $L^2$-effective criterion of the division problem based on $\rm{\hormander}$'s $L^2$-existence theorem for $\dbar$-equation.
\begin{thm}[\cite{Skoda72}]\label{Skoda division}
    Let $\Omega$ be a psuedoconvex domain in $\mathbb{C}^n$ and $\phi$ be a plurisubharmonic function on $\Omega$. Let  $g = (g_1,\dots,g_r)$ be non-zero $r$-tuple holomorphic functions and put $q = \min(n,r-1)$. Then for any holomorphic function $f$ satisfying 
    \begin{equation}
        \int_{\Omega}\abs{f}^2 \abs{g}^{-2(q+1+\epsilon)} e^{-\phi} d\lambda < +\infty
    \end{equation}
    for lebesgue measure $d\lambda$, there exists a tuple of holomorphic function $F = (F_1,\dots,F_r)$ satisfying
    \begin{gather}
        \sum_{i=1}^{r} F_{i} g_i = f \;\;\; and\\
        \int_{\Omega} \abs{F}^2 \abs{g}^{-2(q+\epsilon)} e^{-\phi} d\lambda \leq(1+q/\epsilon) \int_{\Omega} \abs{f}^2 \abs{g}^{-2(q+1+\epsilon)} e^{ -\phi} d\lambda.
    \end{gather}
\end{thm}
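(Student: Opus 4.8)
The plan is to convert the division problem into a $\dbar$-equation valued in the kernel bundle of the morphism $g\colon\cO^{\oplus r}\to\cO$, and to solve it with a H\"ormander--Andreotti--Vesentini type $L^2$-estimate in which a single pointwise curvature inequality produces the constant. First I would reduce to smooth data on nice domains: exhausting $\Omega$ by smoothly bounded strongly pseudoconvex $\Omega_\nu\Subset\Omega$ and approximating $\phi$ from above by smooth plurisubharmonic functions, it suffices to solve the problem on each $\Omega_\nu$ with the uniform estimate and then pass to the limit by a normal-families argument using the uniform $L^2$-bounds (a bounded-norm limit of holomorphic solutions is holomorphic and inherits the equation and the bound). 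I would work on the complement of the common zero locus $Z=\{g_1=\dots=g_r=0\}$, an analytic set of measure zero; the integrability of $f$ against the very singular weight $\abs{g}^{-2(q+1+\epsilon)}$ forces any solution constructed off $Z$ to extend holomorphically across $Z$ by a removable-singularity argument.

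Next, the ansatz. The smooth $r$-tuple $V=(V_i)$ with $V_i=f\bar g_i/\abs{g}^2$ satisfies $\sum_i V_i g_i=f$ wherever $g\neq0$, and a direct computation gives the crucial norm identity
\[
\int_\Omega\abs{V}^2\abs{g}^{-2(q+\epsilon)}e^{-\phi}\,d\lambda=\int_\Omega\abs{f}^2\abs{g}^{-2(q+1+\epsilon)}e^{-\phi}\,d\lambda=:I.
\]
I look for a holomorphic solution of the form $F=V-u$, where $u$ is valued in the kernel bundle $\cS:=\ker(g\colon\cO^{\oplus r}\to\cO)$, so that $\sum_i F_i g_i=f$ persists, and $\dbar u=\dbar V$, so that $F$ becomes holomorphic. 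The point that makes the scheme work is that $w:=\dbar V$ is automatically $\cS$-valued: applying $g$ gives $\sum_i(\dbar V_i)g_i=\dbar(\sum_i V_i g_i)=\dbar f=0$. Moreover $V$ is pointwise a multiple of $\bar g$, hence orthogonal to $\cS$, so $\abs{F}^2=\abs{V}^2+\abs{u}^2$ pointwise; it will therefore be enough to produce $u$ with $\int_\Omega\abs{u}^2\abs{g}^{-2(q+\epsilon)}e^{-\phi}\,d\lambda\leq(q/\epsilon)\,I$.

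The core is the weighted $L^2$-estimate for $\dbar$ on the Hermitian holomorphic bundle $\cS$, with the metric induced from the trivial bundle $\cO^{\oplus r}$ and the weight $e^{-\phi}\abs{g}^{-2(q+\epsilon)}$. The Andreotti--Vesentini/H\"ormander machinery reduces the solvability of $\dbar u=w$, together with $\int\abs{u}^2\leq\int\ip{B^{-1}w}{w}$ in the weighted norm, to the positivity of the curvature operator $B=\sqrt{-1}\,\Theta(\cS)+(q+\epsilon)\,\dd\log\abs{g}^2\otimes\mathrm{Id}+\dd\phi\otimes\mathrm{Id}$ acting on $\cS$-valued $(n,1)$-forms. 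Here $\sqrt{-1}\,\Theta(\cS)\leq0$ in the sense of Griffiths, since $\cS$ is a subbundle of a flat bundle (Gauss--Codazzi: the curvature of $\cS$ equals $-\beta^*\wedge\beta$ for the second fundamental form $\beta$), and $\dd\phi\geq0$ by plurisubharmonicity, so all the positivity must come from the term $(q+\epsilon)\,\dd\log\abs{g}^2$. I expect the main obstacle to be the resulting pointwise linear-algebra inequality: one must show that, at each point with $g\neq0$, the operator $B$ is invertible on the relevant subspace and
\[
\ip{B^{-1}w}{w}\leq\frac{q}{\epsilon}\,\frac{\abs{f}^2}{\abs{g}^2}
\]
after the substitution $w=\dbar V$. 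This amounts to controlling the second fundamental form of $\cS$ simultaneously against $\dd\log\abs{g}^2$, and the factor $q=\min(n,r-1)$ enters precisely here: since $\mathrm{rk}\,\cS=r-1$ and the $(0,1)$-degree contributes at most $n$, at most $q$ units of the available $(q+\epsilon)$-fold curvature are consumed to dominate the negative second fundamental form, leaving a surplus of $\epsilon$ that yields the inverse factor $q/\epsilon$.

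Finally, integrating the pointwise inequality against $e^{-\phi}\abs{g}^{-2(q+\epsilon)}\,d\lambda$ gives $\int\ip{B^{-1}w}{w}\leq(q/\epsilon)\,I$, so the $\dbar$-estimate produces $u$ with $\int_\Omega\abs{u}^2\abs{g}^{-2(q+\epsilon)}e^{-\phi}\,d\lambda\leq(q/\epsilon)\,I$. Then $F=V-u$ is holomorphic, solves $\sum_i F_i g_i=f$, and by the pointwise orthogonality $\int_\Omega\abs{F}^2\abs{g}^{-2(q+\epsilon)}e^{-\phi}\,d\lambda=I+\int_\Omega\abs{u}^2\abs{g}^{-2(q+\epsilon)}e^{-\phi}\,d\lambda\leq(1+q/\epsilon)\,I$ on each $\Omega_\nu$. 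Undoing the exhaustion and the regularization of $\phi$ through the normal-families limit then completes the proof.
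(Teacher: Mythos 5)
Your proposal is correct and is essentially the classical Skoda argument that the paper itself invokes for this theorem and reuses as the skeleton of its proof of Theorem A: the ansatz $V=f\bar g/\abs{g}^2$, reduction to $\dbar u=\dbar V$ on the kernel bundle $\cS$, pointwise orthogonality of $V$ to $\cS$, Gauss--Codazzi together with the inequality $q\Tr\beta\wedge\beta^*\ge_q-\beta^*\wedge\beta$, and a weighted H\"ormander-type estimate producing the factor $q/\epsilon$. The only divergence from the paper's own route is that the paper recovers this statement as a limiting case ($\delta\to\epsilon$) of its sharp Theorem A, whose proof substitutes the twisted Ohsawa--Takegoshi a priori estimate for plain H\"ormander; for the non-sharp constant $1+q/\epsilon$ your direct use of H\"ormander suffices.
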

This theorem implies that if $\abs{f}^2 \abs{g}^{-2(q+1)}$ is locally integrable around $x \in \Omega$ then $f_x \in \sum g_{i,x} \cO_x $. As an applications of this theorem (or more general results\cite{Sko 78}), significant results in complex analysis and complex geometry are known, such as the Skoda-Briancon theorem(\cite{Skoda 74}) and Siu's deformation invariance of plurigenera(\cite{Si98}). Furthermore, various generalization or new proof of Skoda's $L^2$ division theorem are known(\cite{A24},\cite{O04},\cite{Sko 78},\cite{V08}). We are interested in whether these estimate are optimal. One can see that the Skoda's original $L^2$-estimate includes the following equality hold case.
\begin{exam}\label{equality of original skoda}
    We consider the division problem $F_0 + \sum_{i=1}^{n} F_i z_i =1$ on $\mathbb{C}^n$. Then $F = (1,0,\dots ,0)$ is the $L^2 (\mathbb{C}^n, (1+\abs{z}^2)^{-(n+\epsilon)})$ minimum solution. Then the following equality holds: 
    $$\int_{\mathbb{C}^n} \abs{F}^2 (1+\abs{z}^2)^{-(n+\epsilon)} =  \int_{\mathbb{C}^n} (1+\abs{z}^2)^{-(n+\epsilon)} =(1+n/\epsilon) \int_{\mathbb{C}^n} 1\cdot(1+\abs{z}^2)^{-(n+1+\epsilon)}$$
\end{exam} 
However, the optimal estimate of $L^2$-division theorem on bounded psuedoconvex domain has not been obtained so far. On the other hand, regarding the $L^2$-extension theorem, several optimal results are known. The first was obtained by Blocki (\cite{blocki suita}). He showed optimal $L^2$-extension theorem on psuedoconvex domains which includes resolution of Suita's conjecture. Later, a more general results was obtained by Guan and Zhou (\cite{GZ12},\cite{GZ15}). Their proof relied on making the best use of the twisted $\dbar$ estimates by Ohsawa-Takegoshi(\cite{OT87}). Motivated by these recent development of $\bar{\partial}$-estimate techniques, purpose of this paper is to revisit the $L^2$-estimate of division theorem.
Our first resuls establishes the division theorem with sharp $L^2$-estimate by using Guan and Zhou's $\dbar$-estimate technique. The inequality estimate in our theorem includes the cases of equality in more natural settings than Example \ref{equality of original skoda} (see Example \ref{sharpness of estimate}). In section \ref{Characterizing plurisubharmonicity via the sharp L^2-division property}, we discuss the inverse problem of whether the sharp estimate of division theorem can characterize plurisubharmonic function. This is inspired by \cite{DNWZ23},\cite{DWZ18}.
In section \ref{A proof of sharp L2 extension theorem}, we further explain how the sharp $L^2$-division theorem leads the sharp $L^2$-extension theorem. 
In \cite{A25}, it is shown that a twisted $L^2$-estimate of division theorem leads an $L^2$-extension theorem. By replacing this twisted estimate with the sharp estimate, we recover the Guan-Zhou's sharp estimates.

\par
\subsection[Main Theorem]{Main Theorem}
Let $E$ and $Q$ be holomorphic vector bundles of rank $r_E$ and $r_Q$ respectively on an $n$-dimensional weakly psuedoconvex $\kahler$ manifold $(X,\omega)$. Let $h_E$ and $e^{-\phi}$ be smooth hermitian metrics of $E$ and $\det{Q}$, $\Theta_E$ be the chern curvature of $(E,h)$. Let $$g \colon E \rightarrow Q$$ be a generically surjective bundle morphism, $e^{-\tilde{\phi}}$ be a (possibly singular) hermitian metric of $\det{Q}$ induced by $g$. Put $$\Phi = \tilde{\phi} -\phi$$ as a well-defined quasi plurisubharmonic function on $X$, for integer $0\le k \le n$, $q=\min{(n-k,r_E - r_Q)}$.

\begin{thmA*}\label{sharp L^2 division}
   Assume that there exists a triple $(C,D,S) \in \cG_{\Phi}$ (see {\rm{Definition \ref{def of gain}}} bellow) such that

    \begin{gather}
        \imaginary\Theta_E \ge_q 0 \label{condition 1}\\
        \left(q r_Q +1 + \frac{r_Q}{S(\Phi)} \right)\imaginary\Theta_E \ge_q \left(q+\frac{1}{S(\Phi)}\right) \dd \phi \otimes Id_E \label{condition 2}.
    \end{gather}
     Then for any $\dbar$-closed $(n,k)$-form $f$ with $L^2_{\loc}$ coefficients taking value in $Q$ such that 
    \begin{equation}
        \int_X \left(C(\Phi) + qD (\Phi)\right) \abs{g^* f}^2_{h_E,\omega} e^{-q\Phi} dV_{\omega} < +\infty ,
    \end{equation}
there exists a $\dbar$-closed $(n,k)$-form $F$ with $L^2_{\loc}$ coefficients taking value in $E$ such that $g(F) = f$ and 
    \begin{gather}
        \int_X C(\Phi) \abs{F}_{h_E,\omega} ^2 e^{-q\Phi} dV_{\omega} \leq \int_X \left(C(\Phi) + qD (\Phi)\right)\abs{g^* f}^2_{h_E,\omega} e^{-q\Phi} dV_{\omega}.
    \end{gather}    
\end{thmA*}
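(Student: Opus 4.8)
The plan is to run the Guan--Zhou optimal $\dbar$-method inside Skoda's division scheme, with the two weight functions supplied by the gain triple $(C,D,S)\in\cG_{\Phi}$.

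\emph{Step 1: reduction to a $\dbar$-equation.} Equip $Q$ with the quotient metric induced from $(E,h_E)$ by $g$ (this is the metric implicit in the adjoint $g^{*}$). With it, $g\circ g^{*}=\id_Q$ on the locus $X_0\subset X$ where $g$ is surjective, and $\beta:=g^{*}f$ is the pointwise minimal lift of $f$, valued in $\Im g^{*}=(\ker g)^{\perp}$ with $g(\beta)=f$. Since $g$ is holomorphic and $f$ is $\dbar$-closed, $g(\dbar\beta)=\dbar f=0$, so $\lambda:=\dbar\beta$ takes values in $\ker g$. If I can solve $\dbar u=\lambda$ with $u$ valued in $\ker g$, then $F:=\beta-u$ is $\dbar$-closed and satisfies $g(F)=f$; moreover the pointwise orthogonality $\beta\perp u$ gives $\abs{F}^{2}=\abs{\beta}^{2}+\abs{u}^{2}=\abs{g^{*}f}^{2}+\abs{u}^{2}$. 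Thus the whole theorem reduces to solving this $\dbar$-equation with the single bound
\[
\int_{X}C(\Phi)\,\abs{u}^{2}_{h_E,\omega}e^{-q\Phi}\,dV_{\omega}\ \le\ \int_{X}q\,D(\Phi)\,\abs{g^{*}f}^{2}_{h_E,\omega}e^{-q\Phi}\,dV_{\omega},
\]
after which $\int_X C(\Phi)\abs{F}^2 e^{-q\Phi}dV_\omega\le\int_X(C(\Phi)+qD(\Phi))\abs{g^*f}^2 e^{-q\Phi}dV_\omega$ follows at once.

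\emph{Step 2: the twisted estimate.} To achieve the sharp, constant-free bound I would use the two-function twisted Bochner--Kodaira--Nakano inequality exploited by Guan and Zhou: for compactly supported smooth test forms $\alpha$ one has an a priori inequality whose curvature term $[\,\Xi,\Lambda]$ is built from $C(\Phi)$, $D(\Phi)$ and a twist governed by $S(\Phi)$, and solvability of $\dbar u=\lambda$ with the bound of Step~1 follows once $[\,\Xi,\Lambda]$ dominates the pairing against $\lambda$. The defining relations of $\cG_{\Phi}$ (Definition \ref{def of gain}) are precisely the differential identities linking $C,D,S$ that make the twist optimal, i.e.\ that force the first-order terms $\partial C(\Phi)$, $\partial D(\Phi)$ to cancel rather than produce a lossy factor.

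\emph{Step 3 (the crux): Skoda curvature positivity.} Because $\dbar\beta=(\dbar g^{*})f$, the datum $\lambda$ is controlled by the second fundamental form of the exact sequence $0\to\ker g\to E\xrightarrow{g}Q\to0$. Expressing $\Xi$ through $\imaginary\Theta_E$, $\dd\phi$ and this second fundamental form and contracting, I would invoke Skoda's rank estimate: the Hermitian form in question can fail to be positive in at most $r_E-r_Q$ bundle directions, while the $(n,k)$ form-degree contributes $n-k$, so the controlling integer is $q=\min(n-k,r_E-r_Q)$, which is exactly why $q$ and $qr_Q$ occur in \eqref{condition 1}--\eqref{condition 2}. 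Conditions \eqref{condition 1} and \eqref{condition 2} are then precisely what yields $\langle[\Xi,\Lambda]\alpha,\alpha\rangle\ge$ the contribution of $\lambda$. \textbf{This positivity computation is the main obstacle}: the difficulty is to carry out the comparison \emph{sharply}, so that the curvature coming from $\phi$ and from $\det Q$ (through $\Phi=\tilde\phi-\phi$), the twist terms, and the second fundamental form cancel to leave exactly the factor $qD(\Phi)$, with none of the $(1+\epsilon)$-type loss of the classical estimate.

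\emph{Step 4: regularization and passage to the limit.} The weight $\Phi$ is only quasi-plurisubharmonic and $g$ degenerates along the analytic set $X\setminus X_0$. I would regularize — replacing $gg^{*}$ by $gg^{*}+\varepsilon\,\id$ and approximating $\Phi$ by a decreasing sequence of smooth functions — solve the estimate on relatively compact sublevel sets of an exhaustion furnished by weak pseudoconvexity, and extract a weak limit $F$ using the uniform weighted bounds. A standard argument (the $L^{2}$ bound forces removability across the pluripolar degeneracy locus) shows the limit is genuinely $\dbar$-closed, satisfies $g(F)=f$ on all of $X$, and obeys the stated inequality; since the twisted estimate contributes no lossy constant, sharpness survives the limit.
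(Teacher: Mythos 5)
Your outline coincides with the paper's proof in every structural respect: the reduction $F=g^{*}f-u$ with $u$ valued in $\ker g$ and the pointwise orthogonality giving the additivity of the two integrals, the use of a $(C,D,S)$-twisted a priori estimate in the style of Guan--Zhou/Ohsawa--Takegoshi (this is exactly Theorem \ref{a Priori}, obtained from Lemma \ref{OT a priori} by solving the ODE system encoded in Definition \ref{def of gain}), the appeal to Skoda's rank inequality (Lemma \ref{skoda inequality}) with $q=\min(n-k,r_E-r_Q)$, and the final removal of the degeneracy locus by exhaustion and weak limits. So the route is the right one.

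The genuine gap is the step you yourself flag as ``the main obstacle'': the curvature verification is announced but not performed, and it is where all the content of the theorem sits. Concretely, what has to be checked is that on $S=\ker g$ with the twisted metric $\tilde h_S=e^{-q\Phi}h_S$, the hypotheses \eqref{condition 1} and \eqref{condition 2} force
\begin{equation*}
B \;=\; \imaginary\, S(\Phi)\,\Theta_{S,\tilde h_S}+\dd\Phi\otimes Id_S \;\ge_q\; \Tr\,\imaginary\,\beta\wedge\beta^{*}\otimes Id_S ,
\end{equation*}
where $\beta$ is the second fundamental form. This follows from the Gauss--Codazzi decomposition (Lemma \ref{basic curvature}), which gives $\imaginary\Theta_S=\imaginary\Theta_E|_S+\imaginary\beta^{*}\wedge\beta$ and $\imaginary\Theta_{\det Q}\ge r_Q\Gamma+\Tr\imaginary\beta\wedge\beta^{*}$ whenever $\imaginary\Theta_E\ge_q\Gamma\otimes Id_E$; one then takes $\Gamma=\frac{S(\Phi)q+1}{S(\Phi)(qr_Q+1)+r_Q}\,\dd\phi$ (which is exactly what \eqref{condition 2} permits), uses $\dd\Phi=\imaginary\Theta_{\det Q}-\dd\phi$, and observes that the coefficients of $\dd\phi$ cancel identically --- this exact cancellation is the source of the constant-free bound. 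Once $B\ge_q\Tr\imaginary\beta\wedge\beta^{*}\otimes Id_S$ is in hand, Lemma \ref{skoda inequality} gives $\ip{(B\Lambda_\omega)^{-1}\dbar g^{*}f}{\dbar g^{*}f}\le q\abs{g^{*}f}^{2}$ pointwise, so the right-hand side of Theorem \ref{a Priori} becomes exactly $\int_X qD(\Phi)\abs{g^{*}f}^{2}e^{-q\Phi}dV_\omega$. Without this computation the factor $qD(\Phi)$ in the statement is unjustified, so you should carry it out explicitly rather than assert that the hypotheses ``are precisely what yields'' the domination. Your Step 4 (regularizing $gg^{*}+\varepsilon\,\id$) also differs in detail from the paper, which instead works on $X_j=\{\psi<j\}\setminus Z$ with $Z=(\wedge^{r_Q}g)^{-1}(0)$, using that the complement of an analytic set in a weakly pseudoconvex manifold carries a complete K\"ahler metric; either device works, but with your variant you must still check that the curvature inequalities survive the perturbation of the quotient metric, which is an extra verification the paper's route avoids.
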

\begin{dfn}\label{def of gain}
    Let $I \subset \mathbb{R}$ be an open interval, $\cG_I$ be a set of triple of positive smooth functions $(C,D,S)$ on $I$ which satisfies 
    \begin{gather}\label{ode}
    \frac{d}{dt} S(t) < 0, \;\;
    \frac{d}{dt} D(t) = -C(t), \; \;and \;\;
    \frac{d}{dt} \left(S(t) D(t)\right) = -D(t).
    \end{gather}
 For a function $\Phi$ on X, we define $\cG_{\Phi}=\bigcup_{\Phi\left(X\right) \subset I} \cG_I $. And if a triple $(C,D,S) \in \cG_{(-\infty,0)}$ satisfies $$\int_{0}^{1} r D(\log r^2) dr <+\infty \;\;\;and $$ $$\lim_{r\rightarrow 1} D(r) =0,$$ then we say that $(C,D,S)$ satisfies sharp condition.
\end{dfn}
\begin{rem}\label{basic rem1}
    We can write $S(t) = E(t)/D(t)$ for some positive primitive function $E$ of $-D$. Then 
    \begin{equation}
        \dot{S} = (CE -D^2)/D^2 <0
    \end{equation}
    is equivalent to strict concavity of $\log{E}$. Examples of $(C,D,S) \in \cG_I$ and the method to find them is described in subsection \ref{methods for finding}.
\end{rem}
Applying Theorem A to the classical division problem setting, we get the following result.
\begin{thm}\label{optimal division}
    Let $\Omega$ be a psuedoconvex domain in $\mathbb{C}^n$ and $\phi$ be a plurisubharmonic function on $\Omega$. Let  $g = (g_1,\dots,g_r)$ be a non-zero $r$-tuple holomorphic functions. Put $q = \min{(r-1 ,n)}$, $\Phi = \log \abs{g}^2$, and take $(C,D,S) \in \cG_{\Phi}$. Then for any holomorphic function $f$ satisfying 
    \begin{equation}
        \int_{\Omega} (C(\Phi) + qD (\Phi)) \abs{f}^2 \abs{g}^{-2(q+1)} e^{-\phi} d\lambda < +\infty
    \end{equation}
    for the lebesgue measure $d\lambda$, there exists a tuple of holomorphic function $F = (F_1,\dots,F_r)$ satisfying
    \begin{gather}
        \sum_{i=1}^{r} F_{i} g_i = f \;\;\;and\\
        \int_{\Omega} C(\Phi) \abs{F}^2 \abs{g}^{-2q} e^{-\phi} d\lambda \leq \int_{\Omega} (C(\Phi) + qD (\Phi)) \abs{f}^2 \abs{g}^{-2(q+1)} e^{-\phi} d\lambda.
    \end{gather}  
\end{thm}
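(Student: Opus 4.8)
The plan is to obtain the theorem as the special case of Theorem A in which $X=\Omega$ carries the flat Euclidean metric $\omega$ (so $dV_{\omega}=d\lambda$), $E=\Omega\times\C^r$ is the trivial bundle of rank $r_E=r$ with the conformally rescaled metric $h_E=e^{-\phi}\cdot(\text{Euclidean})$, $Q=\Omega\times\C$ is the trivial line bundle with $r_Q=1$ whose smooth metric on $\det Q=Q$ has weight $\phi$, and $g\colon E\to Q$ is the morphism $(v_1,\dots,v_r)\mapsto\sum_i g_iv_i$. Since $g$ is nonzero it is generically surjective, and $k=0$ gives $q=\min(n,r_E-r_Q)=\min(n,r-1)$. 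First I would record that the minimal $h_E$-lift of a section $w$ of $Q$ is $\frac{\bar g}{\abs{g}^2}w$, of squared norm $e^{-\phi}\abs{w}^2/\abs{g}^2$; hence the induced singular metric on $\det Q$ has weight $\tilde\phi=\phi+\log\abs{g}^2$ and $\Phi=\tilde\phi-\phi=\log\abs{g}^2$, matching the hypothesis, so the same triple $(C,D,S)\in\cG_{\Phi}$ is admissible on both sides.

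Next I would verify the curvature hypotheses \eqref{condition 1} and \eqref{condition 2}. A direct computation for the conformally rescaled trivial metric gives $\imaginary\Theta_E=\dd\phi\otimes Id_E$; since $\phi$ is plurisubharmonic this is $\ge_q 0$, so \eqref{condition 1} holds. For \eqref{condition 2}, substituting $r_Q=1$ and $\imaginary\Theta_E=\dd\phi\otimes Id_E$ turns the inequality into $\left(q+1+\frac{1}{S(\Phi)}\right)\dd\phi\otimes Id_E\ge_q\left(q+\frac{1}{S(\Phi)}\right)\dd\phi\otimes Id_E$; the coefficient gap is exactly $1>0$ while $\dd\phi\otimes Id_E\ge_q 0$, so \eqref{condition 2} holds as well.

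Then I would match the two integrands. A $\dbar$-closed $(n,0)$-form is holomorphic, so identifying $f$ and $F$ with the corresponding holomorphic function and $r$-tuple, the relation $g(F)=f$ is exactly $\sum_i g_iF_i=f$. Writing $g^*f$ for the minimal solution $\frac{\bar g}{\abs{g}^2}f$, one computes $\abs{g^*f}^2_{h_E,\omega}=e^{-\phi}\abs{f}^2/\abs{g}^2$, whence $\abs{g^*f}^2_{h_E,\omega}e^{-q\Phi}dV_{\omega}=\abs{f}^2\abs{g}^{-2(q+1)}e^{-\phi}d\lambda$, and likewise $\abs{F}^2_{h_E,\omega}e^{-q\Phi}dV_{\omega}=\abs{F}^2\abs{g}^{-2q}e^{-\phi}d\lambda$ with $\abs{F}^2=\sum_i\abs{F_i}^2$; here the flat metric reduces the pointwise norm of an $(n,0)$-form to the modulus of its coefficient up to a fixed constant that cancels between the two sides. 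With these identifications the finiteness hypothesis and both estimates of Theorem A become precisely those in the statement, and the conclusion follows.

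The main obstacle is that Theorem A requires $h_E$ and the smooth weight $\phi$ to be smooth, whereas here $\phi$ is only plurisubharmonic and the induced weight $\tilde\phi=\phi+\log\abs{g}^2$ is singular along $\{g=0\}$. I would remove this by a standard exhaustion-and-regularization argument: exhaust $\Omega$ by relatively compact pseudoconvex subdomains, replace $\phi$ by a decreasing sequence of smooth plurisubharmonic approximants, apply Theorem A on each subdomain (the singular behaviour of $\Phi=\log\abs{g}^2$ being already permitted by the quasi-plurisubharmonic framework there), and pass to the limit using the uniform $L^2$-bound together with Montel's theorem to extract a holomorphic solution on $\Omega$, with lower semicontinuity of the $L^2$-norms preserving the estimate. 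Ensuring that the limiting solutions do not escape and that the sharp constant is not degraded in this passage is the only genuinely delicate point.
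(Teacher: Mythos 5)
Your proposal is correct and is essentially the paper's own route: the paper derives Theorem \ref{optimal division} by applying Theorem A to the trivial bundles $E=\Omega\times\C^r$, $Q=\Omega\times\C$ with the morphism $\sum g_iv_i$, exactly as you set up, and it handles the non-smooth weight by the same exhaustion-and-regularization scheme that it spells out in the proof of Theorem \ref{division theorem for multiplier ideal sheaf}. Your verification of the induced weight $\tilde\phi=\phi+\log\abs{g}^2$, of conditions (\ref{condition 1})--(\ref{condition 2}) with $r_Q=1$, and of the matching of integrands is accurate.
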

Assume $\log{\abs{g}^2}<0$ , and let $(C,D,S)$ be $(e^{-\epsilon t} , \frac{e^{\epsilon t}-1}{\epsilon},\frac{(e^{-\epsilon t} +\epsilon t - 1)}{\epsilon(e^{-\epsilon t} - 1)} )$. Applying Theorem \ref{optimal division}, we get the following estimate which is strictly stronger than Skoda's original estimate.
\begin{cor}\label{refinement of skoda}
    Let $\Omega$ be a psuedoconvex domain in $\mathbb{C}^n$ and $\phi$ be a plurisubharmonic function on $\Omega$. Let  $g = (g_1,\dots,g_r)$ be non-zero $r$-tuple holomorphic functions such that $\abs{g} <1$. Put $q = \min{(r-1 ,n)}$. Then for any holomorphic function $f$ satisfying 
    \begin{equation}
        \int_{\Omega}\abs{f}^2 \abs{g}^{-2(q+1+\epsilon)} e^{-\phi} d\lambda < +\infty
    \end{equation}
    for lebesgue measure $d\lambda$, there exists a tuple of holomorphic function $F = (F_1,\dots,F_r)$ satisfying
    \begin{gather}
        \sum_{i=1}^{r} F_{i} g_i = f \\
        \int_{\Omega} \abs{F}^2 \abs{g}^{-2(q+\epsilon)} e^{-\phi} d\lambda \leq(1+q/\epsilon) \int_{\Omega} \abs{f}^2 \abs{g}^{-2(q+1+\epsilon)} e^{ -\phi} d\lambda -q/\epsilon \int_{\Omega} \abs{f}^2 \abs{g}^{-2(q+1)} e^{-\phi}. d\lambda
    \end{gather}
\end{cor}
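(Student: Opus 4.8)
The plan is to obtain Corollary \ref{refinement of skoda} as a direct specialization of Theorem \ref{optimal division}, feeding in the explicit triple $(C,D,S)$ recorded immediately above the statement and then algebraically simplifying the two integrals that appear. Since $\abs{g}<1$, the function $\Phi=\log\abs{g}^2$ is negative on $\Omega$, so $\Phi(\Omega)\subset(-\infty,0)$, and by the definition of $\cG_{\Phi}$ it suffices to show that the triple, with $C(t)=e^{-\epsilon t}$ and $D(t)=(e^{-\epsilon t}-1)/\epsilon$, belongs to $\cG_{(-\infty,0)}$; Theorem \ref{optimal division} then applies verbatim with this choice.

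First I would record the positivity of $C$, $D$, $S$ on $(-\infty,0)$, which is immediate from $e^{-\epsilon t}>1$ and $e^{u}>1+u$ for $u>0$, and then verify the three defining relations of Definition \ref{def of gain}. The relation $\dot D=-C$ is clear from the formulas, and writing $E:=SD=(e^{-\epsilon t}+\epsilon t-1)/\epsilon^2$ one checks at once that $\dot E=-D$, which is exactly $\frac{d}{dt}(SD)=-D$. The only substantive point is the monotonicity $\dot S<0$. By Remark \ref{basic rem1} this is equivalent to strict concavity of $\log E$, i.e.\ to $EE''-(E')^2<0$. Using $E'=-D$ and $E''=-\dot D=C$, a short computation collapses this quantity to $\epsilon^{-2}\bigl(e^{-\epsilon t}(\epsilon t+1)-1\bigr)$, and after the substitution $u=-\epsilon t>0$ the claim becomes the elementary inequality $e^{u}(1-u)<1$ for $u>0$, which holds because its left-hand side equals $1$ at $u=0$ and has derivative $-ue^{u}<0$. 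I expect this verification to be the only genuine obstacle; everything else is bookkeeping.

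With $(C,D,S)\in\cG_{\Phi}$ established, I would substitute $\Phi=\log\abs{g}^2$ into Theorem \ref{optimal division}, using $e^{-q\Phi}=\abs{g}^{-2q}$, $C(\Phi)=\abs{g}^{-2\epsilon}$, and the single driving identity
$$C(\Phi)+qD(\Phi)=\Bigl(1+\tfrac{q}{\epsilon}\Bigr)\abs{g}^{-2\epsilon}-\tfrac{q}{\epsilon}.$$
The left-hand integral of Theorem \ref{optimal division} then becomes $\int_{\Omega}\abs{F}^2\abs{g}^{-2(q+\epsilon)}e^{-\phi}\,d\lambda$, while multiplying the identity by $\abs{g}^{-2(q+1)}$ splits the right-hand integral into exactly the two terms appearing in the statement. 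Finally, since $\abs{g}<1$ gives $\abs{g}^{-2(q+1)}\le\abs{g}^{-2(q+1+\epsilon)}$ pointwise and the coefficient $C(\Phi)+qD(\Phi)$ is bounded above by $(1+q/\epsilon)\abs{g}^{-2\epsilon}$, the finiteness hypothesis of Theorem \ref{optimal division} is implied by the assumed finiteness of $\int_{\Omega}\abs{f}^2\abs{g}^{-2(q+1+\epsilon)}e^{-\phi}\,d\lambda$, so the theorem applies and yields the asserted estimate.
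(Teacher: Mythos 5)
Your proposal is correct and follows essentially the same route as the paper: specializing Theorem \ref{optimal division} to $\Phi=\log\abs{g}^2$ with the triple $C(t)=e^{-\epsilon t}$, $D(t)=(e^{-\epsilon t}-1)/\epsilon$ (the paper's displayed $D$ has a sign typo that you correctly fix, consistent with its later example), and verifying membership in $\cG_{(-\infty,0)}$ via the log-concavity of $E=SD$. The paper leaves these verifications implicit; your computation of $CE-D^2=\epsilon^{-2}\bigl(e^{-\epsilon t}(\epsilon t+1)-1\bigr)<0$ and the algebraic identity $C(\Phi)+qD(\Phi)=(1+q/\epsilon)\abs{g}^{-2\epsilon}-q/\epsilon$ supply exactly the missing details.
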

\begin{exam}(sharpness of estimate)\label{sharpness of estimate}
    
    \begin{enumerate}
        \item\label{example1} Assume that $(C,D,S) \in \cG_{(-\infty,0)}$ satisfies sharp condition (e.g Take $(C,D,S)$ as in Corollary \ref{refinement of skoda} for $\delta \in (0,1)$).Put $\Omega$ = $\{(z_1,\dots,z_n) \in \mathbb{C}^n | \sum_{i=1}^{n} a_i ^2 \abs{z_i}^2 < 1\}$ for positive numbers $\{a_i\}_i$. For the tuple of holomorphic functions $g=(a_1 z_1,\dots,a_n z_n)$, $f= \sum_{i=1}^{n} a_i z_i$, a tuple of holomorphic functions $F = (1,\dots,1)$ is the $L^2(\Omega,C(\log\abs{g}^2)\abs{g}^{-2(n-1)})$ minimum holomorphic solution of 
        \begin{equation}\sum_{i=1}^{n} a_i z_i F_i = f.
    \end{equation}
    We denote the H$\ddot{o}$usdorff volume of set $\{\abs{g}^2 = 1\}$
    by $\sigma$. Then we have
        \begin{align}
            &\int_{\Omega} C(\log \abs{g}^2) \abs{F}^2 \abs{g}^{-2(n-1)} d\lambda 
            = n\sigma \int_{0}^{1} r C(\log r^2),\\
            &\int_{\Omega}(C(\log \abs{g}^2) + (n-1)D (\log \abs{g}^2)) \abs{f}^2 \abs{g}^{-2n}d\lambda
            =\sigma \int_{0}^{1} r C(\log r^2) + (n-1)\sigma \int_{0}^{1} r D(\log r^2)
        \end{align}
        and
        \begin{align}
            \int_{0}^{1} r D(\log r^2)
            &=\left[ \frac{r^2}{2} D(\log r^2) \right]_{0} ^{1} + \int_{0}^{1} r C(\log r^2)\\
            &= \int_{0}^{1} r C(\log r^2).
        \end{align}
        Thus the following equality holds:
        \begin{align}
            \min_{\substack{F \in \cO^{\oplus n}(\Omega)\\\sum g_i F_i = f}}\int_{\Omega} C(\log \abs{g}^2) \abs{F}^2 \abs{g}^{-2(n-1)}d\lambda =
                \int_{\Omega} (C(\log \abs{g}^2) + (n-1)D (\log \abs{g}^2)) \abs{f}^2 \abs{g}^{-2n}d\lambda.
        \end{align}
\item\label{example2} We put $(C,D,S) = (e^t, 1-e^t, (1-t-e^t)/(1-e^t)) \in \cG_{(-\infty,0)}$. Let $\mathbb{B} = \{z\in \mathbb{C} : \abs{z} <1\}$ be a open unit ball. For $\theta\in (0,\frac{\pi}{2})$, $g = (\cos\theta,z \sin\theta)$ and $f = 1$, a tuple of holomorphic functions $F = (1/\cos\theta,0)$ is the $L^2(\mathbb{B})$--minimum holomorphic solution of $$g(F) = 1.$$ We have
\begin{align*}
    \int_{\mathbb{B}} \abs{F}^2 d\lambda &= \frac{\pi}{\cos^2\theta}\;\;\;and
\end{align*}
\begin{align*}
    \int_{\mathbb{B}} \frac{\abs{f}^2}{(\cos^2\theta+ \abs{z}^2\sin^2\theta)^2} d\lambda &= 2\pi \int_0^1 \frac{r}{(\cos^2\theta + r^2\sin^2\theta)^2}\\
    &= \frac{\pi}{\cos^2\theta}.
\end{align*}
Thus the following equality holds:
        \begin{align}
            \min_{\substack{F \in \cO^{\oplus 2}(\mathbb{B})\\ \cos \theta F_1 + \sin\theta z F_2 = 1}}\int_{\Omega}  \abs{F}^2 d\lambda =
                \int_{\mathbb{B}} \frac{1}{(\cos^2\theta+ \abs{z}^2\sin^2\theta)^2} d\lambda.
        \end{align}
\end{enumerate}
\end{exam}

Corollary \ref{optimal division} and Example \ref{sharpness of estimate} implies that the plurisubharmonicity of $\phi$ ensures the solvability of division problem with sharp $L^2$-estimate, we say that $\phi$ satisfies the sharp $L^2$-division property. It is well known that plurisubharmonic functions have some other $L^2$-property: (sharp) $L^2$-extension property or $L^2$-estimate property. Interestingly, it has recently been turned out that the converse also holds. In other words,  these $L^2$-properties characterize the plurisubharmonicity(\cite{DNWZ23},\cite{DWZ18}). Our second result is that the sharp $L^2$-division property also characterizes the plurisubharmonicity of $C^2$ class functions.

Fix $(C,D,S) \in \cG_{(-\infty,0)}$ satisfying sharp condition.

\begin{dfn}
Let $\Omega$ be a domain in $\mathbb{C}^n$ $(n\ge 2)$ and $\phi$ be a continuous function. We say that $(\Omega,\phi)$ has the sharp $L^2$-division property if it satisfies following: Assume that holomorphic functions $(f,g_1,\dots g_n) \in\cO^{\oplus n}(\Omega)$ satisfie $\log\abs{g} <0$, $dg_1\wedge \dots \wedge dg_n \ne 0$ everywhere and $$\int_{\Omega} (C(\log \abs{g}^2) + qD(\log\abs{g}^2)) \abs{f}^2 \abs{g}^{-2n} e^{-\phi} d\lambda < +\infty.$$ Then there exists holomorphic functions $(F_1,\dots,F_n) \in \cO^{\oplus n}(\Omega)$ such that $$\sum g_i F_i = f\;\;\;and$$ $$\int_{\Omega} C(\log\abs{g}^2) \abs{F}^2 \abs{g}^{-2(n-1)} e^{-\phi}\le \int_{\Omega} (C(\log \abs{g}^2) + qD(\log\abs{g}^2)) \abs{f}^2 \abs{g}^{-2n} e^{-\phi} d\lambda.$$ 

\end{dfn}
\begin{thmB*}\label{converse}
    Let $\Omega$ be a domain in $\mathbb{C}^n (n\ge2)$ and $\phi \in C^2 (\Omega)$.
\begin{enumerate}
    \item Assume that for any psuedoconvex subdomain $\tilde{\Omega}$, tuples $(\tilde{\Omega},\phi)$ have the sharp $L^2$-division property.
    Then $e^{\phi}$ is plurisubharmonic function i.e $\dd\phi + \imaginary \partial\phi \wedge \dbar \phi$ is positive definite, and it imply that $\dd\phi$ has at most one negative eigenvalue.
    \item Assume that for any psuedoconvex subdomain $\tilde{\Omega}$ and any pluriharmonic function $\psi \in PH(\Omega)$, tuples $(\tilde{\Omega},\phi+\psi)$ has the sharp $L^2$-division property.
    Then $\phi$ is plurisubharmonic.
    \item Assume that for any psuedoconvex subdomain $\tilde{\Omega}$ and positive number $\epsilon$, tuples $(\tilde{\Omega},\epsilon \phi)$ has the sharp $L^2$-division property.
    Then $\phi$ is plurisubharmonic.
\end{enumerate}    
\end{thmB*}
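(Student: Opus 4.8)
The plan is to prove statement (1) from scratch and then obtain (2) and (3) as short consequences. Since plurisubharmonicity is a local, pointwise condition, I would fix an arbitrary point, translate it to $0\in\Omega$, and normalize $\phi(0)=0$; it then suffices to verify the relevant Hermitian inequality at $0$. For (1) I would argue by contraposition: assume $e^{\phi}$ is \emph{not} plurisubharmonic at $0$, so that there is $v\in\C^n$ with $\sum_{i,j}\bigl(\phi_{i\bar j}(0)+\phi_i(0)\phi_{\bar j}(0)\bigr)v_i\bar v_j<0$, and after a unitary change of coordinates take $v=e_1$. The goal is to feed the sharp $L^2$-division property an explicit, calibrated configuration that is forced to violate the estimate.

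The calibration I would use comes from Example \ref{sharpness of estimate}(\ref{example1}): for positive parameters $a_1,\dots,a_n$ set $g=(a_1z_1,\dots,a_nz_n)$, $f=\sum_i a_iz_i$, and work on $\tilde\Omega=\{\,\abs{g}^2<1\,\}$. This $\tilde\Omega$ is convex, hence a pseudoconvex subdomain, and for all $a_i$ large enough it sits inside any prescribed neighbourhood of $0$; moreover $\log\abs{g}^2<0$ on $\tilde\Omega$ and $dg_1\wedge\cdots\wedge dg_n=(\prod_i a_i)\,dz_1\wedge\cdots\wedge dz_n\neq0$, so $(f,g_1,\dots,g_n)$ is an admissible configuration for $(\tilde\Omega,\phi)$. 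When $\phi\equiv0$ Example \ref{sharpness of estimate}(\ref{example1}) gives exact equality in the division estimate with minimizer $F_0=(1,\dots,1)$, so I would treat the weight $e^{-\phi}$ as a perturbation of this equality case and track how the deficit
\[
\Delta:=\int_{\tilde\Omega}\bigl(C+qD\bigr)(\log\abs{g}^2)\,\abs{f}^2\abs{g}^{-2n}e^{-\phi}\,d\lambda-\min_{\sum g_iF_i=f}\int_{\tilde\Omega}C(\log\abs{g}^2)\,\abs{F}^2\abs{g}^{-2(n-1)}e^{-\phi}\,d\lambda
\]
responds, where $q=n-1$. The sharp $L^2$-division property asserts precisely $\Delta\ge0$.

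To extract the sign of $\Delta$ I would degenerate and expand: first let $a_2,\dots,a_n\to\infty$, which concentrates every integral on the slice $z_2=\cdots=z_n=0$, and then let $a_1\to\infty$, which shrinks the remaining $z_1$-disc to $0$. The constant $\phi(0)$ cancels between the two sides. The linear part $2\Re\bigl(\sum_i\phi_i(0)z_i\bigr)$ is pluriharmonic, so the factor $e^{-2\Re(\sum_i\phi_i(0)z_i)}=\bigl|e^{-\sum_i\phi_i(0)z_i}\bigr|^2$ can be absorbed by replacing $(f,F)$ with $\bigl(e^{-\sum_i\phi_i(0)z_i}f,\ e^{-\sum_i\phi_i(0)z_i}F\bigr)$; this leaves the equation and both integrals unchanged but injects the factor $e^{-\sum_i\phi_i(0)z_i}=1-\sum_i\phi_i(0)z_i+\cdots$ into the data. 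Combining this first-order distortion of $f$ with the genuine Hessian term $\sum_{i,j}\phi_{i\bar j}(0)z_i\bar z_j$, the leading nontrivial contribution to $\Delta$ becomes a strictly negative multiple of $\phi_{1\bar1}(0)+\abs{\phi_1(0)}^2$, so $\Delta<0$ for the chosen direction and for $a_i$ large, contradicting $\Delta\ge0$. Hence $e^{\phi}$ is plurisubharmonic. The final assertion is then pure linear algebra: writing $H=(\phi_{i\bar j}(0))$ and $w=(\phi_i(0))$, the relation $H+ww^{\ast}\succeq0$ together with eigenvalue interlacing for the rank-one perturbation $ww^{\ast}$ forces all but the smallest eigenvalue of $H$ to be nonnegative, i.e. $\dd\phi$ has at most one negative eigenvalue. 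The hard part, and the step I would spend the most care on, is the lower bound on $\min_{\sum g_iF_i=f}$: one must show that the weighted-$L^2$ minimal solution stays close to the flat minimizer $(1,\dots,1)$ and that no element of the syzygy class $\{\,\sum_i a_iz_iH_i=0\,\}$ can improve on it beyond second order, so that the sign of $\Delta$ is genuinely dictated by the Hessian term rather than drowned by the $o(\abs{z}^2)$ remainder of $\phi$ or by the deviation of the ellipsoid from the flat model; this is exactly where the exact equality of Example \ref{sharpness of estimate}(\ref{example1}) serves as a calibration and where the degeneration $a_i\to\infty$ is needed to suppress off-diagonal and higher-order terms.

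Finally I would deduce (2) and (3) from (1). For (2), I would apply (1) to $\phi+\psi$ for each pluriharmonic $\psi$: since $\psi_{i\bar j}=0$, it gives $\phi_{i\bar j}+(\phi+\psi)_i(\phi+\psi)_{\bar j}\succeq0$ at every point; fixing a point and choosing $\psi=-2\Re\ell$ for the holomorphic linear $\ell$ with $\ell_i=\phi_i$ there annihilates the gradient term and leaves $\phi_{i\bar j}\succeq0$, so $\phi$ is plurisubharmonic. For (3), I would apply (1) to $\e\phi$ for every $\e>0$: this yields $\e\,\phi_{i\bar j}+\e^2\,\phi_i\phi_{\bar j}\succeq0$, equivalently $\phi_{i\bar j}+\e\,\phi_i\phi_{\bar j}\succeq0$, and letting $\e\to0^{+}$ gives $\phi_{i\bar j}\succeq0$, so again $\phi$ is plurisubharmonic.
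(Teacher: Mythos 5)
Your proposal is correct in outline and reaches the same core mechanism as the paper, but it reorganizes the argument in one substantive place, so a comparison is worth recording. The paper splits part (1) into two lemmas: Lemma \ref{converce lemma 1} treats the case $d\phi(0)=0$ by perturbing the exact equality configuration $g=(z_1,\dots,z_n)$, $f=\sum z_i$ of Example \ref{sharpness of estimate}(\ref{example1}) on an anisotropically scaled ellipsoid, writing any competitor as $F=(1,\dots,1)+G$ with $\sum z_iG_i=0$ and using orthogonality of the syzygy terms against the radial weight to bound the minimum from below --- exactly the ``hard part'' you flag, and your described strategy (calibration by the flat equality case plus syzygy orthogonality, with the degeneration of the $a_i$ suppressing off-diagonal and cubic terms) is the same computation. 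Where you genuinely diverge is the reduction of the case $d\phi(0)\neq 0$: the paper (Lemma \ref{converce lemma 2}) performs a quadratic holomorphic change of coordinates $u_1=w_1-\tfrac{c}{4}w_1^2$ so that the Jacobian of $d\lambda$ absorbs the gradient and the new weight has complex Hessian $\dd\phi(0)+\imaginary\partial\phi(0)\wedge\dbar\phi(0)$, after which Lemma \ref{converce lemma 1} applies verbatim; you instead multiply the holomorphic data by $e^{-\ell}$ and attribute the $\abs{\partial\phi}^2$ contribution to the second-order gain coming from the first-order shift of the weighted minimizer. Both routes are viable and encode the same phenomenon, but note that your version does not reduce literally to the $d\phi=0$ lemma: after the substitution the reference solution is no longer the constant tuple $(1,\dots,1)$, so the orthogonality computation must be redone for the perturbed $\tilde f=e^{-\ell}f$ and you must check that the minimizer's shift contributes exactly $+\imaginary\partial\phi\wedge\dbar\phi$ and no more; this is precisely the content the paper's coordinate change is designed to avoid, and it is the one step you would need to write out in full. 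Your deductions of (2) (kill the gradient with a pluriharmonic/affine $\psi$, then invoke the $d\phi=0$ case) and (3) (scale by $\epsilon$ and let $\epsilon\to 0^+$) are exactly the paper's two closing arguments --- in fact the paper's proof attaches them to the wrong item numbers, and your version assigns them correctly; your rank-one interlacing remark also supplies the linear-algebra step for the final assertion of (1) that the paper states without proof. Finally, be aware that what this contrapositive argument actually yields is positive \emph{semi}definiteness of $\dd\phi+\imaginary\partial\phi\wedge\dbar\phi$ (the theorem's wording ``positive definite'' overstates the conclusion), which is all that (2) and (3) require.
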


In section \ref{A proof of sharp L2 extension theorem}, we explain that the sharp $L^2$-division theorem leads the Guan--Zhou's sharp $L^2$-extension theorem. Here, (\ref{example2}) in Example 1.7 plays a crucial role as an important observation. The idea is essentially based on \cite{A25}. In \cite{A25}, the author gave a new proof of $L^2$-extension theorem by using twisted $L^2$-division theorem. By replacing the twisted $L^2$-division with sharp $L^2$-division theorem, we can get the Guan-Zhou's sharp $L^2$-estimate. We now state the precise form of sharp $L^2$-extension theorem in a simpler special case. 
Let $X$ be an $n$-dimensional stein manifold, and $S$ be a smooth closed submanifold with $\codim S =k \ge 1$. Assume that there exists holomorphic functions $T_i$ $(i = 1,\dots,k)$ such that $$ S = \{T_1 = \dots = T_k = 0\}\;\;\;and$$
$$dT := dT_1 \wedge \dots \wedge dT_k \ne 0\;\;\; on\; S.$$
We also assume that there exists a negative plurisubharmonic function $\Psi$ on $X$ such that $\Psi \in C^{\infty}(X/S)$ and $\Psi = k\log\sum\abs{T_i}^2 + B(x)$ with continuious function $B$ near $S$. For $c(t) \in C^{\infty} ((-\infty,0), \mathbb{R}_{>0})$, we put $C(t) = c(t) e^t$, $D(t) = \int_t ^0 c(u)e^u du$ and $S(t) = \int_t ^{0} D(u)
du /D(t)$. If it suffices that $$(C,D,S) \in \cG_{(-\infty,0)}\;and$$ $$\lim_{t\rightarrow -\infty} D(t) <+\infty,$$ 
we have following Guan-Zhou's sharp $L^2$-extension theorem \cite{GZ15}.
\begin{thmC*}[Theorem 2.2 in \cite{GZ15}]\label{sharp L2 extension}
For any plurisubharmonic function $\phi$ on $X$ and holomorphic section $f \in \Gamma(S,K_X \vert_S)$, satisfying $$ \int_S  \imaginary^{(n-k)^2}\frac{f\wedge \bar{f}}{dT\wedge d\bar{T}} e^{-\phi-B} < +\infty,$$ there exists a holomorphic $n$--form $F$ on $X$ satisfying $F\vert_S = f$ and $$\int_X c(\Psi)  \imaginary^{n^2} F\wedge\bar{F} e^{-\phi} \le \frac{\pi^k}{k!} \left(\int_{-\infty}^0 c(t)e^t dt\right) \int_S  \imaginary^{(n-k)^2}\frac{f\wedge \bar{f}}{dT\wedge d\bar{T}} e^{-\phi-B}.$$
\end{thmC*}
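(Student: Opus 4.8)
The plan is to deduce Theorem C from the sharp $L^2$-division theorem (Theorem A), following the division-to-extension mechanism of \cite{A25} but feeding in the sharp constant in place of the twisted estimate used there. The conceptual seed is Example \ref{sharpness of estimate}(\ref{example2}): for the prototype $X=\B$, $S=\{0\}$, the division problem with generator tuple
\[ g_\theta=(\cos\theta,\ \sin\theta\,z),\qquad \theta\in(0,\tfrac{\pi}{2}), \]
has a minimal solution whose first component, rescaled by $\cos\theta$, is a holomorphic extension of the value of $f$ at $0$ (because the generator $z$ vanishes there), and whose weighted $L^2$-norm is pinned by the sharp division estimate; letting $\cos\theta\to0$ recovers exactly the sharp extension constant $\pi\int_{-\infty}^0 c(t)e^t\,dt$. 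Theorem C is the codimension-$k$ and manifold version of this observation.

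Concretely, I would first use that $X$ is Stein to fix some holomorphic $n$-form $\hat f$ with $\hat f\vert_S=f$ (no estimate needed yet) and, after arranging $\sum\abs{T_i}^2<1$, introduce for each $\theta\in(0,\pi/2)$ a generically surjective morphism $g_\theta$ whose blocks are the defining functions $T_1,\dots,T_k$ weighted by $\sin\theta$ together with one nonvanishing block weighted by $\cos\theta$, over bundles $E,Q$ chosen so that: the induced singular weight is a smooth negative regularization $\Psi_\theta$ of $\Psi$ (reproducing the factor $k\log\sum\abs{T_i}^2$ and the term $B$, with $\Psi_\theta\downarrow\Psi$ as $\theta\uparrow\pi/2$); the smooth metric of $\det Q$ is trivial, so that the variable of $(C,D,S)$ is $\Phi=\tilde\phi=\Psi_\theta$; and $q=\min(n,r_E-r_Q)=k$. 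The given plurisubharmonic weight $e^{-\phi}$, the metric of $K_X$, and the surplus powers of $e^{\Phi}$ are carried inside $h_E$. Applying Theorem A (whose scalar Skoda form is Theorem \ref{optimal division}) to $g_\theta(F)=\hat f$ produces a solution whose distinguished component $G_\theta$ satisfies $G_\theta\vert_S=\hat f\vert_S=f$, and the estimate bounds $\int_X c(\Psi_\theta)\imaginary^{n^2}G_\theta\wedge\bar{G}_\theta\,e^{-\phi}$ by the associated Skoda-type integral $\int_X\bigl(C(\Psi_\theta)+kD(\Psi_\theta)\bigr)\abs{g_\theta^{*}\hat f}^2\,e^{-\phi}$ of $\hat f$.

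The decisive step is the limit $\theta\uparrow\pi/2$ together with the matching of constants. On the left, $\Psi_\theta\downarrow\Psi$ and the $G_\theta$ have uniformly bounded norm, so a normal-families argument extracts a subsequential weak limit $F$ with $F\vert_S=f$, and Fatou's lemma gives $\int_X c(\Psi)\imaginary^{n^2}F\wedge\bar{F}\,e^{-\phi}\le\liminf_{\theta}\int_X c(\Psi_\theta)\imaginary^{n^2}G_\theta\wedge\bar{G}_\theta\,e^{-\phi}$. On the right, as $\cos\theta\to0$ the mass of the Skoda integral concentrates on $S$; writing it in coordinates transverse to $S$ and integrating out the $\C^k$-fibre reduces the transverse factor to a model integral over the round ball $\{\sum\abs{w_i}^2<1\}\subset\C^k$, whose Euclidean volume supplies $\pi^k/k!$, while the tangential factor converges to $\int_S\imaginary^{(n-k)^2}\tfrac{f\wedge\bar f}{dT\wedge d\bar T}e^{-\phi-B}$. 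The remaining one-variable integral is collapsed to $\int_{-\infty}^0 c(t)e^t\,dt$ by the sharp condition of Definition \ref{def of gain}, via precisely the integration-by-parts identity already exploited in Example \ref{sharpness of estimate}(\ref{example1}); this yields the Guan--Zhou constant $\frac{\pi^k}{k!}\int_{-\infty}^0 c(t)e^t\,dt$.

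The genuine obstacle is this last step: justifying the interchange of $\theta\uparrow\pi/2$ with the integrals, proving that the right-hand mass concentrates on $S$ with the correct transverse profile, and confirming that the round-ball model yields exactly $\pi^k/k!$ rather than a nearby constant. The errors coming from the difference between $\Psi$ and its model $k\log\sum\abs{T_i}^2+B$ away from $S$, and from $\hat f$ being only an arbitrary Stein extension, must be absorbed into this concentration estimate, and the correct tracking of the factor $k$ relating $\log\sum\abs{T_i}^2$ to $\Psi$ is what makes the constant sharp rather than merely finite. A secondary, more routine point is to check that the curvature hypotheses \eqref{condition 1} and \eqref{condition 2} hold for $g_\theta$ uniformly in $\theta$; given the plurisubharmonicity of $\phi$ and $\Psi$ and the explicit form of $\Psi_\theta$, these reduce to pointwise inequalities stable under the limit.
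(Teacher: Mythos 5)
Your overall strategy --- deducing Theorem C from the sharp division theorem via a degenerating family of two-block division problems modelled on Example \ref{sharpness of estimate}, fixing a Stein extension $\hat f$ first and letting the parameter degenerate so that the right-hand mass concentrates on $S$ --- is exactly the paper's. But the concrete division data you propose does not work, and this is where the paper does something you are missing. You take as generators the holomorphic functions $(\cos\theta,\sin\theta\,T_1,\dots,\sin\theta\,T_k)$, so that $r_E-r_Q=k$, $q=\min(n,k)=k$, and the induced weight is $\tilde\phi=\log(\cos^2\theta+\sin^2\theta\abs{T}^2)$. This weight reproduces only one power of $\log\abs{T}^2$ near $S$ (not $k\log\abs{T}^2+B$, contrary to what you assert), it cannot see the given $\Psi$ away from $S$ (where $\Psi$ is an arbitrary negative plurisubharmonic function), and with $q=k$ the output weight of Theorem A is $C(\Phi)e^{-k\Phi}=c(\Phi)e^{(1-k)\Phi}$, which for $k\ge 2$ is singular along $S$ and in no case equals $c(\Psi)$. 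The paper avoids all of this by first proving a division theorem for plurisubharmonic weights rather than holomorphic generators (Theorem \ref{division theorem for multiplier ideal sheaf}) and then applying it with exactly two weights $\phi_1=\log\frac{t}{1+t}+\Psi$ and $\phi_2=\log\frac{1}{1+t}$; then $r=2$, $q=1$, the variable of $(C,D,S)$ is $\Psi_t=\log(\frac{t}{1+t}e^{\Psi}+\frac{1}{1+t})$, which carries the full $\Psi$ (hence the factor $k$ and the function $B$) into the estimate, and $C(\Psi_t)e^{-q\Psi_t}=c(\Psi_t)$ gives precisely the weight in the conclusion. The component attached to $\phi_1$ vanishes on $S$ because its norm is integrable against $e^{-\Psi}\sim e^{-B}\abs{T}^{-2k}$, so the other component is the desired extension of $f$.

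Second, the step you yourself flag as ``the genuine obstacle'' --- the interchange of limits and the identification of the constant $\frac{\pi^k}{k!}\int_{-\infty}^0 c(t)e^t\,dt$ --- is not an incidental verification but the analytic core of the argument, and your proposal contains no proof of it. The paper isolates it as Lemma \ref{calculus lemma}: $\lim_{t\to+\infty}\frac{1}{1+t}\int_{\Omega}dV\,\bigl(\frac{1}{1+t}+\frac{t}{1+t}e^{B}\abs{T}^{2k}\bigr)^{-2}=\frac{\pi^{k}}{k!}\int_{\Omega\cap S}\frac{dV\,e^{-B}}{\imaginary^{k^2}dT\wedge d\bar{T}}$, proved by localizing near $S$, comparing $dV$ and $B$ with their restrictions to $S$, and reducing to a one-variable radial integral; the constant $\int_{-\infty}^0 c(t)e^t\,dt=\lim_{s\to-\infty}D(s)$ then enters because $D(\Psi_t)\to D(-\infty)$ along $S$. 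Until you replace the $(k+1)$-generator setup by the plurisubharmonic-weight division with $q=1$ and supply this concentration lemma, the proposal does not establish Theorem C.
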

In \cite{DNWZ23}, it is shown that the sharp $L^2$-extension property induces plurisubharmonicity. Observing the proof of Theorem C allows us to formulate an another $L^2$-division property that characterizes plurisubharmonic functions. We shall provide the details. We denote domain $\{z = (z_1,\dots,z_n) \in \mathbb{C}^n |\sum a_{i\bar{j}}z_i\overline{z_j} < 1\}$ by $E(A)$, where $A = \{a_{i\bar{j}}\}$ is a strict positive definite hermitian form. For some square root $L$ of $A$ (i,e $A = LL^*$), we put $w = (z_1,\dots,z_n)L$ then  $E(A) =\{\sum \abs{w_i}^2 < 1\}$.
\begin{thmD*}
    Let $\Omega$ be a domain in $\mathbb{C}^n$ and $\phi$ be a upper semicontinuious and $e^{-\phi}$ is locally integrable on $\Omega$.
    Assume that any $x \in \Omega$ and $E(A)$, inequality
        $$e^{-\phi(x)} \ge \limsup_{\epsilon \rightarrow +0}\frac{1}{\vol(\epsilon E(A))}\int_{x+ \epsilon E(A)} e^{-\phi}$$
    holds. If for any $x \in \Omega$, positive definite hermitian form $A$ and $\theta \in (0,\frac{\pi}{2})$ satisfying $x + E(A) \subset \Omega$, there exist holomorphic functions $F,G$ on $x+E(A)$ satisfies $F+G =1$ and $$\frac{1}{\cos^2 \theta}\int_{x+E(A)} \abs{G}^2 e^{-\phi} + \frac{1}{\sin^2\theta}\int_{x+E(A)} \abs{F}^2 \abs{w}^{-2n} e^{-\phi} \le \int_{x+E(A)} \frac{1}{(\sin^{2} \theta + \abs{w}^{2n}\cos^2{\theta})^2} e^{-\phi},$$
    then $\phi$ is plurisubharmonic.
\end{thmD*}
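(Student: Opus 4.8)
The plan is to prove that $\dd\phi\ge 0$ at every point, i.e.\ that $\phi$ is plurisubharmonic, by probing the hypothesis on complex ellipsoids that shrink to a point. Fix $x\in\Omega$. Since the admissible domains $x+E(A)$ range over all complex ellipsoids, it suffices to extract, in the limit of small ellipsoids, the nonnegativity of the complex Hessian of $\phi$ at $x$ in the single direction singled out by $A$; letting $A$ vary over all positive definite Hermitian forms then gives $\dd\phi\ge0$ in all directions. Concretely, I would apply the hypothesis with $A$ replaced by $\epsilon^{-2}A$ to obtain the inequality on $x+\epsilon E(A)$, write the normalized coordinate $u=\epsilon^{-1}(z-x)L$ so that $u$ runs over the unit ball $B$, and cancel the Jacobian factor $\epsilon^{2n}$. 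This produces, for each small $\epsilon$, holomorphic $F_\epsilon,G_\epsilon$ on $B$ with $F_\epsilon+G_\epsilon=1$ and
\[
\frac{1}{\cos^2\theta}\int_B\abs{G_\epsilon}^2 e^{-\phi(x+\epsilon uL^{-1})}\,dV+\frac{1}{\sin^2\theta}\int_B\abs{F_\epsilon}^2\abs{u}^{-2n}e^{-\phi(x+\epsilon uL^{-1})}\,dV\le\int_B\frac{e^{-\phi(x+\epsilon uL^{-1})}}{(\sin^2\theta+\abs{u}^{2n}\cos^2\theta)^2}\,dV .
\]
The singular weight $\abs{u}^{-2n}$ forces $F_\epsilon(0)=0$, hence $G_\epsilon(0)=1$. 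The upper mean-value hypothesis, together with upper semicontinuity of $\phi$ and local integrability of $e^{-\phi}$, is exactly what guarantees that $\frac{1}{\vol(\epsilon E(A))}\int_{x+\epsilon E(A)}e^{-\phi}$ converges to $e^{-\phi(x)}$, and thus lets me bound the right-hand side from above by its constant-density value $e^{-\phi(x)}$ times the model integral of Example 1.7(2).

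Next I would pass to the limit on the left. The uniform bound furnished by the inequality gives, after extracting a subsequence, weak limits $F_0,G_0$ of $F_\epsilon,G_\epsilon$ in the relevant weighted Bergman spaces, via the usual normal-families argument for holomorphic functions of bounded weighted $L^2$-norm; these limits solve the model problem $F_0+G_0=1$ with $F_0(0)=0$. Because the model inequality is sharp, with equality for the constant density precisely as in Example 1.7(2), $(F_0,G_0)$ must be the model extremal and the convergence is in fact strong. This rigidity is what pins the limit and makes the comparison quantitative.

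To read off the sign of the Hessian I would, in the $C^2$ heuristic, factor $e^{-\phi(x+\epsilon uL^{-1})}=\abs{e^{-h_\epsilon(u)}}^2\bigl(1-\epsilon^2 Q(u)+o(\epsilon^2)\bigr)$, with $h_\epsilon$ holomorphic absorbing the pluriharmonic part and $Q$ the Hermitian form built from $\dd\phi(x)$ in the frame $L^{-1}$, absorb $e^{-h_\epsilon}$ into $F_\epsilon,G_\epsilon$, and compare the $O(\epsilon^2)$ terms of the two sides about the pinned extremal. The zeroth-order terms cancel by the model equality, the first-order pluriharmonic terms integrate to zero against the rotationally balanced model weights, and the surviving $\epsilon^2$-balance is a nonnegative multiple of $\int_B Q\cdot(\text{positive radial weight})\,dV$, which forces the trace of $Q$ to be nonnegative. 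Varying $A$ (hence $L$), and in particular degenerating it to isolate a single direction, upgrades this to $Q\ge0$ for every direction, i.e.\ $\dd\phi(x)\ge0$; since $x$ is arbitrary, $\phi$ is plurisubharmonic.

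The hard part will be making this limiting extraction rigorous for a merely upper semicontinuous $\phi$, where the Taylor factorization $e^{-\phi}=\abs{e^{-h}}^2(1-\epsilon^2Q+o(\epsilon^2))$ is unavailable and the second-order information must instead be read off from the mean-value hypothesis itself. I expect the cleanest rigorous route to replace the expansion by a direct reduction to the mean-value characterization of plurisubharmonicity in \cite{DNWZ23}: mimicking the proof of Theorem C via the crucial Example 1.7(2), one shows that the sharp $L^2$-division property at $x$ forces the weighted averages of $e^{-\phi}$ to satisfy the Guan--Zhou optimal lower bound, which is precisely the inequality that \cite{DNWZ23} proves to characterize plurisubharmonic functions. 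The two technical points on which everything turns are the uniform control of the extremal solutions $F_\epsilon,G_\epsilon$ as $\epsilon\to0$ and the rigidity of the model equality case needed to pin the limit solution.
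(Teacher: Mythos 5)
Your final paragraph correctly guesses the route the paper actually takes, but the argument you develop in the body is a different one, and it has genuine gaps. The paper's proof does not shrink the ellipsoid at all: it fixes $x+E(A)$, lets the parameter $\theta$ degenerate, and observes that (i) the singular factor $\abs{w}^{-2n}$ forces $F(x)=0$, hence $G(x)=1$; (ii) after multiplying the inequality by the coefficient that tends to $0$, the term containing $G$ survives with weight $1$, the singular term is nonnegative and may be discarded, and the right-hand side --- whose weight concentrates at $w=0$ --- converges, by Lemma \ref{calculus lemma} together with the $\limsup$ mean-value hypothesis, to $\vol(E(A))\,e^{-\phi(x)}$; (iii) a normal-families limit of the $G_\theta$ then yields exactly the sharp $L^2$-extension property on $x+E(A)$, after which the Jensen-inequality argument combined with the mean-value criterion of Lemma \ref{mean value inequality} finishes the proof. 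In your proposal the decisive limit in $\theta$ is never taken: you send $\epsilon\to 0$ at fixed $\theta$, and for fixed $\theta$ the right-hand weight is bounded above and below on the unit ball, so no concentration occurs and the rescaled inequality only records information about averages of $e^{-\phi}$, which the hypothesis already controls one-sidedly.

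The second-order expansion route has two further problems. First, $\phi$ is only assumed upper semicontinuous, so the factorization $e^{-\phi}=\abs{e^{-h}}^2(1-\epsilon^2Q+o(\epsilon^2))$ is unavailable; you acknowledge this but do not repair it. Second, even for $C^2$ weights, the step asserting that ``the surviving $\epsilon^2$-balance is a nonnegative multiple of $\int_B Q\cdot(\text{positive radial weight})$'' is not justified: the hypothesis bounds the \emph{minimum} of the left-hand side over all divisions, so to derive a contradiction from a negative eigenvalue of $\dd\phi(x)$ one needs a \emph{lower} bound for the minimizer of the perturbed problem, not merely an evaluation at the unperturbed extremal of Example \ref{sharpness of estimate}; this is precisely the orthogonality argument carried out in Lemma \ref{converce lemma 1} for Theorem B, and it is the part your sketch omits. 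Likewise, the ``rigidity of the model equality case'' that you invoke to pin the limit solution is asserted rather than proved, and it is in fact unnecessary once one argues as the paper does.
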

\begin{rem}
    In the above theorem, regularity property $$e^{-\phi(x)} \ge \limsup_{\epsilon \rightarrow +0}\frac{1}{\vol(x+\epsilon E(A))}\int_{x+ \epsilon E(A)} e^{-\phi}$$
    is necessary. As a counter example, the function $\phi$ which take a value $1$ at some point $x$, $0$ otherwise has the above sharp $L^2$-division property , but it is not plurisubharmonic.  
\end{rem}
\begin{section}{Positivity of chern curvature forms}
    In this section, we recall some basic results of positivity of chern curvature forms holomorphic vector bundles. Details can be found in \cite{Demailly lec}.  Let $(X,\omega)$ be a n-dimensional $\rm{\kahler}$ manifold. For a holomorphic hermitian line bundle $(L,e^{-\phi})$ on $X$, we say $(L,e^{-\phi})$ is a positive line bundle if its chern curvature $ \imaginary \Theta_L = \dd \phi$ is positive definite. The positivity of curvature is important because it ensures some vanishing theorems based on harmonic integral theory or $L^2$ theory of $\rm{\hormander}$. The proofs of these theorems fundamentally rely on the positive definiteness of twisted operator $[ \imaginary \Theta_L,\Lambda_{\omega}]$ on $\Wedge^{(n,q)} T_X$ for $q >0$, where $\Lambda_{\omega} $ is adjoint of $\omega$. To obtain similar vanishing theorems for higher rank hermite vector bundles $(E,h_E)$, the positivity of the chern curvature $ \imaginary \Theta_{h_E}$ is also required. However, in the case of vector bundles of rank two or higher, there exist essentially different definitions of positivity, and they produces different results. Let us first recall these definitions and review the corresponding results.

    \begin{dfn}\label{m-positivity}
        Let $V$, $W$ be complex vector spaces, and $B$ and $B'$ be hermitian forms on $V \otimes W$.
        \begin{enumerate}
            \item A tensor $u \in V \otimes W$ is said to be of rank m if $u$ can be written 
            \begin{equation}
                u = \sum_{i = 1}^{m} v_i \otimes w_i, \;\; v_i \in V \; w_i \in W.
            \end{equation}
            \item $B$ is said to be $m$-(semi) positive if for any $u \in V \otimes W$ of rank $m$, $B(u,u) > 0$ $(B(u,u) \ge 0)$. In this case we write
            \begin{equation}
                B >_m 0 \;\; (B \ge_m 0).
            \end{equation}
            
            \item If $B-B' >_* 0 \;(B-B'\ge_* 0)$, we write 
            \begin{equation}
                B >_* B' \;\; (B \ge_* B')
            \end{equation}
        \end{enumerate}
        \end{dfn}
        \begin{dfn}
            If for the chern curvature $\imaginary\Theta_{h_E}$, the hermitian form $\imaginary h_E(\Theta_{h_E}*,*)$ is (strictly) $m$-positive on $E\otimes T_X$, we write $\imaginary\Theta_{h_E} \ge_{m} 0\;$ $(\imaginary\Theta_{h_E} >_{m} 0)$. Especially, if $\imaginary\Theta_{h_E}>_{1} 0$, we say $E$ is Griffith positive. If $\imaginary\Theta_{h_E}>_{\min\{r_E,n\}} 0$, then we say $E$ is Nakano positive. 
        \end{dfn}
    For example, the hermitian vector bundle $(T_{\mathbb{P}^n},\omega_{FS})$ is not Nakano positive bundle but Griffith positive bundle for $n\ge 2$, where $\omega_{FS}$ is Fubini-Study metric.
    
    The key point is that if $\imaginary\Theta_{h_E}$ is $m$-positive, then $h_E([\imaginary\Theta,\Lambda_{\omega}]*,*)$ is positive definite on $E\otimes \Wedge^{(n,q)} T_X$ for $$\min(n,r_E)-m+1\le q \le \min(n,r_E).$$ Thus, if $m$-positivity holds for larger $m$, it follows that the vanishing of lower-order cohomology is established. In particular, Nakano positive is an important concept as it leads to the vanishing theorem of the first cohomology and directly guarantees the solvability of complex analysis equations, in a manner similar to the Oka-Cartan theorem.

    
\par
    Next, we recall the fundamental formulas or inequality concering the curvature of subbundles and quotient bundles of a hermitian vector bundle $E$.  Let $(E,h_E)$ be a $r_E$ ranked holomorphic hermitian vector bundle on n-dimensional complex manifold $X$, $Q$ be a $r_Q$ ranked holomorphic vector bundle, and $g\colon E \rightarrow Q$ be a surjective holomorphic morphism. Put $i\colon S \hookrightarrow E$ be a subbundle defined by $S = \ker{g}$, $h_S$ and $h_Q$ be hermitian metric induced by $i$, $g$.\\\\

    Let us recall the following Gauss-Codazzi type direct sum decomposition formula.
    \begin{lem}\label{basic curvature}
        Let $D_E$, $D_S$, $D_Q$, be chern connections and $\Theta_E$, $\Theta_S$, $\Theta_Q$ be chern curvatures. In the $C^{\infty}$ splitting $E \simeq S\oplus Q$, $D_E$ and $\Theta_E$ admits a matrix decomposition 
        \begin{gather}
            D_E = \begin{pmatrix}
                D_S & -\beta^*\\
                \beta & D_Q
            \end{pmatrix}\\       
            \Theta_E =\begin{pmatrix}
                \Theta_S - \beta^* \wedge \beta & A\\
                A^* & \Theta_Q - \beta \wedge \beta^*
            \end{pmatrix}
        \end{gather}
        where $\beta^* = \dbar g^* - g^* \dbar \in C^{\infty} (Hom(Q,S)\otimes \Wedge^{0,1} TX^{\lor})$, $\beta \in C^{\infty} (Hom(S,Q)\otimes \Wedge^{1,0} TX^{\lor})$ is adjoint of $\beta$, and $A^* = \dbar \beta$, $A$ is adjoint of $A^*$.
    \end{lem}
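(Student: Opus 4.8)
The plan is to work entirely within the $C^\infty$ orthogonal splitting determined by $h_E$. Since $h_E$ is smooth and $S=\ker g$ is a holomorphic subbundle, the orthogonal complement $S^\perp$ is a $C^\infty$ subbundle of $E$ and the composition $g|_{S^\perp}\colon S^\perp \to Q$ is a $C^\infty$ isomorphism, so $E \simeq S \oplus S^\perp \simeq S \oplus Q$ as smooth Hermitian bundles. Let $p\colon E \to S$ denote the orthogonal projection and $i\colon S \hookrightarrow E$ the inclusion. I would express every operator as a $2\times 2$ matrix with respect to this decomposition and then match entries, which is exactly the Gauss--Codazzi bookkeeping (cf. \cite{Demailly lec}).

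First I would decompose the Chern connection. Define the compressed operator $D_S := p\,D_E\,i$ and the second fundamental form $\beta := (1-p)\,D_E\,i$, an element of $C^\infty(\Hom(S,S^\perp))$ with values in one-forms. Because $S$ is a holomorphic subbundle, $\bar\partial_E$ preserves smooth sections of $S$, so the $(0,1)$-part of $\beta$ vanishes and $\beta$ is of pure type $(1,0)$; the same observation shows that the $(0,1)$-part of $D_S$ equals $\bar\partial_S$. Metric compatibility of $D_E$ forces its connection matrix to be formally skew-adjoint, and this is precisely what pins the two off-diagonal blocks down to $\beta$ and $-\beta^*$, where $\beta^*$ is the $(0,1)$-form adjoint valued in $\Hom(S^\perp,S)$; it also shows $D_S$ is metric for $h_S$, hence $D_S$ is the Chern connection of $(S,h_S)$. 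Transporting the remaining diagonal block $(1-p)\,D_E\,(1-p)$ to $Q$ via $g$ produces an operator $D_Q$, and once this is identified with the Chern connection of $(Q,h_Q)$ the stated matrix form of $D_E$ is established.

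The curvature formulas then follow from a direct computation of $\Theta_E = D_E^2$ by squaring the connection matrix. The diagonal entries give $D_S^2 - \beta^*\wedge\beta = \Theta_S - \beta^*\wedge\beta$ and $D_Q^2 - \beta\wedge\beta^* = \Theta_Q - \beta\wedge\beta^*$, while the off-diagonal entries come out as $-D\beta^*$ and $D\beta$ for the induced connection $D$ on the relevant $\Hom$-bundle. Since the Chern curvature $\Theta_E$ is of pure type $(1,1)$ and $\beta$ is of type $(1,0)$, only the $(1,1)$-components survive: the $(2,0)$-part $\partial\beta$ and the $(0,2)$-part $\bar\partial\beta^*$ are forced to vanish, and one reads off $A^* = \bar\partial\beta$ together with its adjoint $A = -\partial\beta^*$. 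This gives the claimed decomposition of $\Theta_E$.

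The step I expect to be the main obstacle is verifying that the transported quotient block $(1-p)\,D_E\,(1-p)$ is genuinely the Chern connection of $(Q,h_Q)$. The difficulty is that the identification $S^\perp \cong Q$ is only $C^\infty$ and not holomorphic, so compatibility of this compressed operator with the holomorphic structure $\bar\partial_Q$ of the quotient is not automatic and must be traced carefully through $g$; metric compatibility, by contrast, is immediate from that of $D_E$. Once both diagonal blocks are identified with the respective Chern connections and $\beta$ is shown to be of type $(1,0)$, the remainder is the routine matrix algebra indicated above, and the outcome coincides with the standard Gauss--Codazzi formulas.
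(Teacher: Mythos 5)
Your proposal is the standard Gauss--Codazzi argument, and it is correct; note however that the paper does not prove this lemma at all --- it is stated as a recalled fact with a pointer to Demailly's lecture notes, so there is no in-paper proof to compare against. Two small remarks on your sketch. First, the step you single out as the main obstacle (identifying the transported block $(1-p)D_E(1-p)$ with the Chern connection of $(Q,h_Q)$) is in fact immediate: since $p$ is the orthogonal projection onto $S=\ker g$ we have $g\circ p=0$, hence $g\circ(1-p)=g$, and holomorphy of $g$ gives $g(\bar{\partial}_E t)=\bar{\partial}_Q(g(t))$ for any smooth section $t$ of $S^{\perp}$; so the compressed $(0,1)$-operator transports exactly to $\bar{\partial}_Q$, and metric compatibility is inherited as you say. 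Second, you define $\beta^*$ abstractly as the adjoint of the second fundamental form $\beta=(1-p)D_E i$, whereas the lemma writes the explicit formula $\beta^*=\bar{\partial}g^*-g^*\bar{\partial}$; to match the statement you should observe that $g^*$ is the isometric splitting $Q\xrightarrow{\sim}S^{\perp}\subset E$ (because $h_Q$ is the quotient metric, $gg^*=\mathrm{id}_Q$), so $\bar{\partial}g^*-g^*\bar{\partial}$ takes values in $\Hom(Q,S)\otimes\Wedge^{0,1}T_X^{\lor}$ and a one-line pairing computation using metric compatibility identifies it with $-$(your off-diagonal $(0,1)$-block), i.e.\ with the adjoint of $\beta$ up to the sign already displayed in the connection matrix. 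With these two points filled in, the matrix squaring and the type $(1,1)$ argument for killing $\partial\beta$ and $\bar{\partial}\beta^*$ complete the proof as you describe.
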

    
    The following is an important inequality for compareing the curvature of a subbundle with that of a quotient bundle.
    \begin{lem}[Proposition (11.16) in \cite{Demailly lec}]\label{skoda inequality}
        For integer $0\le q \le \min{(n,r_E - r_Q)}$, inequality
        \begin{gather}
            q\Tr\beta\wedge\beta^* \ge_q -\beta^*\wedge\beta.
        \end{gather}
        holds.
    \end{lem}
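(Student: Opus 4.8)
The plan is to deduce the stated bundle inequality from a pointwise statement on a single fibre $S_x\otimes (T_X)_x$ and to prove that by a Cauchy--Schwarz estimate keyed to the rank constraint. Fixing a point and a local holomorphic frame, I would write the second fundamental form of Lemma \ref{basic curvature} as $\beta=\sum_\lambda\beta_\lambda\,dz_\lambda$ with $\beta_\lambda\in\Hom(S,Q)$, so that $\beta^*=\sum_\mu\beta_\mu^*\,d\bar z_\mu$ with $\beta_\mu^*$ the fibrewise adjoint. Expanding the wedge products gives the $\End(S)$-valued form $-\beta^*\wedge\beta=\sum_{\lambda,\mu}\beta_\mu^*\beta_\lambda\,dz_\lambda\wedge d\bar z_\mu$ and the scalar form $\Tr(\beta\wedge\beta^*)=\sum_{\lambda,\mu}\Tr(\beta_\lambda\beta_\mu^*)\,dz_\lambda\wedge d\bar z_\mu$, the latter read as $\Tr(\beta\wedge\beta^*)\otimes\mathrm{Id}_S$ so that both sides become Hermitian forms on $S\otimes T_X$ in the sense of Definition \ref{m-positivity}. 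The one computation I would record at the outset is the identity
\[
\langle(-\beta^*\wedge\beta)\,u,u\rangle=\Big|\sum_\lambda\beta_\lambda u_\lambda\Big|^2_Q=:|\beta\cdot u|^2,\qquad u=\sum_\lambda u_\lambda\otimes\partial_\lambda\in S\otimes T_X,
\]
obtained by moving $\beta_\mu^*$ across the inner product; in particular the left-hand form is nonnegative and is controlled by the single vector $\beta\cdot u\in Q$.

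Next I would invoke the rank hypothesis. Any $u\in S\otimes T_X$ of rank $\le q$ admits a singular value decomposition $u=\sum_{i=1}^q\sigma_i\,\hat s_i\otimes\xi_i$ with $\{\hat s_i\}$ orthonormal in $S$, $\{\xi_i\}$ orthonormal in $T_X$, and $\sigma_i\ge0$; such a decomposition is available precisely because $q\le\min(n,r_E-r_Q)=\min(\dim T_X,\dim S)$. Setting $\beta_\xi:=\sum_\lambda\xi_\lambda\beta_\lambda\colon S\to Q$, the identity above reads $|\beta\cdot u|^2=\big|\sum_{i=1}^q\sigma_i\,\beta_{\xi_i}\hat s_i\big|^2$, and the elementary bound $|\sum_{i=1}^q a_i|^2\le q\sum_{i=1}^q|a_i|^2$ gives
\[
|\beta\cdot u|^2\le q\sum_{i=1}^q\sigma_i^2\,|\beta_{\xi_i}\hat s_i|^2 .
\]
The constant $q$ here is exactly the number of terms, i.e. the rank of $u$, and this is where the factor $q$ of the statement is born.

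To finish I would estimate each summand by passing from one unit vector to a trace and then reassemble the right-hand side. Since $\hat s_i$ is a unit vector, $|\beta_{\xi_i}\hat s_i|^2\le\|\beta_{\xi_i}\|^2_{\mathrm{op}}\le\Tr(\beta_{\xi_i}\beta_{\xi_i}^*)$, and $\Tr(\beta_{\xi_i}\beta_{\xi_i}^*)=\sum_{\lambda,\mu}\xi_{i,\lambda}\overline{\xi_{i,\mu}}\,\Tr(\beta_\lambda\beta_\mu^*)$ is the value of the scalar form $\Tr(\beta\wedge\beta^*)$ in the direction $\xi_i$. Feeding the same decomposition into the right-hand side and using $\langle u_\lambda,u_\mu\rangle=\sum_i\sigma_i^2\,\xi_{i,\lambda}\overline{\xi_{i,\mu}}$ (orthonormality of the $\hat s_i$) yields
\[
\langle(\Tr(\beta\wedge\beta^*)\otimes\mathrm{Id}_S)\,u,u\rangle=\sum_{i=1}^q\sigma_i^2\,\Tr(\beta_{\xi_i}\beta_{\xi_i}^*),
\]
so combining the last three displays gives $|\beta\cdot u|^2\le q\,\langle(\Tr(\beta\wedge\beta^*)\otimes\mathrm{Id}_S)u,u\rangle$ for every $u$ of rank $\le q$, which is exactly $q\Tr\beta\wedge\beta^*\ge_q-\beta^*\wedge\beta$.

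The genuinely routine parts are the two form computations and the singular value decomposition, which I would state briefly. The step needing the most care, and the conceptual heart of the argument, is the synchronization of the two estimates: the Cauchy--Schwarz inequality and the trace bound must be performed in the \emph{same} orthonormal decomposition $\{\hat s_i,\xi_i\}$, so that the right-hand side reassembles as $\sum_i\sigma_i^2\Tr(\beta_{\xi_i}\beta_{\xi_i}^*)$ with no surviving cross terms. A secondary but essential point is fixing the sign convention that turns an $\End(S)$-valued $(1,1)$-form into its associated Hermitian form (the $\sqrt{-1}$ normalization already used in writing $\sqrt{-1}\,\Theta_E\ge_q0$), since this is what makes $-\beta^*\wedge\beta$ the nonnegative form with value $|\beta\cdot u|^2$ rather than its negative.
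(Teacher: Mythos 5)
Your argument is correct and complete: the identity $\langle(-\beta^*\wedge\beta)u,u\rangle=|\sum_\lambda\beta_\lambda u_\lambda|^2$, the singular value decomposition of a rank-$\le q$ tensor, the Cauchy--Schwarz bound $|\sum_{i=1}^q a_i|^2\le q\sum_i|a_i|^2$, and the operator-norm-by-trace estimate fit together exactly as you describe, with the cross terms in $\langle u_\lambda,u_\mu\rangle$ killed by orthonormality of the $\hat s_i$. The paper itself gives no proof of this lemma (it is quoted as Proposition (11.16) of \cite{Demailly lec}), and your argument is essentially the standard one found there, so there is nothing to reconcile.
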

    We use following inequality to prove Theorem A.
    \begin{prop}
    If $ \imaginary \Theta_E \ge_q \Gamma \otimes Id_E$ for some real $(1,1)$ form $\Gamma$ and $\imaginary\Theta_E \ge_q 0$, then inequality
    \begin{equation}
         \imaginary \Theta_S + q \imaginary \Theta_{\det{Q}} \otimes Id_S \ge_{q} (q r_Q +1)\Gamma \otimes Id_S
    \end{equation}
    holds for $q = min (r_E - r_Q, n-k)$.
    \end{prop}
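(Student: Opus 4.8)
The plan is to reduce everything to the two structural facts already recorded, namely the Gauss--Codazzi decomposition of Lemma~\ref{basic curvature} and Skoda's curvature inequality of Lemma~\ref{skoda inequality}, after transporting the hypothesis on $\imaginary\Theta_E$ separately to the subbundle $S$ and to $\det Q$. Throughout I read all of $\beta\wedge\beta^{*}$, $\beta^{*}\wedge\beta$, $\Theta_{S}$, $\Theta_{\det Q}$ as their $\imaginary(\cdot)$-versions, so that the signs match the standard fact that curvature decreases on subbundles and increases on quotients; I may assume $q\ge 1$, the case $q=0$ being vacuous.

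First I would observe that the hypothesis passes to the compression onto $S$. A tensor $\tau\in S\otimes T_{X}$ of rank $\le q$ is in particular a rank $\le q$ tensor of $E\otimes T_{X}$, and since the off-diagonal blocks $A,A^{*}$ of $\Theta_E$ only pair $S$ with $Q$, one has $\langle\imaginary\Theta_{E}\tau,\tau\rangle=\langle\imaginary(\Theta_E)_{SS}\tau,\tau\rangle$; hence $\imaginary\Theta_{E}\ge_{q}\Gamma\otimes Id_{E}$ gives
\[
\imaginary(\Theta_E)_{SS}\ge_{q}\Gamma\otimes Id_{S}.
\]
Here I should first record the elementary fact that $q$-positivity entails positivity on every tensor of rank $\le q$: given $u$ of rank $m<q$, perturb it to $u+t\,v\otimes w$ with $v,w$ chosen outside the relevant spans so that the rank becomes exactly $q$, and let $t\to0$.

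Next I would extract the two decompositions from Lemma~\ref{basic curvature}: reading the $(S,S)$-block gives $\imaginary\Theta_{S}=\imaginary(\Theta_E)_{SS}+\imaginary\beta^{*}\wedge\beta$, and taking the fibrewise trace over $Q$ of the $(Q,Q)$-block (together with $\imaginary\Theta_{\det Q}=\Tr_{Q}\imaginary\Theta_{Q}$) gives $\imaginary\Theta_{\det Q}=\Tr_{Q}\imaginary(\Theta_E)_{QQ}+\Tr(\imaginary\beta\wedge\beta^{*})$. I then need the lower bound $\Tr_{Q}\imaginary(\Theta_E)_{QQ}\ge r_{Q}\,\Gamma$ as scalar $(1,1)$-forms: for an orthonormal frame $q_{1},\dots,q_{r_Q}$ of $Q\cong S^{\perp}$ and any $\xi\in T_{X}$, each $q_{j}\otimes\xi$ has rank $1\le q$, so $\langle\imaginary\Theta_{E}(\xi,\bar\xi)q_{j},q_{j}\rangle\ge\Gamma(\xi,\bar\xi)$, and summing over $j$ yields the claim.

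Finally I would assemble the pieces. Grouping the second-fundamental-form contributions,
\[
\imaginary\Theta_{S}+q\,\imaginary\Theta_{\det Q}\otimes Id_{S}
=\bigl(\imaginary(\Theta_E)_{SS}+q\,\Tr_{Q}\imaginary(\Theta_E)_{QQ}\otimes Id_{S}\bigr)
+\bigl(\imaginary\beta^{*}\wedge\beta+q\,\Tr(\imaginary\beta\wedge\beta^{*})\otimes Id_{S}\bigr).
\]
By Lemma~\ref{skoda inequality} the second bracket equals $q\,\Tr(\imaginary\beta\wedge\beta^{*})\otimes Id_{S}+\imaginary\beta^{*}\wedge\beta\ge_{q}0$, while the compression bound $\imaginary(\Theta_E)_{SS}\ge_{q}\Gamma\otimes Id_{S}$ and the trace bound $\Tr_{Q}\imaginary(\Theta_E)_{QQ}\ge r_{Q}\,\Gamma$ bound the first bracket below by $\Gamma\otimes Id_{S}+q r_{Q}\Gamma\otimes Id_{S}=(q r_{Q}+1)\Gamma\otimes Id_{S}$; adding the two brackets gives the asserted inequality. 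The step I expect to require the most care is the quotient-trace bound together with the positivity-convention bookkeeping: one must be sure that index-$q$ positivity really controls the rank-one evaluations $q_{j}\otimes\xi$ (so that the factor $r_{Q}$ appears), and that the sign of $\imaginary\beta^{*}\wedge\beta$ in the $(S,S)$-block is exactly the one absorbed by Skoda's inequality.
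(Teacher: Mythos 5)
Your proposal is correct and follows essentially the same route as the paper's own proof: restrict the curvature hypothesis to the $(S,S)$-block via Gauss--Codazzi, bound $\imaginary\Theta_{\det Q}$ below by $r_Q\Gamma+\Tr(\imaginary\beta\wedge\beta^*)$ by tracing the $(Q,Q)$-block against rank-one tensors, and absorb the second-fundamental-form terms with Lemma \ref{skoda inequality}. The only difference is that you spell out the bookkeeping (compression to $S$, the rank-$\le q$ remark, the orthonormal-frame trace computation) that the paper leaves implicit.
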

    \begin{proof}
        Since Lemma $\ref{basic curvature}$, we have
        \begin{align}
            &\imaginary \Theta_S = \imaginary \Theta_E|_S  + \imaginary \beta^* \wedge \beta
            \ge_{q} \Gamma \otimes Id_S +  \imaginary\beta^* \wedge \beta,\\
            &\imaginary\Theta_Q = \imaginary\Theta_E |_Q + \imaginary\beta \wedge \beta^* 
            \ge_1 \Gamma \otimes Id_Q + \imaginary\beta \wedge \beta^* \;\;\; and\\           
            &\imaginary\Theta_{\det{Q}}  = \Tr \imaginary \Theta_Q \ge r_Q\Gamma + tr\imaginary\beta \wedge \beta^*.        
        \end{align}
        Since Lemma \ref{skoda inequality}, we get 
        \begin{equation}
        \imaginary \Theta_S + q\imaginary \Theta_{\det{Q}} \otimes Id_S \ge_{q} (q r_Q +1)\Gamma \otimes Id_S.  
        \end{equation}
    \end{proof}
\end{section}
\begin{section}{Refinement of H\"ormander's estimate}
    The following $\rm{\hormander}$'s $L^2$-existence theorem is powerful tool used extensively throughout complex geometry. 
    \begin{thm}[\cite{Hor65}]\label{Hormander}  Let $X$ be an n-dimensional complete $\kahler$ manifold equipped with a (not necessarily complete) $\kahler$ metric $\omega$ and $\phi$ be a $C^2$ class plurisubharmonic function. Let $(E,h_E)$ be a $r_E$ ranked hermitian vector bundle and $\Theta_E$ is chern curvature, and for integer $1\le k \le n$, we put $q = \min{(r_E ,n-k+1)}$. Assume that $\imaginary \Theta_E \ge_q 0$. Then for any $E$ valued $\dbar$ closed $(n,k)$ form $\beta$ such that
        \begin{equation}
            I = \limsup_{\epsilon \rightarrow +0} \int_{X} \ip{(B_{\epsilon} \Lambda_{\omega})^{-1} \beta}{\beta} e^{-\phi} dV_{\omega} < +\infty
        \end{equation}
    where $B_{\epsilon} = \wedge(\dd \phi + \epsilon \omega)$ and $\Lambda_\omega$ is adjoint of $L_\omega = \wedge \omega$,  there exists $E$ valued $(n,k-1)$ form $u$ such that 
    \begin{gather}
        \dbar u = \beta\\
        \int_{X} \abs{u}_{h_E,\omega}^2 dV_{\omega} \le I.
    \end{gather}
        
    \end{thm}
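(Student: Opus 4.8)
The plan is to follow Hörmander's $L^2$-method: to deduce solvability of $\dbar u=\beta$ with the stated bound from an a priori estimate produced by the Bochner--Kodaira--Nakano (BKN) identity, combined with a Hahn--Banach/duality argument. First I would equip $E$ with the weighted metric $h_E e^{-\phi}$, whose Chern curvature is $\imaginary\Theta_E+\dd\phi\otimes Id_E$, and work in the Hilbert spaces of $L^2$ $E$-valued $(n,\bullet)$-forms for the inner product $\ip{\cdot}{\cdot}_{h_E,\omega}e^{-\phi}$. Since $\omega$ is only assumed incomplete while $X$ carries some complete K\"ahler metric $\hat\omega$, I would first replace $\omega$ by the complete metrics $\omega_\delta=\omega+\delta\hat\omega$, prove the estimate for each $\delta>0$, and recover the statement by letting $\delta\to0$.

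For a fixed complete metric the BKN identity gives, for every smooth compactly supported $E$-valued $(n,k)$-form $\alpha$,
\[
\norm{\dbar\alpha}^2+\norm{\dbar^*\alpha}^2=\norm{\partial_E\alpha}^2+\norm{\partial_E^*\alpha}^2+\int_X\ip{[\imaginary\Theta_E+\dd\phi\otimes Id_E,\Lambda_\omega]\alpha}{\alpha}_{h_E,\omega}e^{-\phi}dV_\omega .
\]
On $(n,k)$-forms the curvature operator simplifies: because wedging with a $(1,1)$-form kills forms of top holomorphic degree, the commutator $[\dd\phi\otimes Id_E,\Lambda_\omega]$ reduces to the scalar operator $B_0\Lambda_\omega\otimes Id_E$ with $B_0=\wedge(\dd\phi)$, and more generally $B_\epsilon\Lambda_\omega=B_0\Lambda_\omega+\epsilon k\,Id$ since $[\omega,\Lambda_\omega]=k\,Id$ there; thus $B_\epsilon\Lambda_\omega\geq\epsilon k>0$ is invertible and $(B_\epsilon\Lambda_\omega)^{-1}$ increases to $(B_0\Lambda_\omega)^{-1}$ as $\epsilon\to0$, so $I=\int_X\ip{(B_0\Lambda_\omega)^{-1}\beta}{\beta}_{h_E,\omega}e^{-\phi}dV_\omega$ by monotone convergence. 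The remaining term $[\imaginary\Theta_E,\Lambda_\omega]$ is $\geq0$ on $E\otimes\Wedge^{n,k}T_X$: this is precisely the positivity furnished by the hypothesis $\imaginary\Theta_E\ge_q0$ with $q=\min(r_E,n-k+1)$, as recorded in Section 2. Discarding the nonnegative $\partial_E$-terms and the nonnegative $\imaginary\Theta_E$-term and restricting to $\alpha\in\ker\dbar$, I obtain the a priori estimate $\int_X\ip{B_0\Lambda_\omega\,\alpha}{\alpha}_{h_E,\omega}e^{-\phi}dV_\omega\leq\norm{\dbar^*\alpha}^2$.

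A Cauchy--Schwarz step with respect to the (possibly degenerate) positive operator $B_0\Lambda_\omega$ then gives, for $\alpha\in\ker\dbar$, $\abs{\ip{\beta}{\alpha}}^2\leq I\cdot\int_X\ip{B_0\Lambda_\omega\,\alpha}{\alpha}_{h_E,\omega}e^{-\phi}dV_\omega\leq I\,\norm{\dbar^*\alpha}^2$, where finiteness of $I$ is exactly what forces $\beta$ to lie pointwise in the range of $B_0\Lambda_\omega$ so that the middle quantity is meaningful. Splitting a general $\alpha\in\mathrm{Dom}(\dbar^*)$ as $\alpha'+\alpha''$ with $\alpha'\in\ker\dbar$ and using $\dbar\beta=0$ to annihilate the contribution of $\alpha''$, the functional $\dbar^*\alpha\mapsto\ip{\alpha}{\beta}$ is well defined and bounded by $I^{1/2}$ on the range of $\dbar^*$; Riesz representation (Hahn--Banach) produces $u$ with $\dbar u=\beta$ weakly and $\norm{u}^2\leq I$. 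The density of compactly supported forms needed to pass from the pointwise BKN computation to all of $\mathrm{Dom}(\dbar)\cap\mathrm{Dom}(\dbar^*)$ is where completeness of $\omega_\delta$ enters (Andreotti--Vesentini). I expect the main obstacle to be carrying out the two limits so as to preserve the \emph{sharp} constant: the passage $\delta\to0$ from the auxiliary complete metrics back to $\omega$, and the rigorous treatment of the degenerate operator $B_0\Lambda_\omega$ (equivalently the $\epsilon\to0$ regularization implicit in the $\limsup$), since neither may be allowed to degrade the exact factor $I$.
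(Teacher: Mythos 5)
The paper contains no proof of this statement: Theorem \ref{Hormander} is quoted as H\"ormander's classical $L^2$-existence theorem (in the degenerate-curvature formulation with $(B_\epsilon\Lambda_\omega)^{-1}$ and a $\limsup$, as in Demailly's exposition), so there is no internal argument to compare yours against. Your sketch follows the standard and correct route: twist $h_E$ by $e^{-\phi}$, run the Bochner--Kodaira--Nakano identity over the complete auxiliary metrics $\omega_\delta=\omega+\delta\hat\omega$, note that on $(n,k)$-forms the commutator with $\dd\phi\otimes Id_E$ collapses to $B_0\Lambda_\omega$ while the hypothesis $\imaginary\Theta_E\ge_q 0$ with $q=\min(r_E,n-k+1)$ yields $[\imaginary\Theta_E,\Lambda_\omega]\ge 0$ precisely in bidegree $(n,k)$ (this is the range of degrees recorded in Section 2 of the paper), and conclude by the generalized Cauchy--Schwarz inequality plus Riesz representation. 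The two issues you defer --- passing $\delta\to 0$ without degrading the constant, and handling the degenerate operator $B_0\Lambda_\omega$ via the $\epsilon$-regularization --- are genuinely the only delicate points; they are resolved by the known monotonicity of $\ip{(B_{\epsilon}\Lambda_{\omega_\delta})^{-1}\beta}{\beta}\,dV_{\omega_\delta}$ in $\delta$ for $(n,\bullet)$-forms and by the fact that the $\limsup$ over $\epsilon$ in the statement is designed exactly to absorb the regularization. So the proposal is a correct outline of the standard proof of the cited result.
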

    Skoda showed Theorem \ref{Skoda division} by using the $\rm{\hormander}$ estimate and the Gauss-Codazzi  type curvature formula. Since we want to show ``sharp'' $L^2$-division theorem, the following is a natural question:
    \begin{question}
        Is the H\"ormander estimate sharp ?(i.e Does the estimate contain non-trivial equality case?)
    \end{question}
    In our observation, when $\phi$ is exhaustion, it may include cases where the non-trivial equality holds (see bellow example \ref{equality hold in Hormander}). But when $\phi$ is bounded above, the equality cannot hold (see bellow (2) in example \ref{remark on a priori estimate}).
    \begin{exam}\label{equality hold in Hormander}
        Let $\Omega$ be an open unit ball $\mathbb{B}$ in $\mathbb{C}$, $\omega$ be a $\kahler$ form  $\imaginary dz \wedge d\bar{z}$, and $\phi = 2\log{(\frac{1}{1-\abs{z}^2})}$, $f = \frac{d\bar{z}\wedge dz}{(1-\abs{z}^2)^2}$. Then the $1$-form $u = \frac{\bar{z}dz}{1-\abs{z}^2}$ is the $L^2 (\mathbb{B},e^{-\phi})$ minimum solution of $\dbar u = f$. Now we have
        \begin{gather}
            \dd\phi = 2\frac{\imaginary dz\wedge d\bar{z}}{(1-\abs{z}^2)^2},\\
            \int_{\mathbb{B}} \abs{u}^2 e^{-\phi} \omega = \int_{\mathbb{B}} \abs{z}^2 \omega = \frac{\pi}{2}\;\;\;and\\
            \int_{\mathbb{B}} \ip{(\dd\phi \Lambda_{\omega})^{-1} f}{f}e^{-\phi} \omega = \int_{\mathbb{B}} \frac{1}{2} \omega  = \frac{\pi}{2}.
        \end{gather}
        Thus we gobtain following equality
        \begin{gather}
            \min_{\dbar u = f} \int_{\mathbb{B}} \abs{u}^2 e^{-\phi} \omega = \int_{\mathbb{B}} \ip{(\dd\phi \Lambda_{\omega})^{-1} f}{f}e^{-\phi} \omega .
        \end{gather}
    \end{exam}
    The following $L^2$-existence theorem allows us to obtain sharp $L^2$-estimate even when $\phi$ is bounded above (see (1) in example \ref{remark on a priori estimate} bellow).
    \begin{thm}\label{a Priori}
        Let $X$ be a n-dimensional complete $\kahler$ manifold equipped with a (not necessarily complete) $\kahler$ metric $\omega$ and let $(E,h_E)$ be a $r_E$ ranked hermitian vector bundle and $\Theta_E$ be a chern curvature. Let $\phi$ be a $C^2$ class function on $X$ and $(C,D,S) \in \cG_{\phi}$. Assume that $B = \imaginary S(\phi) \Theta_E + \dd \phi \otimes Id_E \ge_q 0$ for $k = \min(n-k+1,r_E)$. Then for any $E$ valued $\dbar$ closed $(n,n - k +1)$ form $\beta$ such that
        \begin{equation}
            I = \limsup_{\epsilon \rightarrow +0} \int_{X} D(\phi) \ip{(B_{\epsilon} \Lambda_{\omega})^{-1} \beta}{\beta} dV_{\omega} < +\infty
        \end{equation}
    where $B_{\epsilon} = B + \epsilon \omega \otimes Id_E$ and $\Lambda_\omega$ is adjoint of $L_\omega = \wedge \omega$,  there exists $E$ valued (n,0) form $u$ such that 
    \begin{gather}
        \dbar u = \beta\\
        \int_{X} C(\phi) \abs{u}_E ^2  dV_{\omega} \le I.
    \end{gather}
    Therefore this estimate is sharp (see $\rm{(1)}$ in Example \ref{remark on a priori estimate}).\\
    \end{thm}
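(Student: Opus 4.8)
The plan is to prove the theorem by a \emph{twisted} Bochner--Kodaira--Nakano estimate in the spirit of Ohsawa--Takegoshi and Guan--Zhou, where the twisting data are dictated precisely by the ODE system in Definition \ref{def of gain}. Concretely, I would endow $E$ with the modified metric $h_E\,e^{-\psi}$, $e^{-\psi}=D(\phi)$, and use the twist factor $\tau=S(\phi)$ together with the auxiliary function $A=\frac{D(\phi)-S(\phi)C(\phi)}{C(\phi)}$; note $A>0$ since $\dot S<0$ forces $S C<D$ by Remark \ref{basic rem1}. The starting point is the standard twisted basic inequality: for compactly supported smooth $E$-valued forms $v$ of the degree of $\beta$,
\[
\int_X \ip{[\Xi_\epsilon,\Lambda_\omega]v}{v}\,e^{-\psi}\,dV_\omega \;\le\; \int_X (\tau+A)\,\abs{\dbar^* v}^2\, e^{-\psi}\,dV_\omega + \int_X \tau\,\abs{\dbar v}^2\, e^{-\psi}\,dV_\omega,
\]
where $\Xi_\epsilon=\tau(\imaginary\Theta_E+\dd\psi\otimes Id_E)-\dd\tau\otimes Id_E-A^{-1}\imaginary\partial\tau\wedge\dbar\tau\otimes Id_E+\epsilon\,\omega\otimes Id_E$ and $\dbar^*$ is the formal adjoint for $h_Ee^{-\psi}$.

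The decisive step is to show that these data collapse $\Xi_\epsilon$ to \emph{exactly} $B_\epsilon$. Writing $t=\phi$, one has $\psi'=-D'/D=C/D$, $\dd\tau=S''\,\imaginary\partial\phi\wedge\dbar\phi+S'\,\dd\phi$, $\dd\psi=\psi''\,\imaginary\partial\phi\wedge\dbar\phi+\psi'\,\dd\phi$ and $\imaginary\partial\tau\wedge\dbar\tau=(S')^2\,\imaginary\partial\phi\wedge\dbar\phi$. Collecting terms, the coefficient of $\dd\phi$ in $\Xi_\epsilon$ equals $\tau\psi'-\tau'=1$ (using $\dot D=-C$ and $S'=SC/D-1$), while the coefficient of $\imaginary\partial\phi\wedge\dbar\phi$, namely $S\psi''-S''-A^{-1}(S')^2$, vanishes identically; this vanishing is equivalent to the once-differentiated relation $S''D-2S'C-SC'=C$, i.e. to the ODE $\frac{d}{dt}(SD)=-D$. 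Hence $\Xi_\epsilon=S\imaginary\Theta_E+\dd\phi\otimes Id_E+\epsilon\,\omega\otimes Id_E=B_\epsilon$. The weight bookkeeping is equally exact: $e^{-\psi}=D$ reproduces the factor $D$ on the right-hand side of the conclusion, while $\frac{e^{-\psi}}{\tau+A}=\frac{D}{S+(D-SC)/C}=C$ reproduces the factor $C$ on the left. This is the heart of the argument, and the step I expect to carry essentially all the content.

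With $\Xi_\epsilon=B_\epsilon$, I would invoke the curvature-positivity principle recalled in Section 2: since $B_\epsilon=B+\epsilon\,\omega\otimes Id_E>_q 0$ with $q=\min(n-k+1,r_E)$, the operator $[B_\epsilon,\Lambda_\omega]$ is positive definite on the relevant $E$-valued $(n,\bullet)$-forms, hence invertible. Because $\beta$ is $\dbar$-closed it suffices to test against $v\in\ker\dbar$, which removes the $\tau\abs{\dbar v}^2$ term; then the pointwise Cauchy--Schwarz $\abs{\ip{\beta}{v}}\le\ip{(B_\epsilon\Lambda_\omega)^{-1}\beta}{\beta}^{1/2}\ip{[B_\epsilon,\Lambda_\omega]v}{v}^{1/2}$ combined with the twisted inequality gives
\[
\left|\int_X \ip{\beta}{v}\,e^{-\psi}\,dV_\omega\right|^2 \le \Big(\int_X D\,\ip{(B_\epsilon\Lambda_\omega)^{-1}\beta}{\beta}\,dV_\omega\Big)\Big(\int_X (\tau+A)\abs{\dbar^* v}^2 e^{-\psi}\,dV_\omega\Big).
\]
By Hahn--Banach and Riesz representation this yields $u_\epsilon$ with $\dbar u_\epsilon=\beta$ and $\int_X \frac{e^{-\psi}}{\tau+A}\abs{u_\epsilon}^2 dV_\omega\le\int_X D\,\ip{(B_\epsilon\Lambda_\omega)^{-1}\beta}{\beta}\,dV_\omega$, i.e. $\int_X C(\phi)\abs{u_\epsilon}^2 dV_\omega\le I_\epsilon$.

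Finally I would pass to the limit and settle the analytic subtleties, which form the second obstacle. Since $\omega$ is not assumed complete, the twisted identity and the identification $\dbar^*_{\max}=\dbar^*_c$ must be justified through the completeness of $X$: choose a complete K\"ahler metric $\hat\omega\ge\omega$ and an exhaustion, run the estimate for $\omega+\delta\hat\omega$, and let $\delta\to 0$ using Demailly's standard monotonicity/approximation so that only the clean term $[B_\epsilon,\Lambda_\omega]$ survives. Extracting a weak limit of $\{u_\epsilon\}$ as $\epsilon\to 0$ and using $I=\limsup_\epsilon I_\epsilon<+\infty$ then produces the desired $u$ with $\int_X C(\phi)\abs{u}^2_E\,dV_\omega\le I$. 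The sharpness assertion follows from Example \ref{remark on a priori estimate}(1). The two genuinely nontrivial points are thus the exact ODE cancellation of the gradient term above and the rigorous control of the boundary/error terms in the non-complete setting; everything else is the routine twisted $L^2$ machinery built on Theorem \ref{Hormander}.
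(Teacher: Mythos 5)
Your proposal is correct and follows essentially the same route as the paper: the paper also applies the twisted Ohsawa--Takegoshi a priori estimate (its Lemma \ref{OT a priori}) with exactly your data, namely the metric weight $e^{-T(\phi)}h_E$ with $T=-\log D$ (so $e^{-T}=D$), the twist $\eta=S(\phi)$, and the auxiliary function $\lambda=\dot S^2/(S\ddot T-\ddot S)=(D-SC)/C$, which is your $A$. The only difference is expository: the paper posits unknown $(T,S,\lambda)$ and solves the resulting ODE system to discover Definition \ref{def of gain}, whereas you plug in the answer and verify the cancellations directly.
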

    This $\dbar$-estimate technique appears essentially in \cite{GZ12},\cite{GZ15},\cite{GZ17} (or same O.D.E problem in proof of Theorem \ref{a Priori} appears in \cite{blocki suita}) for proving the sharp $L^2$-extension theorem. We can obtain Theorem \ref{a Priori} by appropriately choosing the tuple $(h_E,\eta,\lambda)$ in following Ohsawa-Takegoshi's a priori estimate, which is also used to prove original Ohsawa-Takegoshi extension theorem.
    \begin{lem}[\cite{OT87}]\label{OT a priori} Let $X$ be an n-dimensional complete $\kahler$ manifold equipped with a (not necessarily complete) $\kahler$ metric $\omega$ and $(E,h_E)$ be a $r_E$ ranked hermitian vector bundle and $\Theta_E$ is chern curvature.  For integer $1\le k \le n$, we put $q = \min{(r_E ,n-k+1)}$. Let $\eta$ be a $C^2$ class positive function and $\lambda$ be a continuous positive function. Assume that $B = \imaginary \eta \Theta_E - (\dd \eta \otimes Id_E +\partial \eta \wedge \dbar \eta/\lambda \otimes Id_E) \ge_{q} 0$. Then for any $E$ valued $\dbar$ closed $(n,k)$ form $\beta$ such that
        \begin{equation}
            I = \limsup_{\epsilon \rightarrow +0} \int_{X} \ip{(B_{\epsilon} \Lambda_{\omega})^{-1} \beta}{\beta} dV_{\omega} < +\infty
        \end{equation}
    where $B_{\epsilon} = B + \epsilon \omega \otimes Id_E$ and $\Lambda_\omega$ is adjoint of $L_\omega = \wedge \omega$,  there exists $E$ valued $(n,k-1)$ form $u$ such that 
    \begin{gather}
        \dbar u = \beta\\
        \int_{X} \frac{\abs{u}_E ^2} {(\eta + \lambda)}   dV_{\omega} \le I.
    \end{gather}
        
    \end{lem}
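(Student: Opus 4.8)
The plan is to prove this as the standard Ohsawa--Takegoshi twisted a priori estimate, by combining a twisted Bochner--Kodaira--Nakano inequality with Hilbert-space duality; since the statement is attributed to \cite{OT87}, the task is really to reproduce that technique in the present vector-bundle formulation.

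First I would reduce existence to a dual estimate. By the usual duality argument (as in \hormander's method), to produce $u$ with $\dbar u = \beta$ and $\int_X \abs{u}_E^2/(\eta+\lambda)\,dV_\omega \le I$ it suffices to establish, for every smooth compactly supported $E$-valued $(n,k)$-form $v$ lying in $\ker\dbar\cap D(\dbar^*)$, the inequality
\[
\abs{\ip{\beta}{v}}^2 \le I \int_X (\eta+\lambda)\,\abs{\dbar^* v}_E^2\, dV_\omega .
\]
Restricting to $\ker\dbar$ is legitimate because $\beta$ is $\dbar$-closed, so the pairing $\ip{\beta}{v}$ depends only on the $\ker\dbar$-component of $v$; once the displayed inequality holds, the Riesz representation theorem produces $w$ with $\norm{w}^2\le I$ and $\ip{\beta}{v}=\ip{\dbar^* v}{\sqrt{\eta+\lambda}\,w}$, whence $u=\sqrt{\eta+\lambda}\,w$ solves $\dbar u = \beta$ with the asserted bound.

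The analytic heart is the twisted identity. Applying the Bochner--Kodaira--Nakano formula to $\sqrt{\eta}\,v$ and completing the square on the first-order term generated by $\partial\eta$ against the weight $\lambda$, one obtains, for $v$ in the relevant domain,
\[
\int_X (\eta+\lambda)\,\abs{\dbar^* v}_E^2\, dV_\omega + \int_X \eta\,\abs{\dbar v}_E^2\, dV_\omega \ \ge\ \int_X \ip{[B,\Lambda_\omega]\,v}{v}\, dV_\omega ,
\]
where $B=\imaginary\eta\Theta_E-\dd\eta\otimes Id_E-\partial\eta\wedge\dbar\eta/\lambda\otimes Id_E$ is exactly the curvature operator in the hypothesis; it is the completion of squares that forces the term $\partial\eta\wedge\dbar\eta/\lambda$ and the weight $\eta+\lambda$ on the $\dbar^*$-term to appear. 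Replacing $B$ by the regularization $B_\epsilon=B+\epsilon\,\omega\otimes Id_E>_q 0$ and using that $v$ has bidegree $(n,k)$ with $q=\min(r_E,n-k+1)$, the hypothesis $B\ge_q 0$ makes $[B_\epsilon,\Lambda_\omega]=B_\epsilon\Lambda_\omega$ positive definite on $E$-valued $(n,k)$-forms. For $v\in\ker\dbar$ the $\dbar v$-term drops, and Cauchy--Schwarz for the positive form $B_\epsilon\Lambda_\omega$ gives
\[
\abs{\ip{\beta}{v}}^2 \le \int_X \ip{(B_\epsilon\Lambda_\omega)^{-1}\beta}{\beta}\, dV_\omega\cdot\int_X\ip{B_\epsilon\Lambda_\omega\,v}{v}\, dV_\omega \le \int_X \ip{(B_\epsilon\Lambda_\omega)^{-1}\beta}{\beta}\, dV_\omega\cdot\int_X (\eta+\lambda)\abs{\dbar^* v}_E^2\, dV_\omega .
\]
Passing to the limit $\epsilon\to+0$ along the $\limsup$ defining $I$ yields the dual inequality above.

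The main obstacle is twofold. The first is purely computational but delicate: carrying out the completion of squares so that precisely $B$ and the weight $\eta+\lambda$ emerge, which is exactly where the twist distinguishes this estimate from the plain \hormander bound of Theorem \ref{Hormander}. The second, and more technical, point is justifying the density needed to extend the a priori inequality from compactly supported smooth forms to all $v\in D(\dbar^*)\cap D(\dbar)$ when $\omega$ is Kähler but possibly incomplete: this is handled by invoking the assumed completeness of $X$ as a manifold, approximating by a family of complete Kähler metrics (the Gaffney/Demailly device), proving the estimate uniformly, and passing to the limit, using that the integrand $\ip{(B_\epsilon\Lambda_\omega)^{-1}\beta}{\beta}\,dV_\omega$ for forms of bidegree $(n,k)$ is stable under this approximation.
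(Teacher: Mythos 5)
The paper gives no proof of this lemma: it is imported verbatim from \cite{OT87} and used as a black box, so there is nothing internal to compare against, and your proposal is the standard twisted Bochner--Kodaira-plus-duality proof of exactly that cited estimate; it is correct in outline, including the reduction to the dual inequality on $\ker\dbar\cap D(\dbar^*)$, the completion of squares producing the weight $\eta+\lambda$ and the term $\imaginary\partial\eta\wedge\dbar\eta/\lambda$, the degree bookkeeping making $B_\epsilon\Lambda_\omega$ positive on $(n,k)$-forms under $\ge_q$-positivity, and the Gaffney/Demailly complete-metric approximation. The only imprecision is the middle Cauchy--Schwarz display: $\int_X\ip{B_\epsilon\Lambda_\omega v}{v}\,dV_\omega$ is not bounded by $\int_X(\eta+\lambda)\abs{\dbar^* v}_E^2\,dV_\omega$ alone but carries the additional term $\epsilon\int_X\ip{[\omega\otimes Id_E,\Lambda_\omega]v}{v}\,dV_\omega$, which must be retained and disposed of in the limit $\epsilon\to+0$ --- this is precisely why the hypothesis is phrased as a $\limsup$ over $\epsilon$ rather than with $B$ itself.
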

\begin{proof}[(proof of Theorem \ref{a Priori})]
    Let $T$ be a $C^2$ class function and $S$ be a $C^2$ class positive function satisfying $S\ddot{T} - \ddot{S} >0$. Put $\eta = S(\phi)$, $\lambda = \dot{S}^2(\Phi) /(S(\Phi)\ddot{T}(\Phi) - \ddot{S}(\Phi))$, replace $h_E$ to $\tilde{h}_E = e^{-T(\phi)} h_E$. Now, the twisted operator $B$ in Lemma \ref{OT a priori} is given by the expression
    \begin{equation}
        \begin{split}
        B &= \imaginary \eta \Theta_{E,\tilde{h}_E} - (\dd \eta \otimes Id_E + \imaginary \partial \eta \wedge \dbar \eta /\lambda \otimes Id_E)\\
        &= S \imaginary \Theta_{E,h_E} + (S\dot{T} - \dot{S}) \dd \phi \otimes Id_E + (S\ddot{T} - \ddot{S} - \dot{S}^2 /\lambda) \partial \phi \wedge \dbar \phi \otimes Id_E\\
        &= S \imaginary \Theta_{E,h_E} + (S\dot{T} - \dot{S}) \dd \phi \otimes Id_E.
    \end{split}
    \end{equation}
    Accordingly, we consider solving the following ordinary differential equation.
    \begin{align}
        &S\dot{T} - \dot{S} = 1 \label{ODE1}  \\
        &e^{-T}/(S + \lambda) = C \label{ODE2} \\
        &S\ddot{T} - \ddot{S} >0. \label{ODE3}
    \end{align}    
     Differentiate both sides of (\ref{ODE1}) respect to $t$, and we get 
     \begin{equation} 
        \label{ODE1'} S\ddot{T} - \ddot{S} = -\dot{S}\dot{T},
    \end{equation}
     and we also get
        \begin{equation}\label{b}
           S + \lambda = S + \frac{\dot{S}^2}{S\ddot{T} - \ddot{S}}
            = S - \frac{\dot{S}}{\dot{T}}
            = \frac{S\dot{T} - \dot{S}}{\dot{T}}
            =\frac{1}{\dot{T}}.
        \end{equation} 
    By substitusing (\ref{b}) into (\ref{ODE2}),  we have 
    \begin{equation}
    \dot{T}e^{-T} = C.
    \end{equation}
    As a solution to this ordinary differential equation, we obtain 
    \begin{equation}
        \label{ODE1 int}T= -\log{D} ,  
    \end{equation}
    where $D$ is a some positive primitive function of $-C$. Furthermore, it follows from (\ref{ODE1}) that
    \begin{equation}
    S = -e^T \int e^{-T},
    \end{equation} 
    this is the  solution of $\frac{d}{dt} (SD) = -D$. Finally, we examine the conditions under which $S$ satisfies (\ref{ODE3}).
    From the monotonic increase of $T$ and (\ref{b}), it follows that (\ref{ODE3})  is equivalent to monotonic decrease of $S$.
\end{proof}
\begin{exam}\label{remark on a priori estimate}
    \begin{enumerate}
    \item We can check sharpness even if $\phi$ is bounded. Let $X = \mathbb{B}$ be a open unit ball in $\mathbb{C}$ equipped with $\kahler$ metric $\omega =\imaginary dz\wedge d\bar{z}$ . Put $\beta_0 = d\bar{z} \wedge dz$ then $L^2 (\mathbb{B},d\lambda)$ bounded solution of $\dbar u = \beta_0$ is denoted as $u = (\bar{z} + f) dz$, where $d\lambda$ is the lebesgue measure and $f$ is a some $L^2 (\mathbb{B},d\lambda)$ bounded holomorphic function. We know that $f$ and $\bar{z}$  are orthogonal. Thus $L^2 (\mathbb{B},d\lambda)$ minimum solution of $\dbar u = \beta_0$ is the $\bar{z} dz$. Take $\phi = \abs{z}^2$ , and  $(C,D,S) \in \cG_{\phi}$ as 
\begin{equation}
    C(t) = 1,\; D(t) = 1-t,\; S = (1-t)/2.
\end{equation}
Then we have
\begin{equation}
    \int_{\mathbb{B}} C(\abs{z}^2) \abs{\bar{z}dz}^2 d\lambda = \int_{\mathbb{B}} \abs{\bar{z}}^2 d\lambda,
\end{equation}
and 
\begin{equation}
    \int_{\mathbb{B}} D(\abs{z}^2) \ip{(\dd \abs{z}^2 \Lambda_{\omega})^{-1} d\bar{z}\wedge dz}{d\bar{z}\wedge dz} d\lambda = \int_{\mathbb{B}} (1-\abs{z}^2) d\lambda.
\end{equation}
From $\int_{\mathbb{B}} d\lambda = \pi$, and $\int_{\mathbb{B}} \abs{z}^2
d\lambda = \frac{\pi}{2}$, we obtain the following
\begin{equation}
    \min_{\dbar u = \beta_0}\int_{\mathbb{B}} C(\abs{z}^2) \abs{u}^2 d\lambda = \int_{\mathbb{B}} D(\abs{z}^2) \ip{(\dd \abs{z}^2 \Lambda_{\omega})^{-1} \beta_0}{\beta_0} d\lambda.
\end{equation}

\item Take $(C,D,S) \in \cG_{\phi}$ as $C(t) = e^{-t}$, $D(t) =e^{-t} -e^{-M}$, $S(t) = (e^{-t} + (M-t -1)e^{-M})/(e^{-t} -e^{-M})$ where $M = \sup\phi$, we have
\begin{gather}
    \int_{\Omega} \abs{u}^2 e^{-\phi} \le \int_{\Omega} \ip{(\dd\phi \Lambda_{\omega})^{-1}f}{f} (e^{-\phi} -e^{-M}) dV_{\omega}.
\end{gather}
This is strictly stronger than Theorem \ref{Hormander}.
\end{enumerate}
\end{exam}
\end{section}

\begin{section}{Sharp $L^2$-estimate of Skoda-type division theorem}
    \subsection{Proof of Theorem A}
    \begin{proof}
        
        The proof basically follows the method of Skoda, but the $L^2$ estimate is replaced by Theorem \ref{a Priori}.

        (Step 1)
        First, we assume that $g$ is surjective everywhere. If we get a solution of $\dbar u = \dbar g^* f$ for some section $u$ on $S = \ker{g}$ with $L^2$ estimate
        \begin{gather}\label{GZ}
            \int_X C(\Phi) \abs{u}^2 e^{-q\Phi} dV_{\omega} \le \int_X qD(\Phi) \abs{g^*f}^2 e^{-q\Phi} dV_{\omega},
        \end{gather}
        then $F = g^* f - u$ is $\dbar$ closed and we get following $L^2$ estimate
        \begin{gather}
            \int_X C(\Phi) \abs{F}^2 e^{-q\Phi} dV_{\omega} \le \int_X (C(\Phi) + qD(\Phi)) \abs{g^*f}^2 e^{-q\Phi} dV_{\omega},
        \end{gather}
        since $S$ and $Q$ are orthogonal.\\
        Let $h_S$ and $h_Q$ are induced metric and $\Theta_{\det{Q}}$ is the chern curvature of $\det{h_Q}$. Put $\tilde{h}_S = e^{-q \Phi} h_S$. By the assumption $\{S(\Phi)(q r_Q +1) + r_Q \}\imaginary \Theta_E - (S(\Phi)q+1) \dd \phi \ge 0$, we have
        \begin{align}
            \imaginary \Theta_{S , \tilde{h}_S} &= \imaginary \Theta_S + q \dd \Phi \\
            &= \imaginary\Theta_S + q \imaginary \Theta_{\det{Q}} - q \dd \phi\\
            &\ge_q \left((qr_Q + 1)\frac{S(\Phi)q+1}{S(\Phi)(q r_Q +1) + r_Q} - q\right)\dd \phi\\
            &= \frac{1}{S(\Phi)(q r_Q +1) + r_Q} \dd\phi.
        \end{align}
        Thus we have
        \begin{align}
           B &= \imaginary S(\Phi)\Theta_{S , \tilde{h}_S} +\dd \Phi\\
           &= \imaginary S(\Phi)\Theta_{S , \tilde{h}_S} +\imaginary\Theta_{\det{Q}} -\dd\phi \\
           &\ge_q \imaginary S(\Phi)\Theta_{S , \tilde{h}_S} + \left( \frac{r_Q(S(\Phi)q + 1)}{S(\Phi)(q r_Q +1) + r_Q}-1\right)\dd\phi + \Tr \imaginary \beta \wedge \beta^* \otimes Id_S\\
           &\ge_q \imaginary S(\Phi)\Theta_{S , \tilde{h}_S} - \frac{S(\Phi)}{S(\Phi)(q r_Q +1) + r_Q}\dd\phi \otimes Id_S + \Tr \imaginary \beta \wedge \beta^* \otimes Id_S\\
           &\ge_q \Tr \imaginary \beta \wedge \beta^* \otimes Id_S.
        \end{align}
        From Lemma \ref{skoda inequality}, we also have
        \begin{align}
            \ip{(B\Lambda_{\omega})^{-1} \beta^* f} {\beta^* f} &\le
            \ip{(-\imaginary\Tr{\beta^* \wedge \beta}\Lambda_{\omega})^{-1} \beta^* f} {\beta^* f}\\
            &\le \ip{(-1/q \imaginary\beta^* \wedge \beta \Lambda_{\omega})^{-1} \beta^* f}{\beta^* f}\\
            &\le q\ip{g^*f}{g^* f}.
        \end{align}
    Since Theorem \ref{a Priori}, we obtain some solution of $\dbar u = \dbar g^* f$ with estimate (\ref{GZ}).\\
    \quad\par
    (Step 2)
    Even when $g$ is not surjective at some point, we can discuss same as (step1) through basic $L^2$ argument. We put $Z$ be the set $\wedge^{r_Q} g ^{-1} (0)$, and $\psi$ be a smooth exhaustion plurisubharmonic function on $X$. Then $X_j = \{\psi < j\} \setminus Z\;\; (j\in\N)$ is complete $\rm{\kahler}$ manifolds (see \cite{Demailly lec}). Thus by conducting the same computation as (Step 1) once again, we get solution $\dbar u_j = \beta^* f$ on $X_j$ with $L^2$-estimate
    \begin{align}
        \int_{X_j} C(\Phi) \abs{u_j}^2 e^{-q\Phi} dV_{\omega} \le \int_{X_j} qD(\Phi) \abs{g^*f}^2 e^{-q\Phi} dV_{\omega} \le \int_{X\setminus Z} qD(\Phi) \abs{g^*f}^2 e^{-q\Phi} dV_{\omega}.
    \end{align}
    Since weakly compactness of $L^2(X) \cap \ker \dbar$, we get global section $\lim_{j \rightarrow +\infty} u_j = u$ on $X\setminus Z$ such that
    \begin{gather}
        \dbar u = \beta^* f\;\;\;and\\
        \int_{X\setminus Z} C(\Phi) \abs{u}^2 e^{-q\Phi} dV_{\omega} \le \int_{X\setminus Z} qD(\Phi) \abs{g^*f}^2 e^{-q\Phi} dV_{\omega}.
    \end{gather} 
    Since $Z$ is a thin analytic set, the $\dbar$-closed form $u$ can be extended beyond $Z$. Thus we can get a $\dbar$-closed solution $F=g^*f - u$ on X with desired $L^2$ estimate.
    This means that proof of Theorem {\rm A} complete.
\end{proof}
\subsection{Examples}

First, the method for finding triples of $(C,D,S) \in \cG$ will be explained.
\begin{lem}\label{methods for finding}
Assume that  $\Phi < A$ for real number $A \in \mathbb{R}$ and $C$ is a positive $C^1$ class function on $(-\infty,A]$. For $\alpha \ge 0$, we put
\begin{gather}
    D(t) = \int_{t}^{A} C(s) ds + \alpha C(A),\\
    E(t) = \int_{t}^{A} D(s) ds + \alpha^2 C(A) \;\; and\\
    S(t) = E(t)/D(t).
\end{gather}
If inequality
\begin{gather}\\\label{suffices}
    \frac{d}{dt} C(t) > -\frac{C(t)D(t)}{E(t)}
\end{gather}
hold, then $(C,D,S) \in \cG_\Phi$ and $S(t) \ge \alpha$ for all $t \in (-\infty,A]$.
\end{lem}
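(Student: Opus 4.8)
The plan is to verify directly the three defining conditions of $\cG_I$ in Definition \ref{def of gain} together with the lower bound $S \ge \alpha$, taking $I = (-\infty, A)$, which contains $\Phi(X)$ since $\Phi < A$. Two of the three ODE conditions are immediate from the integral definitions: differentiating $D(t) = \int_t^A C(s)\,ds + \alpha C(A)$ gives $\dot D = -C$, and since by construction $S(t)D(t) = E(t) = \int_t^A D(s)\,ds + \alpha^2 C(A)$, differentiating gives $\frac{d}{dt}(SD) = \dot E = -D$. Positivity of $C$ is assumed; positivity of $D$ and of $E$ on $(-\infty,A)$ follows because the integrands are positive and $\alpha \ge 0$, whence $S = E/D > 0$. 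So the whole content of the lemma sits in the strict inequality $\dot S < 0$ and in the bound $S \ge \alpha$.

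For the monotonicity of $S$ I would use the formula from Remark \ref{basic rem1}, namely $\dot S = (CE - D^2)/D^2$, which follows at once from $\dot E = -D$ and $\dot D = -C$. Since $D^2 > 0$, it suffices to show that the auxiliary function $\Delta := CE - D^2$ is negative on $(-\infty, A)$. The key observation is its boundary value: at $t = A$ one has $E(A) = \alpha^2 C(A)$ and $D(A) = \alpha C(A)$, so $\Delta(A) = \alpha^2 C(A)^2 - \alpha^2 C(A)^2 = 0$. Differentiating and using $\dot E = -D$, $\dot D = -C$ gives $\dot\Delta = \dot C\, E + C\dot E - 2D\dot D = \dot C\, E + CD$, and since $E > 0$ this is positive precisely when $\dot C > -CD/E$, which is exactly hypothesis (\ref{suffices}). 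Thus $\Delta$ is strictly increasing and vanishes at the right endpoint $A$, so $\Delta(t) < 0$ for every $t < A$; hence $\dot S < 0$, as required.

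Finally, for $S \ge \alpha$ I would argue by monotonicity plus the boundary value of $S$ itself. Having shown $S$ strictly decreasing, it is enough to check $S(t) \to \alpha$ as $t \to A^-$. When $\alpha > 0$ this is immediate from the endpoint values above, $S(A) = E(A)/D(A) = \alpha^2 C(A)/(\alpha C(A)) = \alpha$, and continuity. When $\alpha = 0$ both $E$ and $D$ vanish at $A$, so I would apply L'H\^{o}pital (or the first-order expansions $D(t) \sim C(A)(A-t)$ and $E(t) \sim \tfrac12 C(A)(A-t)^2$), giving $\lim_{t\to A^-} S(t) = 0 = \alpha$. Since $S$ is strictly decreasing with limit $\alpha$ at $A$, we conclude $S(t) > \alpha$ for all $t < A$, and therefore $S(t) \ge \alpha$ on the whole interval.

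The routine part is the two differentiation identities and the positivity statements. The step carrying the real idea — and the only mildly delicate point — is recognizing that the quantity to track is $\Delta = CE - D^2$, exploiting that the normalizations by the $\alpha C(A)$ and $\alpha^2 C(A)$ terms force $\Delta(A) = 0$, and then reading hypothesis (\ref{suffices}) precisely as $\dot\Delta > 0$. Once this boundary-value-plus-monotonicity mechanism is in place, both $\dot S < 0$ and $S \ge \alpha$ follow, with the $\alpha = 0$ endpoint being the only case that needs a small limiting argument.
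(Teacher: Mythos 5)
Your proof is correct and follows essentially the same route as the paper: both reduce $\dot S<0$ to showing $\Delta=CE-D^2<0$ by combining the boundary value $\Delta(A)=0$ with the monotonicity of $\Delta$ forced by hypothesis (\ref{suffices}). In fact your version is the more careful one --- the correct sign is $\dot\Delta=E\dot C+CD>0$ as you have it (the paper's displayed inequality $\frac{d}{dt}(EC-D^2)<0$ is a sign slip), and your limiting argument for $S(t)\to\alpha$ when $\alpha=0$ fills a small gap the paper leaves implicit.
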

\begin{proof}
    It is clearly that $(C,D,S)$ satisfies O.D.E (\ref{ode}) and $S(A) = \alpha$. Thus it suffices to show only $$\frac{d}{dt} S(t) = \frac{E(t)C(t) - D(t)^2}{D(t)^2} < 0.$$
Now we have $\frac{d}{dt} S(A) =0$ and \ref{suffices} implies that $$\frac{d}{dt}( E(t)C(t) - D(t)^2 )<0.$$ Thus we have proved lemma.
\end{proof}

\subsubsection[Psuedoconvex domain case]{Psuedoconvex domain case}\label{Psuedoconvex domain case}
\begin{exam}Let $\Omega$ be a psuedoconvex domain in $\mathbb{C}^n$, and let $\phi$ be a plurisubharmonic function on $\Omega$. Let $g = (g_1,\dots g_r) $ be a $r$ tuple of holomorphic functions on $\Omega$ such that $\abs{g}^2 < 1$, and put $q = \min{(n,r-1)}$.
    \begin{enumerate}  
        \item Take $(C,D,S) \in \cG_{(-\infty,0)}$ as 
            $C(t) = e^{-\delta t}$,
            $D(t) = \frac{(e^{-\delta t} - 1)}{\delta}$ and
            $S(t) = \frac{(e^{-\delta t} +\delta t - 1)}{\delta(e^{-\delta t} - 1)}$
        for $\delta >0$, then we get Corollary \ref{refinement of skoda}. Especially, this $(C,D,S)$ has sharp condition for $0<\delta<1$.
        \item Take $(C,D,S) \in \cG_{(-\infty,0)}$ as $C(t) = 1$, $D(t) = -t$, $S(t) = -t/2$. Then for any holomorphic function $f$ on $\Omega$ such that
            \begin{equation}
                \int_{\Omega} (1-q\log\abs{g}^2)\abs{f}^2 \abs{g}^{-2(q+1)} e^{-\phi} < +\infty,
            \end{equation}
        there exists $r$ tuple of holomorphic functions $F = (F_1,\dots ,F_r) $ such that
        \begin{gather}
            \sum g_i F_i = f\\
            \int_{\Omega} \abs{F}^2 \abs{g}^{-2q} e^{-\phi} \leq \int_{\Omega} (1-q\log\abs{g}^2)\abs{f}^2 \abs{g}^{-2(q+1)} e^{-\phi}.
        \end{gather} 
        \item Take $(C,D,S) \in \cG_{(-\infty,0)}$ as 
            $C(t) = e^{qt}$,
            $D(t) = \frac{(1 -e^{qt})}{q}$ and
            $S(t) = \frac{(e^{qt} -1 - qt)}{q(1 - e^{qt})}$.
        Then for any holomorphic function $f$ on $\Omega$ such that
            \begin{equation}
                \int_{\Omega} \abs{f}^2 \abs{g}^{-2(q+1)} e^{-\phi} < +\infty,
            \end{equation}
        there exists $r$ tuple of holomorphic functions $F = (F_1,\dots ,F_r) $ such that
        \begin{gather}
            \sum g_i F_i = f\\
            \int_{\Omega} \abs{F}^2 e^{-\phi} \leq \int_{\Omega}\abs{f}^2 \abs{g}^{-2(q+1)} e^{-\phi}.
        \end{gather}    
    \end{enumerate}
    
\end{exam}
\subsubsection[Division problem for holomorphic sections on vector bundles]{Division problem for holomorphic sections on vector bundles}
\begin{exam}
    Let $(X,\omega)$ be a n-dimensional compact $\kahler$ manifold and 
     let $Q$ be a $r_q$ ranked holomorphic vector bundle, and $e^{-\phi}$ is a smooth metric of $\det{Q}$ and let $(L,e^{-\varphi})$ be a hermitian line bundle. Let $\sigma_1,\dots \sigma_r$ be a non zero holomorphic section of $Q$ and let $V$ be a complex vector space $\langle \sigma_1 \dots \sigma_r \rangle$, and $E$ be a trivial vector bundle $X\times V$ equipped with hermitian metric $h_E$ such that $\sigma_i$ be orthogonal basis as fibrewise.\\

     Assume natural morphism $g \colon E \rightarrow Q$ is generically surjective, and a
    curvature inequality
    \begin{gather}
        \dd \varphi - (q+\epsilon) \dd \phi \ge 0
    \end{gather}
    holds for $\epsilon > 0$ and $q = \min(n, r - r_Q)$. If $$S \ge 1/\epsilon$$ for $(C,D,S) \in \cG_{\Phi}$,  then {\rm(\ref{condition 1})},{\rm(\ref{condition 2})} in {\rm Theorem A} holds for $g \colon E\otimes L \rightarrow Q\otimes L$. In addition, we can assume that $\Phi < 0$, by replacing $\Phi$ with $\Phi- \sup_X \Phi$. \\
    \begin{enumerate}
        \item Take 
        \begin{gather}
           C(t) = 1,\\
           D(t) = -t + \frac{1}{\epsilon} \;\;\; and\\
           E(t) = \frac{t^2}{2} -\frac{t}{\epsilon} + \frac{1}{\epsilon^2}.
        \end{gather}
        Then $(C,D,S = E/D) \in \cG$ and we have $S\ge \frac{1}{\epsilon}$ from {\rm Lemma \ref{methods for finding}}. 
        Thus for any holomorphic section $f$ of $Q\otimes L \otimes K_X$ such that 
        \begin{equation}
            \int_X \left(1 - q\Phi + \frac{q}{\epsilon}\right) \, \abs{g^*f}_{h_{E \otimes L,\omega}}^2 e^{-\varphi -q\Phi} dV_{\omega}  < \infty,
        \end{equation}
        there exists $r$ tuple of holomorphic section$(F_1 \dots F_r)$ of $L\otimes K_X$ such that
        \begin{gather}
            \sum \sigma_i F_i = f\\
            \sum \int_X F_i \wedge \bar{F_i} \,e^{-q\Phi - \varphi}  \leq \int_X \left(1 - q\Phi + \frac{q}{\epsilon}\right) \, \abs{g^*f}_{h_{E \otimes L,\omega}}^2 e^{-\varphi -q\Phi} dV_{\omega}. 
        \end{gather}
    \item For $0<\delta<\epsilon$, take
    \begin{gather}
        C(t) = e^{-\delta t}\\
        D(t) = \frac{1}{\delta} (e^{-\delta t} -1) + \frac{1}{\epsilon} \;\;\; and\\
        E(t) = \frac{1}{\delta^2} (e^{-\delta t} -1 + \delta t)+ \frac{1}{\epsilon^2 }- \frac{t}{\epsilon}.
    \end{gather}
    We can easily check that $E(t)$ is log concave and $S(0) = \frac{1}{\epsilon}$ so that $(C,D,S) \in \cG_{\Phi}$ and we get $S > \frac{1}{\epsilon}$.
    Thus for any holomorphic section $f$ of $Q\otimes L \otimes K_X$ such that 
        \begin{equation}
            \int_X \left(\left(1+\frac{q}{\delta}\right)e^{-\delta\Phi} +\frac{q}{\epsilon}-\frac{q}{\delta}\right)\abs{g^*f}_{h_{E \otimes L,\omega}}^2 e^{-\varphi -q\Phi} dV_{\omega}< \infty,
        \end{equation}
        there exists $r$ tuple of holomorphic section$(F_1 \dots F_r)$ of $L\otimes K_X$ such that
        \begin{gather}
            \sum \sigma_i F_i = f\\
            \sum \int_X F_i \wedge \bar{F_i} \,e^{-(q+\delta)\Phi - \varphi}  \leq \int_X \left(\left(1+\frac{q}{\delta}\right)e^{-\delta\Phi} +\frac{q}{\epsilon}-\frac{q}{\delta}\right)\abs{g^*f}_{h_{E \otimes L,\omega}}^2 e^{-\varphi -q\Phi} dV_{\omega}. 
        \end{gather}
        Letting $\delta \rightarrow \epsilon$, we get Theorem \ref{Skoda division}. And letting $\delta \rightarrow 0$, we get above example (1).
    \item Take
    \begin{gather}
        C(t) = e^{qt}\\
        D(t)  = \frac{(1-e^{qt})}{q} + \frac{1}{\epsilon}\;\;\; and\\
        E(t) =\int_{t}^{0} D(s) ds + \frac{1}{\epsilon^2}
    \end{gather}
    Then $(C,D,S = E/D) \in \cG$ and we have $S\ge \frac{1}{\epsilon}$ from Lemma \ref{methods for finding}. Thus for any holomorphic section $f$ of $Q\otimes L \otimes K_X$ such that 
    \begin{equation}
        \int_X \abs{g^*f}_{h_{E \otimes L,\omega}}^2 e^{-\varphi -q\Phi} dV_{\omega}  < \infty,
    \end{equation}
    there exists $r$ tuple of holomorphic section$(F_1 \dots F_r)$ of $L\otimes K_X$ such that
    \begin{gather}
        \sum \sigma_i F_i = f\\
        \sum \int_X F_i \wedge \bar{F_i} \,e^{- \varphi}  \leq \left(1+\frac{q}{\epsilon}\right)\int_X \abs{g^*f}_{h_{E \otimes L,\omega}}^2 e^{-\varphi -q\Phi} dV_{\omega}
    \end{gather}
    \end{enumerate}

\end{exam}
\end{section}

\begin{section}{Characterizing plurisubharmonicity via the sharp $L^2$-division property}\label{Characterizing plurisubharmonicity via the sharp L^2-division property}
    \subsection{Correspondences between the solvability of analytic equations and the convexities}
    In several complex variables, studying the relationship between convexities and solvability of analytic equations are fundamental. Kiyoshi Oka developed crucial techniques for patching sections of analytic coherent sheaves in solving the Cousin problem or Levi's problem. Henri Cartan later reformulating these ideas in the modern sheaf theory and cohomology theory: $H^q (\Omega,\cF) =0$ for $q>0$ and analytic coherent sheaf on psuedoconvex domain $\Omega$.
    This implies that there are no obstructions to solve many other holomorphic equations or $\dbar$-equations on psuedoconvex domain. Later, $\rm{\hormander}$ showed the converse by using $L^2$-method and stein theory. Thus it can be seen that the geometric convexity of the domain is equivalent to the solvability of the holomorphic equations.
    In recent years, further developments have revealed that $L^2$ solvability is equivalent to the plurisubharmonicity of functions (\cite{DNWZ23},\cite{DWZ18}).
    We recall the following $L^2$-property appeared in \cite{DNWZ23}.
    \begin{dfn}\label{L2 properties}
        \begin{enumerate}
            Let $\Omega$ be a domain in $\mathbb{C}^n$ and $\phi$ be a upper semicontinuious function.
            \item
            We say that $\phi$ has $L^2$-estimate property if following hold: Assume that for any smooth $\dbar$-closed $(0,1)$-form $\beta$ on $\Omega$ and smooth strictly plurisubharmonic $\psi$,  there exist smooth function $u$ such that $\dbar u = \beta$ and $$\int_{\Omega} \abs{u}^2 e^{-\phi-\psi} \le \int_{\Omega} \Tr_{\dd\psi} (\bar{\beta} \wedge \beta)e^{-\phi-\psi},$$
            if the right hand integral convergents.
            \item We say that $\phi$ has sharp $L^2$-extension property if following hold: For any $z$ in $\Omega$ and holomorphic cylinder $P$ centerd at $z$, there exist a holomorphic function $f \in \cO(P)$ such that $f(z) = 1$ and $$ \int_P \abs{f}^2 e^{-\phi} d\lambda \le \vol{(P)}\, e^{-\phi(z)}. $$
            Here by a holomorphic cylinder we mean a domain of the form $A(P_{r,s}) + z$ for some $A \in U(n)$ and $r,s>0$, with $P_{r,s} = \{(z_1,\dots,z_n) : \abs{z_1} <r, \abs{z_2}^2 + \dots +\abs{z_n}^2 <s^2\}$.
        \end{enumerate}
    \end{dfn}
    We know that plurisubharmonic function has the $L^2$-estimate property and the sharp $L^2$-extension property (\cite{BL14},\cite{blocki suita},\cite{GZ12} and \cite{Hor65}). In \cite{DNWZ23}, they proved that only plurisubharmonic function has $L^2$-estimate property. In other word, the $L^2(\Omega,e^{-\phi})$  solvability and the plurisubharmonicity of $\phi$ is equivalent.
    Theorem B provide a reasonable $L^2$-property characterizing plurisubharmonicity via the division problem.
    \subsection{Proof of Theorem B}
    We denote domain $\{z = (z_1,\dots,z_n) \in \mathbb{C}^n |\sum a_{i\bar{j}}z_i\overline{z_j} < 1\}$ by $E(A)$, where $A = \{a_{i\bar{j}}\}$ is a strict positive definite hermitian form. For some square root $L$ of $A$ (i,e $A = LL^*$), we put $w = (z_1,\dots,z_n)L$ then  $E(A) =\{\sum \abs{w_i}^2 < 1\}$.

   Let $\phi$ be a $C^2$ class function on a domain $\Omega$. Let us first observe the $L^2$-estimate for the division problem around a point $x$ where $d\phi(x) = 0$. In this case, we can show that the estimate of Theorem \ref{refinement of skoda} breaks down if the complex hessian $\dd\phi(x)$ has some negative eigenvalues.
   \begin{lem}\label{converce lemma 1}
    Let $(C,D,S) \in \cG_{(-\infty,0)}$ satisfies sharp condition, and $\phi$ be a $C^2$ class function defined around $0 \in \mathbb{C}^n$. Assume that
        $d\phi(0) =0$ and
        $\dd \phi(0)$ has negative eigenvalue.
    Then there exists sufficient large positive definite hermitian metric $A = \{a_{i\bar{j}}\}$ and holomorphic function $f$ and the tuple of holomorphic functions $g = (g_1,\dots,g_n)$ with $\abs{g}^2 <1$,
    the inequality
    \begin{equation}
        I_{A,\phi} \colon= \inf_{\substack{F=(F_1,\dots,F_n) \in \cO^{\oplus n} (E(A))\\\sum \alpha_i w_i F_i = f}}
        \frac{\int_{E(A)} C(\log\abs{g}^2) \abs{F}^2 \abs{g}^{-2(n-1)} e^{-\phi} d{\lambda}}
        {\int_{E(A)} (C(\log\abs{g}^2) +qD(\log\abs{g}^2))\abs{f}^2 \abs{g}^{-2n} e^{-\phi}d{\lambda}} >1
    \end{equation}
    holds, where $\{\alpha_i\}_i$ are eigenvalues of $A$, and $\{w_i\}_i$ are corresponding unit eigenvectors.
   \end{lem}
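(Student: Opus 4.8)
The plan is to reduce the statement to a first-order perturbation computation around the flat equality configuration of Example \ref{sharpness of estimate}, and to detect the negative eigenvalue of $\dd\phi(0)$ through the sign of the first variation of the quotient $I_{A,\phi}$.

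First I would normalize. Writing the $C^2$ Taylor expansion $\phi(z) = \phi(0) + 2\Re P(z) + L(z) + o(|z|^2)$, with $P$ the holomorphic quadratic part and $L(z) = \sum_{i,j} c_{i\bar j} z_i\bar z_j$ the Levi form, I observe that multiplying $f$ and every $F_i$ by the common non-vanishing holomorphic function $e^{P}$ preserves the constraint $\sum g_i F_i = f$ and multiplies both integrands defining $I_{A,\phi}$ by $|e^{P}|^2$. Hence $I_{A,\phi}$ is unchanged if we replace $e^{-\phi}$ by $e^{-\phi}|e^{P}|^2$; choosing $P$ to cancel the holomorphic Hessian and discarding the additive constant $\phi(0)$, I may assume $\phi = L + o(|z|^2)$, with $L$ diagonalized as $L = \sum_i \mu_i |z_i|^2$, $\mu_1 < 0$, after a unitary change of coordinates. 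I then take $A = t\cdot\diag(\beta_1,\dots,\beta_n)$ in this basis, with shape $\beta=(\beta_i)$ to be fixed and $t\to+\infty$, and set $g_i = \sqrt{t\beta_i}\,z_i$, $f = \sum_i g_i$, $F_0 = (1,\dots,1)$; these satisfy $|g|^2<1$ on $E(A)$, $dg_1\wedge\cdots\wedge dg_n\neq 0$, and $g(F_0)=f$. Rescaling $z = t^{-1/2}\zeta$ sends $E(A)$ to the fixed ellipsoid $\Omega_0 = \{\rho<1\}$, $\rho(\zeta) = \sum_i\beta_i|\zeta_i|^2$, and under it $|g|^2 = \rho$ and $f = \sum_i\sqrt{\beta_i}\,\zeta_i$ become $t$-independent while $\phi$ becomes $\psi = t^{-1}Q + o(t^{-1})$ with $Q = \sum_i\mu_i|\zeta_i|^2$.

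Next I expand $I_{A,\phi}$ to first order in $t^{-1}$. The denominator $N$ is explicit and expands via $e^{-\psi}=1-t^{-1}Q+o(t^{-1})$. For the numerator $M = \inf_{g(F)=f}\int_{\Omega_0}C(\log\rho)|F|^2\rho^{-(n-1)}e^{-\psi}$, the crucial point is that $F_0$ is the \emph{flat} minimizer (Example \ref{sharpness of estimate}), i.e. $F_0$ is orthogonal to $\ker g$ in the flat inner product. The minimal-norm solution for the weight $e^{-\psi}$ is $F_0$ minus its $\psi$-orthogonal projection onto $\ker g$; since this projection vanishes at $\psi=0$ and depends Lipschitz-continuously on the (uniformly comparable) weight, its contribution to $M$ is $O(t^{-2})$. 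Thus $M = \int_{\Omega_0}C(\log\rho)|F_0|^2\rho^{-(n-1)}e^{-\psi} + O(t^{-2})$, and with $q=n-1$ and the flat equality $\int_{\Omega_0}W = 0$ for $W := (C+qD)(\log\rho)|f|^2\rho^{-n} - nC(\log\rho)\rho^{-(n-1)}$ (which reproduces Example \ref{sharpness of estimate}), I obtain $M - N = t^{-1}\int_{\Omega_0}W\,Q\,d\lambda + O(t^{-2})$.

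It then remains to choose $\beta$ so that $\Delta(\beta):=\int_{\Omega_0}W\,Q\,d\lambda>0$, for then $I_{A,\phi}=M/N>1$ once $t$ is large. Here I would exploit that $\Omega_0$, $d\lambda$ and $Q$ are invariant under independent phase rotations of the $\zeta_i$, whereas $|f|^2 = \rho + \sum_{i\neq j}\sqrt{\beta_i\beta_j}\,\zeta_i\bar\zeta_j$: the cross terms integrate to zero, so $|f|^2$ may be replaced by $\rho$ in $\Delta$, and $W$ collapses to $(n-1)\rho^{-(n-1)}(D-C)(\log\rho)$. The radial substitution $u_i=\sqrt{\beta_i}\zeta_i$ gives $\Delta(\beta) = \frac{n-1}{n\prod_i\beta_i}\,P\,\sum_k\frac{\mu_k}{\beta_k}$ with $P = \omega_{2n-1}\int_0^1 r^3(D-C)(\log r^2)\,dr$; substituting $C=-\dot D$, integrating by parts and using the sharp condition ($D(0)=0$ and decay of $e^{2t}D$) yields $P = -\tfrac{\omega_{2n-1}}{2}\int_{-\infty}^0 e^{2t}D\,dt<0$ since $D>0$. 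As $\mu_1<0$, taking $\beta_1$ small enough (elongating $\Omega_0$ along the negative eigendirection) while keeping the other $\beta_k$ fixed makes $\sum_k\mu_k/\beta_k<0$, whence $\Delta(\beta)>0$. The main obstacle is the rigorous first-order expansion of the infimum $M$: one must show that replacing the true minimizer by the fixed feasible $F_0$ costs only $O(t^{-2})$, which rests on the flat-optimality of $F_0$ and on uniform comparability of the weights as $t\to\infty$, together with controlling the $o(|z|^2)$ Taylor remainder uniformly on the shrinking domain; the sign computation itself is clean once the phase-symmetry cancellation reduces $W$ to the one-variable integral governed by the sharp-condition identity.
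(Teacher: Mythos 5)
Your proposal is correct and follows essentially the same route as the paper's proof: rescale to a small ellipsoid elongated along the negative eigendirection of the Levi form, perturb around the flat equality configuration of Example \ref{sharpness of estimate}, control the first variation of the infimum by the orthogonality of $(1,\dots,1)$ to $\ker g$, and read off the sign from $\int_0^1 r^3(D-C)(\log r^2)\,dr=-\tfrac12\int_0^1 r^3C(\log r^2)\,dr<0$, which is the paper's $M_2>M_3$ computation. Your $e^{P}$ normalization killing the holomorphic (pluriharmonic) part of the Hessian is in fact a welcome refinement, since the paper's expansion $\phi(w)=\sum a_i\abs{w_i}^2+O(\abs{w}^3)$ silently discards that term.
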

   \begin{proof}
    We can assume  that $\phi(0) = 0$. And we can choice coordinate system ${(w_1,\dots ,w_n)}$ by unitary transformation such that 
    \begin{equation}
        \phi(w) = \sum a_i \abs{w_i}^2 + O(\abs{w}^3)
    \end{equation}
    where $a_i$ are real number and $a_1<0$. Put $\phi_2 = \sum a_i \abs{w_i}^2$. Take sufficiently small positive number $\delta$, we change coordinate $(\tilde{w_1},\dots,\tilde{w_n}) = (\delta w_1,\dots,w_n)$ and we get
    \begin{equation}
        \int_{\abs{\tilde{w}}^2 = 1} \phi_2  d\mu < 0.
    \end{equation}
    where $d\mu$ is the H\"oussedolf measure of set ${\abs{\tilde{w}}^2 = 1}$.
    Put $z = (z_1,\dots,z_n) = \epsilon\tilde{w}$, and $A_\epsilon$ be hermitian metric such that $z$ be orthogonal coordinate system.
    Then we can denote
    \begin{equation}
        e^{-\phi} = 1 -\epsilon^2 \tilde{\phi}_2 (z) + O(\epsilon^3)
    \end{equation}
    where $\tilde{\phi}_2(z)$ is hermitian 2-form such that
    \begin{equation}
        W = \int_{\partial E(A_1)} \tilde{\phi}_2(z) d\mu < 0.
    \end{equation}
    For $f = \sum z_i$ and $g=(z_1,\dots,z_n)$, let $F = (F_i) \in \cO^{\oplus n} (E_0(A_{\epsilon}))$ be arbitrary holomorphic solution of $\sum z_i F_i = \sum z_i$. Then we can denote 
    \begin{gather}
    F = (1,1,\dots,1) + G\;\;\; and\\
    \abs{F}^2 = n + {\abs{G}^2} + \sum \Re\,{G_i} ,
    \end{gather} 
    where $G_i$ are vanish at origin at least order 1, and $\sum z_i G_i = 0$.
    \\ Since $\Re\,{{G_i}}$ and $1 -\epsilon^2 \tilde{\phi}_2 (z)$ are orthogonal in $L^2 (E(A_{\epsilon}),C(\log\abs{g}^2)\abs{g}^{-2(n-1)})$. Then we get
    \begin{equation}
        (\vol(E(A_{\epsilon})))^{-1}\int_{E(A_{\epsilon})}C(\log\abs{g}^2)\abs{g}^{-2(n-1)} e^{-\phi} ({\abs{G}^2} + \sum \Re{G_i}) d\lambda \ge - O(\epsilon^3).
    \end{equation}
    Put $\sigma$ be the volume of surface of $2n$-dim unit ball, we get
    \begin{gather}
        (\vol(E(A_{\epsilon})))^{-1}\int_{E(A_{\epsilon})}C(\log\abs{g}^2)\abs{g}^{-2(n-1)} e^{-\phi} {\abs{F}^2} d\lambda \\
        \ge \int_{0}^{1} n(\sigma - \epsilon^2 r^2 W)\,r\,C(\log{r^2})dr -O(\epsilon^3)
    \end{gather}
    and
    \begin{gather}
        (\vol (E(A_{\epsilon})))^{-1}\int_{E(A_{\epsilon})}(C(\log\abs{g}^2)+(n-1)D(\log{\abs{g}^2}))\abs{g}^{-2n} e^{-\phi} {\abs{f}^2} d\lambda \\
        \le \int_{0}^{1} (\sigma - \epsilon^2 r^2 W)\,r\,(C(\log{r^2})+(n-1)D(\log{r^2}))dr +O(\epsilon^3).
    \end{gather}
    Since
    \begin{equation}
        \int_{0}^{1} rC(\log{r^2}) dr = \int_{0}^{1} rD(\log{r^2})
    \end{equation}
    and
    \begin{gather}
       \int_{0}^{1} r^3 D(\log{r^2}) dr
       =\left[ \frac{r^4}{4} D(\log{r^2})\right]_{0} ^{1} + \frac{1}{2}\int_{0}^{1} r^3C(\log{r^2}) dr \\
       = \frac{1}{2}\int_{0}^{1} r^3C(\log{r^2}) dr.
    \end{gather}
    We can conclude $I_{A_\epsilon,\phi}  =\frac{nM_1\sigma -M_2\epsilon^2 W - O(\epsilon^3)}{nM_1\sigma -M_3\epsilon^2 W + O(\epsilon^3)}>1$ for sufficiently small $\epsilon >0$, where $ M_1 = \int_{0}^{1}rC(\log{r^2})dr$ , $ M_2 = n\int_{0}^{1}r^3C(\log{r^2})dr$ , $ M_3 = \frac{n+1}{2}\int_{0}^{1}r^3C(\log{r^2})dr$.
   \end{proof}
   Next, we consider the case where $d\phi(x) \ne 0$. In this case, we can also show that the sharp estimate breaks down if the $\dd e^{\phi}(x)$ has some negative eigenvalues.
   \begin{lem}\label{converce lemma 2}
        Let $(C,D,S) \in \cG_{(-\infty,0)}$ satisfies sharp condition, and let $\phi$ be a $C^2$ class function defined around $0 \in \mathbb{C}^n$ $(n \ge 2)$. Assume that $\dd \phi(0) + \imaginary\partial\phi\wedge\dbar \phi(0)$ has negative eigenvalue. Then there exists psuedoconvex domain $0\in\tilde{\Omega}$, and non zero holomorphic function $f$ and $n$ tuple of holomorphic function $g= (g_1,\dots,g_n)$ such that $dg_1\wedge\dots \wedge g_n \ne 0$ and $\abs{g}^2 <1$ and
        \begin{equation}\label{converse ie 2}
            \inf_{\substack{F=(F_1,\dots,F_n) \in \cO^{\oplus n} (\tilde{\Omega})\\\sum g_i F_i = f}}
        \frac{\int_{\tilde{\Omega}} C(\log\abs{g}^2) \abs{F}^2 \abs{g}^{-2(n-1)} e^{-\phi} d{\lambda}}
        {\int_{\tilde{\Omega}} (C(\log\abs{g}^2) +(n-1)D(\log\abs{g}^2))\abs{f}^2 \abs{g}^{-2n} e^{-\phi}d{\lambda}} >1.
        \end{equation}
   \end{lem}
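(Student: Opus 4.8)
The plan is to deduce this lemma from the already-proved Lemma \ref{converce lemma 1} (the $d\phi(0)=0$ case) by stripping off the holomorphic part of the $2$-jet of $\phi$ at $0$. The device is an invariance of the quotient in (\ref{converse ie 2}): for any holomorphic $h$ on a bounded domain $\tilde\Omega$, the substitution $f=e^{h}\hat f$, $F_i=e^{h}\hat F_i$ is a bijection between holomorphic solutions of $\sum g_i F_i=f$ and of $\sum g_i\hat F_i=\hat f$, and since $\abs{F}^2e^{-\phi}=\abs{\hat F}^2e^{-(\phi-2\Re h)}$ (and likewise for $f$), it converts both integrands in (\ref{converse ie 2}) into the corresponding integrands for the weight $\hat\phi:=\phi-2\Re h$ and data $(\hat f,g)$. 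As $e^{h}$ has bounded modulus with bounded inverse on $\tilde\Omega$, integrability is preserved, so the infimum ratio for $(\phi,f,g)$ equals that for $(\hat\phi,\hat f,g)$, with $g$ unchanged.

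First I would take $h=L+\tfrac12 Q$, where $L(w)=\sum_i\frac{\partial\phi}{\partial w_i}(0)w_i$ and $Q(w)=\sum_{i,j}\frac{\partial^2\phi}{\partial w_i\partial w_j}(0)w_iw_j$ are the holomorphic linear and quadratic parts of the $2$-jet of $\phi$ at $0$. Then $\hat\phi=\phi-2\Re h$ is $C^2$, has $d\hat\phi(0)=0$ and vanishing holomorphic quadratic part, and $\dd\hat\phi(0)=\dd\phi(0)$, since subtracting a pluriharmonic function leaves the complex Hessian unchanged; after a unitary change of coordinates this is exactly the normal form $\sum_i a_i\abs{w_i}^2+O(\abs{w}^3)$ used in Lemma \ref{converce lemma 1}. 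The key algebraic point is that the hypothesis transfers: $\imaginary\partial\phi\wedge\dbar\phi(0)$ is the rank-one Hermitian form $v\mapsto\abs{\sum_i\frac{\partial\phi}{\partial w_i}(0)v_i}^2\ge0$, so if $\xi\neq0$ makes $\dd\phi(0)+\imaginary\partial\phi\wedge\dbar\phi(0)$ negative, then $\dd\phi(0)$ is already negative on $\xi$. Hence $\dd\hat\phi(0)=\dd\phi(0)$ has a negative eigenvalue, which is precisely the hypothesis of Lemma \ref{converce lemma 1}.

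Finally I would invoke Lemma \ref{converce lemma 1} for $\hat\phi$, obtaining a sufficiently large positive definite Hermitian form $A$, a nonzero holomorphic $\hat f$, and $g=(g_1,\dots,g_n)$ on $\tilde\Omega=E(A)$ with $\abs{g}^2<1$, $dg_1\wedge\dots\wedge dg_n\neq0$, and infimum ratio $>1$; then, setting $f:=e^{h}\hat f$ (holomorphic and nowhere zero, as $e^{h}$ is entire and nonvanishing) and keeping the same $g$ and $\tilde\Omega$, the invariance above gives (\ref{converse ie 2}) for $(\phi,f,g)$. The ellipsoid $E(A)$ is convex, hence pseudoconvex, and contains $0$, so all required properties hold. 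I expect the main points needing care to be bookkeeping rather than conceptual: checking that the holomorphic-factor substitution genuinely preserves the optimization (constraint, convergence, and the value of the infimum), and confirming that one must remove the full holomorphic $2$-jet --- not merely the linear part --- so that $\hat\phi$ matches the normal form of Lemma \ref{converce lemma 1}. The positivity $\imaginary\partial\phi\wedge\dbar\phi\ge0$ is exactly what lets this lemma's hypothesis, stated in terms of $\dd\phi+\imaginary\partial\phi\wedge\dbar\phi$ (equivalently, the failure of $e^{\phi}$ to be plurisubharmonic), suffice to trigger Lemma \ref{converce lemma 1}.
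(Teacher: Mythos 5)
Your proposal is correct, and it reaches Lemma \ref{converce lemma 1} by a genuinely different normalization than the paper. The paper also reduces to that lemma, but by a coordinate change: after a unitary rotation placing the gradient in the $\Re w_1$ direction, it performs the quadratic holomorphic change $u_1=w_1-\tfrac{c}{4}w_1^2$, lets the Jacobian factor absorb the linear term of $\phi$, and asserts that the new weight $\Phi$ satisfies $\dd\Phi(0)=\dd\phi(0)+\imaginary\partial\phi(0)\wedge\dbar\phi(0)$. You instead twist the data by $e^{h}$ with $h$ the holomorphic $2$-jet, which shifts the weight by the pluriharmonic function $2\Re h$, leaves $\dd\phi(0)$ untouched, and transfers the hypothesis through the observation that $\imaginary\partial\phi\wedge\dbar\phi(0)\ge 0$ forces $\dd\phi(0)$ itself to have a negative eigenvalue. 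Your route is the more robust of the two: since $\log\abs{J}^2$ is pluriharmonic for any nonvanishing holomorphic $J$, the paper's change of variables can likewise only alter the weight by a pluriharmonic function, so its displayed identity for $\dd\Phi(0)$ appears to be a slip (expanding $\log\abs{1-\tfrac{c}{2}w_1}^2=-c\Re w_1-\tfrac{c^2}{4}\Re w_1^2+O(\abs{w_1}^3)$ gives $\dd\Phi(0)=\dd\phi(0)$), and the paper's argument survives precisely because of the rank-one positivity remark you make explicit. Two further points in your favour: the substitution $f=e^{h}\hat f$, $F=e^{h}\hat F$ manifestly preserves the constraint, the holomorphy, and both integrals on the bounded ellipsoid, so the infimum ratio is exactly invariant; and removing the full holomorphic quadratic part --- not merely the linear one --- is indeed what is needed to match the normal form $\sum a_i\abs{w_i}^2+O(\abs{w}^3)$ that the proof of Lemma \ref{converce lemma 1} obtains by a purely unitary change of coordinates.
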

   \begin{proof}
    We can assume that $\phi(0) =0$. Since Lemma\ref{converce lemma 1}, if $d\phi(0) = 0$, then inequality \ref{converse ie 2} holds. Thus we assume that $d\phi(0) \ne 0$. We get taylor series
    \begin{equation}
        \phi(z) = \sum a_i \Re z_i + \sum b_i \Im z_1 + \phi_2 + O(\abs{z}^3)
    \end{equation}
    where $a_i$ and $b_i$ are some real number, and $\phi_2$ is a second order term. By transformation $(z_1,\dots, z_n) \rightarrow (e^{i\theta_1}z_1,\dots,e^{i\theta_n}z_n)$, we get
    \begin{equation}
        \phi(z) = \sum c_i \Re z_i + \tilde{\phi_2 } + O(\abs{z}^3)
    \end{equation}
    where $ c_i $ are some real numbers and $\tilde{\phi_2}$ is a second order term. By some unitary transformation, we geometry coordinate $(w_1,\dots,w_n)$ such that
    \begin{equation}
        \phi(w) = c \Re w_1 +\tilde{\phi_2}(w) + O(\abs{w}^3)
    \end{equation}
    where $c$ is a non zero real number. By local coordinate change $(u_1,\dots,u_n) = (w_1 - \frac{c}{4} w_1 ^2,\dots,w_n)$,  we get
    \begin{gather}
        e^{-\phi(w)} d\lambda(w) = e^{-\Phi(u)} d\lambda(u),\\
        \Phi(u) = \tilde{\phi_2}(u) + \frac{c^2}{4} \abs{u_1}^2 + O(\abs{u}^3).
    \end{gather}
    Thus we get matrix representation 
    \begin{equation}
        \dd\Phi(0) = \dd\phi(0) + \imaginary\partial\phi(0)\wedge\dbar\phi(0).
    \end{equation}
    and $\dd\Phi(0)$ has a some negative eigenvalue. Since Lemma \ref{converce lemma 1}, inequality (\ref{converse ie 2}) holds.
   \end{proof}
   We are now ready to prove the Theorem B.
   \begin{proof}[(proof of {\rm Theorem B})]
    \quad\par \quad\par
    (1) in Theorem B follows immediately from Lemma \ref{converce lemma 2}.\\

    If $\dd\phi$ has some negative eigenvalue at some point $x$, then for sufficiently small $\epsilon >0$, $\dd (\epsilon\phi)(x) + \imaginary \partial (\epsilon\phi)(x) \wedge \dbar (\epsilon\phi)(x)$ has a some negative eigenvalue. Thus (2) follows from (1)\\

    If $\dd\phi$ has some negative eigenvalue at some point $x$, then we can take affine function $\psi$ such that $d(\phi +\psi)(x) =0$. Thus (3) follows from Lemma \ref{converce lemma 1}.
   \end{proof}
\end{section}
\begin{section}{A proof of sharp $L^2$-extension theorem}\label{A proof of sharp L2 extension theorem}
    In this section, we provide a new proof of Guan--Zhou's sharp $L^2$-- extension theorem via the sharp $L^2$--division theorem. Again, the idea is essentially based on \cite{A25}.
    First, we explain the $L^2$--division theorem for multiplier ideal sheaves, which essentially contains Skoda's original division theorem.\\
    
    \begin{thm}\label{division theorem for multiplier ideal sheaf}
        Let $X$ be an n-dimensional stein manifold, $\psi,\phi_1,\dots,\phi_r$ be plurisubharmonic functions such that $\Phi = \log(\sum_{i=1}^r e^{\phi_i}) < 1$, and $(C,D,S) \in \cG_{(-\infty,0)}$. We put $q = \min(n,r-1)$. For any holomorphic n-form $f$ on $X$ such that $$\int_{X} (C(\Phi)+qD(\Phi)) \imaginary^{n^2} f \wedge\bar{f} e^{-\psi -(q+1)\Phi}< +\infty$$, there exists holomorphic n-forms $F_i$ such that $ \sum F_i = f$ and $$\sum_{i=1}^r \int_{X}  \imaginary^{n^2} C(\Phi) F_i \wedge \bar{F}_i e^{-\phi_i-q\Phi-\psi} \le \int_{X} (C(\Phi)+qD(\Phi)) \imaginary^{n^2} f \wedge\bar{f} e^{-\psi -(q+1)\Phi}.$$
        It implies $\cI((q+1)\log(\sum e^{\phi_i})) \subset \sum \cI(\phi_i)$. 
    \end{thm}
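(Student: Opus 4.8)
The plan is to read the division $\sum_iF_i=f$ as the bundle--morphism division of Theorem~A for the ``sum'' map. I would take $E=\bigoplus_{i=1}^r(\cO,e^{-\phi_i})$, the orthogonal direct sum of the trivial line bundle carrying the (psh) weights $\phi_i$, and $Q=\cO$ with the trivial smooth metric, and let $g\colon E\to Q$ be the fibrewise sum $g(v_1,\dots,v_r)=\sum_iv_i$, everything twisted by the line bundle $(\cO,e^{-\psi})$. Since $f$ and the $F_i$ are holomorphic $n$--forms, I work in the $K_X$--valued $(n,0)$--form setting, so $f$ is a section of $K_X\otimes Q$, $F=(F_1,\dots,F_r)$ a section of $K_X\otimes E=K_X^{\oplus r}$, and $g(F)=f$ is exactly $\sum_iF_i=f$. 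Here $r_E=r$, $r_Q=1$, $k=0$, hence $q=\min(n-k,r_E-r_Q)=\min(n,r-1)$, as required.

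The heart of the identification is the singular metric $e^{-\tilde\phi}$ on $\det Q=Q$ induced by $g$. For the generator $1\in Q$ the minimal lift has squared norm $\inf\{\sum_ie^{-\phi_i}\abs{v_i}^2:\sum_iv_i=1\}$, which by Cauchy--Schwarz equals $1/\sum_ie^{\phi_i}$; thus $\tilde\phi=\log\sum_ie^{\phi_i}$, and with the smooth weight $\phi_A=\psi$ on $Q$ one gets $\Phi=\tilde\phi-\phi_A=\log\sum_ie^{\phi_i}$, matching the $\Phi$ of the statement. Because $g^*$ is fibrewise the partial isometry onto $(\ker g)^\perp$, one has $\abs{g^*f}^2_{h_E}=\abs{f}^2_{h_Q}$, so (normalizing with $\imaginary^{n^2}$, and using that for $K_X$--valued $(n,0)$--forms the pointwise norm is $\omega$--independent) $\abs{g^*f}^2_{h_E,\omega}\,dV_\omega=\imaginary^{n^2}f\wedge\bar f\,e^{-\psi-\Phi}$ and $\abs{F}^2_{h_E,\omega}\,dV_\omega=\sum_i\imaginary^{n^2}F_i\wedge\bar F_i\,e^{-\phi_i-\psi}$. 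Substituting these into the two integrals of Theorem~A reproduces the asserted inequality verbatim.

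Next I would verify the curvature hypotheses (\ref{condition 1}) and (\ref{condition 2}). After the twist, $\imaginary\Theta_E=\diag(\dd\phi_1,\dots,\dd\phi_r)+\dd\psi\otimes Id_E$, which is Nakano semipositive as an orthogonal sum of semipositively curved line bundles; in particular $\imaginary\Theta_E\ge_q0$, giving (\ref{condition 1}). Since $r_Q=1$ and $\phi_A=\psi$, condition (\ref{condition 2}) reads $\bigl(q+1+1/S(\Phi)\bigr)\imaginary\Theta_E\ge_q\bigl(q+1/S(\Phi)\bigr)\dd\psi\otimes Id_E$, and its left--minus--right side is $\bigl(q+1+1/S(\Phi)\bigr)\diag(\dd\phi_i)+\dd\psi\otimes Id_E\ge_q0$; both hold with room to spare. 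I would also note that $g$ is surjective at every point, so Step~2 of the proof of Theorem~A (extension across the degeneracy locus) is not needed here.

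The one genuine obstacle is regularity: the $\phi_i$ and $\psi$ are only psh, so $h_E$ is singular, whereas Theorem~A is stated for smooth metrics. I would remove this by the standard regularization on the Stein manifold $X$, approximating each $\phi_i$ and $\psi$ by a decreasing sequence of smooth strictly psh functions, applying the smooth case to get solutions $F^{(\nu)}$ with uniformly bounded norms (the right--hand side converging by monotone convergence, and with the range of $\Phi$ kept inside the interval on which $(C,D,S)$ is defined, as guaranteed by the hypothesis together with Definition \ref{def of gain}), and then extracting a weak limit $F$ with $\sum_iF_i=f$; lower semicontinuity of the $L^2$ norms yields the estimate. I expect this limiting argument, rather than the algebraic identification above, to be the main difficulty. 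Finally, the inclusion $\cI\bigl((q+1)\log\sum_ie^{\phi_i}\bigr)\subset\sum_i\cI(\phi_i)$ follows by localizing (subtracting a constant from the $\phi_i$ to arrange $\Phi<0$, which is harmless for multiplier ideals) and taking $\psi=0$ with the explicit triple $C(t)=e^{qt}$, $D(t)=(1-e^{qt})/q$: then $C+qD\equiv1$ and $C(\Phi)e^{-q\Phi}\equiv1$, so a germ $f$ with $\int\abs{f}^2e^{-(q+1)\Phi}<\infty$ splits as $f=\sum_iF_i$ with $\sum_i\int\abs{F_i}^2e^{-\phi_i}<\infty$, i.e. each $F_i\in\cI(\phi_i)$, whence $f\in\sum_i\cI(\phi_i)$.
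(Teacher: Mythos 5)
Your proposal is correct and takes essentially the same route as the paper: the paper likewise realizes the division as Theorem A applied to the trivial bundle $E=X\times\C^r$ with the diagonal singular metric $\diag(e^{-\phi_1},\dots,e^{-\phi_r})$ and the sum morphism onto $Q=\cO$, regularizing the weights by decreasing smooth psh approximations on an exhaustion by relatively compact Stein domains and passing to a weak limit. Your write-up simply makes explicit the identifications (the induced metric $\tilde\phi=\log\sum e^{\phi_i}$, the curvature conditions, and the choice $C(t)=e^{qt}$ for the multiplier-ideal inclusion) that the paper leaves implicit.
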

    \begin{proof}
    The proof is obtained by approximating the singular Hermitian metric $h = \diag(e^{-\phi_1},\dots,e^{-\phi_r})$ of the trivial bundle $E = X\times \mathbb{C}^r$ with smooth Hermitian metrics whose chern curvature is Nakano semi-positive.
    
    We can take a increasing sequence of relatively compact stein domains $\{X_m\}$ such that $\cup_m X_m = X$. Then we also take decreasing sequence of smooth plurisubharmonic functions $\{\phi_{i,m,k}\}$ on $X_m$ such that $\lim_{k\rightarrow +\infty} \phi_{i,m,k} = \phi_i$ on $X_m$. Then the smooth hermitian metric $h_{m,k} = \diag(e^{-\phi_{1,m,k}},\dots,e^{-\phi_{r,m,k}})$ of the  trivial vector bundle $E = X \times \mathbb{C}^r$ is Nakano semi-positive. Applying Theorem A to the data $\{X_m,h_{m,k}\}$ with $(C,D,S) \in \cG_{(-\infty,0)}$ and letting $k,m \rightarrow \infty$, we obtain assertion.
    \end{proof}
    For holomorphic functions $g_1,\dots,g_r$, we put $\phi_i = \log{\abs{g_i}^2}$, we recover the division problem for holomorphic functions.\\
    Nextly, to prove Theorem C, we present a following calculus lemma.
\begin{lem}\label{calculus lemma}
    Let X be a complex manifold and $\Omega$ be a relatively compact sub-domain. Let $S$ be a smooth closed submanifold in $X$ with $\codim S = k$. Assume that there exists holomorphic functions $T_1,\dots,T_k$ such that $$ S = \{T_1 = \dots = T_k =0\}\;\;\;and$$ $dT \ne 0$ on $S$ where $dT = dT_1 \wedge \dots \wedge dT_k$. For any continuous semipositive real $(n,n)$-form $dV$ and continuious function $B(x)$ on $X$, we have
    $$\lim_{t \rightarrow +\infty} \frac{1}{1+t}\int_{\Omega} \frac{dV}{(\frac{1}{1+t} + \frac{t}{1+t} e^{B(x)}\abs{T}^{2k})^2} = \frac{\pi^{k}}{k!}\int_{\Omega \cap S} \frac{dV e^{-B}}{\i^{k^2} dT \wedge d\bar{T}}.$$
    
\end{lem}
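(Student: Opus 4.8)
The plan is to exploit the concentration of the integrand near $S$ as $t\to+\infty$. Writing $\epsilon=\frac{1}{1+t}$, the integrand equals $\epsilon\,(\epsilon+(1-\epsilon)e^{B}\abs{T}^{2k})^{-2}\,dV$, which tends to $0$ pointwise off $S$ and is large only where $\abs{T}^{2k}\sim\epsilon$, i.e. in a tube of radius $\sim\epsilon^{1/2k}$ around $S$. First I would fix a relatively compact neighborhood and a finite cover of $\bar{\Omega}\cap S$ (compact, since $S$ is closed and $\Omega$ relatively compact) together with a subordinate partition of unity, reducing everything to a local computation; on the complement $\{\abs{T}\geq\delta\}$ the denominator is bounded below, so that piece of the integral is $O(\epsilon)$ and drops out in the limit.

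Next, near a point of $S$ I would invoke the hypothesis $dT\neq0$ on $S$ and the holomorphic implicit function theorem to choose local coordinates $(w,z')=(w_1,\dots,w_k,z'_1,\dots,z'_{n-k})$ with $w_i=T_i$, so that $S=\{w=0\}$ and, crucially, $\abs{T}^2=\abs{w}^2$ \emph{exactly}. In these coordinates write $dV=G(w,z')\,d\lambda(w)\,d\lambda(z')$ with $G$ continuous and non-negative; by definition the restriction of $dV/(\imaginary^{k^2}dT\wedge d\bar{T})$ to $S$ is then $G(0,z')\,d\lambda(z')$. By Fubini the local contribution becomes $\int d\lambda(z')$ of the inner normal integral $\epsilon\int_{\C^k}G(w,z')\,(\epsilon+(1-\epsilon)e^{B}\abs{w}^{2k})^{-2}\,d\lambda(w)$.

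The core is the model computation. Substituting $w=\epsilon^{1/2k}\zeta$ turns the inner integral into $\int_{\C^k}G(\epsilon^{1/2k}\zeta,z')\,(1+(1-\epsilon)e^{B}\abs{\zeta}^{2k})^{-2}\,d\lambda(\zeta)$, and by continuity $G$ and $B$ may be replaced by their values on $S$ as $\epsilon\to0$. Using the radial formula $\int_{\C^k}f(\abs{\zeta}^2)\,d\lambda=\frac{\pi^k}{(k-1)!}\int_0^\infty f(u)u^{k-1}\,du$ and then $v=u^k$, one obtains
\[
\int_{\C^k}\frac{d\lambda(\zeta)}{(1+e^{B}\abs{\zeta}^{2k})^2}=\frac{\pi^k}{k!}\int_0^\infty\frac{dv}{(1+e^{B}v)^2}=\frac{\pi^k}{k!}e^{-B}.
\]
Hence each inner integral converges to $\frac{\pi^k}{k!}G(0,z')e^{-B(z')}$; integrating over $z'$ and recombining the partition of unity yields precisely $\frac{\pi^k}{k!}\int_{\Omega\cap S}\frac{dV\,e^{-B}}{\imaginary^{k^2}dT\wedge d\bar{T}}$.

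The main obstacle is justifying the interchange of the limit with the integral. I expect the decisive point to be producing a single dominating function for the rescaled integrands valid for all small $\epsilon$: on the tube one needs a uniform positive lower bound for $e^{B}$ and boundedness of $G$ (both from compactness of $\bar{\Omega}\cap S$) in order to dominate by $C(1+c\abs{\zeta}^{2k})^{-2}$, which is integrable on $\C^k$ since its tail decays like $\abs{\zeta}^{-4k}$. Some care is also needed that the coordinate patches do not distort the normal radius $\abs{T}$ — this is exactly where the identity $\abs{T}^2=\abs{w}^2$ saves us — and that the off-tube estimate is uniform; the remaining radial integration recorded above is routine.
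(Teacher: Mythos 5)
Your proposal is correct and follows essentially the same route as the paper: localize to coordinates in which $w_i=T_i$ so that $S=\{w=0\}$, discard the $O\!\left(\frac{1}{1+t}\right)$ contribution away from a small tube, replace $dV$ and $B$ by their values on $S$ using uniform continuity on the relatively compact domain, and reduce by Fubini to the radial model integral $\int \frac{r^{2k-1}dr}{(1+e^{B}tr^{2k})^2}$, which both arguments evaluate to $\frac{\pi^k}{k!}e^{-B}$ (you via the rescaling $w=\epsilon^{1/2k}\zeta$ and dominated convergence, the paper via a direct change of variables in the integral over the unit ball). The differences are purely organizational, so no further comparison is needed.
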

\begin{proof}
    Since $\Omega$ be a relatively compact, thus $dV$ is uniformly continuious and integrable on $\Omega$. For any $s \in \Omega \cap S$, there exists local cordinate $((\mathbb{B}_k (1) \times \mathbb{B}_{n-k} (1) ), (z_1,\dots,z_k,w_1,\dots,w_{n-k}))$ such that $s = 0$ and $z_i = T_i$ for $i = 1,\dots,k$. Thus it suffice to show in the case that $X = \mathbb{B}_k (1) \times \mathbb{B}_{n-k}(1)$, $S = \mathbb{B}_{n-k} (1)$ and $T_i = z_i$. For any $\delta >0$, 
    $$\lim_{t \rightarrow +\infty} \frac{1}{1+t}\int_{\mathbb{B}_k (1) \times \mathbb{B}_{n-k} (1) \backslash \mathbb{B}_k (\delta) \times \mathbb{B}_{n-k} (1) } \frac{dV}{(\frac{1}{1+t} + \frac{t}{1+t} e^{B(z,w)}\abs{z}^{2k})^{2}} = 0.$$
    For any $\epsilon>0$, we can take sufficiently small $\delta_{\epsilon}>0$ with $$dV(z,w) \ge (1+\epsilon)dV(0,w)\;\;\;and$$ $$B(z,w) \ge B(0,w) + \epsilon $$on $\mathbb{B}_k (\delta_{\epsilon}) \times \mathbb{B}_{n-k} (1)$. Thus by taking sufficiently small $\epsilon$ and Fubini's lemma, it also suffices to show that $$\lim_{t \rightarrow +\infty} \frac{1}{1+t}\int_{\mathbb{B}_k (1)} \frac{d\lambda}{(\frac{1}{1+t} + \frac{t}{1+t} e^{B}\abs{z}^{2k})^{2}} = \frac{\pi^{k} e^{-B}}{k!}$$ for a constant $B$. It follows that

    \begin{align}
    \frac{1}{1+t}\int_{\mathbb{B}_k (1)} \frac{d\lambda}{(\frac{1}{1+t} + \frac{t}{1+t}e^{B} \abs{z}^{2k})^2}&= 2k\times \frac{\pi^{k}}{k!}\frac{1}{1+t}\int_{0}^{1} \frac{r^{2k-1}dr}{(\frac{1}{1+t} + \frac{t}{1+t} e^{B}\abs{r}^{2k})^2}\\ 
    &= 2k\times \frac{\pi^{k}}{k!} (1+t) \int_{0}^{1} \frac{r^{2k-1}dr}{(1 + e^{B}tr^{2k})^2}\\
    &=  2k\times \frac{\pi^{k}}{k!} \frac{(1+t)e^{-B}}{t}\int_{0}^{e^{\frac{k}{2}}t^{\frac{1}{k}}} \frac{r^{2k-1}dr}{(1 + r^{2k})^2}\\
    &\rightarrow \frac{\pi^{k}e^{-B}}{k!}.
\end{align}

\end{proof}
\begin{proof}[Proof of Theorem C]
First, we prove theorem on relatively compact stein sub-domain $Y \subset X$. In addition, we assume that $\phi$ is continuious. 
For the positive real number $t$, we put tuple of plurisubharmonic functions $(\phi_1,\phi_2) = \left(\log(\frac{t}{1+t})+\Psi ,\log(\frac{1}{1+t})\right)$ and $\Psi_t = \log(\frac{t}{1+t}e^{\Psi} + \frac{1}{1+t})$. Since $X$ is stein, we can find a holomorphic $n$-form $\tilde{F}$ on X such that $\tilde{F}\vert_S = f$. By using Theorem \ref{division theorem for multiplier ideal sheaf},
we can find a solution $(F_t,G_t)$ such that $ F_t+G_t = \tilde{F}$ and 
$$\int_{Y} c(\Psi_t) \left(\frac{1+t}{t}\abs{G_t}^2 e^{-\Psi} + (1+t)\abs{F_t}^2\right) e^{-\phi} \le \int_{Y} \left( c(\Psi_t) +\frac{D({\Psi_t})}{(\frac{1}{1+t} + \frac{t}{1+t} e^{\Psi})}\right) \abs{\tilde{F}}^2 e^{-\phi}.
$$   Since $\abs{G}^2 \abs{T}^{-2k}$ is locally integrable, we have that $G$ is vanish on $S$. Thus we can find extension $F_t$ with the estimate

$$\int_{Y} c(\Psi_t) \abs{F_t}^2 e^{-\phi} \le \int_{Y} \frac{1}{1+t}\left( c(\Psi_t) +\frac{D({\Psi_t})}{(\frac{1}{1+t} + \frac{t}{1+t} e^{\Psi})}\right) \abs{\tilde{F}}^2 e^{-\phi}.
$$
Since $Y$ is relatively compact, we also have that $c(\Psi_t)\abs{\tilde{F}}^2 e^{-\phi}$ is upper bounded. Thus by Lemma \ref{calculus lemma}, we get a extension with the estimate we seek, by letting $t \rightarrow +\infty$. When $\phi$ is not continuious, one can consider the decreasing sequence of continuious plurisubharmonic functions ${\phi_j}$ on $Y$ (Such a sequence can always be taken, since $X$ is stein), and take the limit $i \rightarrow \infty$.
To obtain a solution on $X$, we take an increasing sequence of relayively compact stein sub-domain $X_j$ converging to $X$. It suffice to apply the above argument to $X_j$.
\end{proof}
\end{section}
\begin{section}{Another characterization of plurisubharmonicity via the sharp $L^2$-division}
In \cite{DWZ18}, they show that the weight function $e^{-\phi}$ can yield the sharp $L^2$-extension theorem only if $\phi$ is plurisubharmonic. By observing the proof of Theorem C in Section \ref{A proof of sharp L2 extension theorem}, we are led to define an another $L^2$- division property that also characterizes plurisubharmonicity.\\
The following lemma is usefull criterion for verifying plurisubharmonicity.
    \begin{lem}[\cite{Ka22}]\label{mean value inequality}
        Let $\Omega \subset \mathbb{C}^n$ be a domain, and $\phi$ be a upper semicontinuious and locally integrable function on $\Omega$. If for any $x \in \Omega$ and $x+E(A) \subset \Omega$, the mean value inequality 
        \begin{equation}\label{mean value inequality for ellipsoids}
            \phi(x) \le \frac{1}{\vol{(E(A))}} \int_{x+E(A)} \phi(y) d\lambda(y)
        \end{equation}
        holds, then $\phi$ is plurisubharmonic.
    \end{lem}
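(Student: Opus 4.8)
The plan is to reduce to the case of a smooth $\phi$ and then read off the positivity of the complex Hessian from the second-order behaviour of the averages over small ellipsoids. First I would note that balls are themselves ellipsoids, namely $E(r^{-2}\,\mathrm{Id})$, so the hypothesis already contains the sub-mean value inequality over all small balls; by the classical characterization of subharmonic functions this makes $\phi$ subharmonic, and hence its mollifications $\phi_\e=\phi*\r_\e$ (with a radially decreasing kernel $\r_\e$) are smooth, subharmonic, and decrease to $\phi$ as $\e\downarrow0$ on the interior domains $\Omega_\e$. The crucial point is that these mollifications \emph{inherit} the ellipsoid inequality: writing $M_A u(x)=\vol(E(A))^{-1}\int_{E(A)}u(x+y)\,d\lambda(y)$, a Fubini interchange gives $M_A(\phi*\r_\e)=(M_A\phi)*\r_\e\geq \phi*\r_\e$, valid on the slightly shrunken domain where $x+E(A)+\overline{B(0,\e)}\subset\Omega$. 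Thus each $\phi_\e$ is smooth and satisfies the same hypothesis.

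Next I would treat the smooth case by a Taylor expansion. Fix $x$ and expand $\phi_\e(x+w)-\phi_\e(x)$ to second order in $w$. Since $E(A)=\{w:\sum a_{i\bar j}w_i\bar w_j<1\}$ is invariant under the scalar rotation $w\mapsto e^{\imaginary\t}w$, every term of nonzero holomorphic degree — the linear term and the holomorphic quadratic term $\sum_{i,j}\frac{\partial^2\phi_\e}{\partial z_i\partial z_j}w_iw_j$ — integrates to zero over $E(A)$, leaving only the mixed Hessian term. Writing $H(x)=\big(\partial^2\phi_\e/\partial z_i\partial\bar z_j\big)(x)$ and using the linear change of variables carrying $E(A)$ to the unit ball to evaluate $\int_{E(A)}\sum_{i,j}H_{i\bar j}w_i\bar w_j\,d\lambda=c_n\,\Tr\!\big(H(x)A^{-1}\big)$, one finds
\[
\frac{1}{\vol(\d E(A))}\int_{\d E(A)}\big(\phi_\e(x+w)-\phi_\e(x)\big)\,d\lambda(w)
=\frac{c_n\,\d^2}{\vol(E(A))}\,\Tr\!\big(H(x)A^{-1}\big)+o(\d^2).
\]
The hypothesis forces the left-hand side to be nonnegative; dividing by $\d^2$ and letting $\d\to0$ gives $\Tr(H(x)A^{-1})\geq0$.

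Here lies the conceptual heart of the argument, and the reason ellipsoids rather than balls are needed: as $A$ ranges over all positive definite Hermitian forms, $A^{-1}$ ranges over all of them as well, and $\Tr(HB)\geq0$ for every positive definite $B$ is equivalent to $H\geq0$. (With balls alone one would only recover $\Tr H\geq0$, i.e.\ subharmonicity.) Hence $\dd\phi_\e\geq0$, so every $\phi_\e$ is plurisubharmonic. Finally I would pass to the limit: $\phi=\lim_{\e\downarrow0}\phi_\e$ is a decreasing limit of plurisubharmonic functions, and it is not identically $-\infty$ because $\phi$ is locally integrable, so $\phi\in\PSH(\Omega)$.

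The main obstacle I anticipate is not any single computation but the bookkeeping that ties them together: verifying that mollification genuinely preserves the ellipsoid inequality on the shrinking domains (so that the smooth analysis applies), and, at the end, justifying that the decreasing limit of the $\phi_\e$ recovers exactly $\phi$ and is plurisubharmonic — this uses upper semicontinuity to get $\limsup_\e\phi_\e\leq\phi$ together with the sub-mean value bound $\phi_\e\geq\phi$ coming from subharmonicity. The symmetry cancellation of the linear and holomorphic-quadratic terms and the evaluation of $\int_{E(A)}w_i\bar w_j$ are routine once the invariance under $w\mapsto e^{\imaginary\t}w$ is exploited.
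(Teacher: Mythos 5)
The paper does not prove this lemma at all: it is imported verbatim from \cite{Ka22}, so there is no in-text argument to compare against. Your proof is correct and self-contained, and it is worth recording why. The reduction to the smooth case is sound: balls are the ellipsoids $E(r^{-2}\mathrm{Id})$, so the hypothesis already yields subharmonicity, and the Fubini identity $M_A(\phi*\r_\e)=(M_A\phi)*\r_\e\ge\phi*\r_\e$ does transfer the ellipsoid inequality to the mollifications on the shrunken domains. In the smooth case the rotation invariance of $E(A)$ under $w\mapsto e^{\imaginary\t}w$ kills exactly the terms of nonzero holomorphic degree, the Gram matrix $\int_{E(A)}w_i\bar w_j\,d\lambda$ is a positive multiple of $(A^{-1})_{i\bar j}$, and $\Tr(HB)\ge 0$ for all positive definite $B$ forces $H\ge 0$ (test on $B=vv^*+\epsilon I$); this is precisely the point where ellipsoids buy you more than balls, as you say. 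The limit passage is also fine: $\phi_\e\downarrow\phi$ for subharmonic $\phi$ with a radial kernel, and a decreasing limit of plurisubharmonic functions that is locally integrable is plurisubharmonic. For comparison, the argument in \cite{Ka22} (and the one most natural for a merely upper semicontinuous $\phi$) avoids mollification by degenerating the ellipsoids: letting $A$ become very eccentric squeezes $x+E(A)$ onto a disc in a complex line, so the ellipsoid averages converge to disc averages and one reads off subharmonicity on every complex line directly. That route is slightly more economical and works verbatim without the smoothing step, but your Hessian computation is more transparent about \emph{why} the full family of ellipsoids encodes positivity of $\dd\phi$ rather than just of its trace. The only cosmetic slip is the normalizing constant in your displayed formula (the factor should be $\d^2$ times a positive constant depending on $A$ and $n$, not literally $c_n\d^2/\vol(E(A))$ with a universal $c_n$), which does not affect the sign argument.
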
   

        Now we recall how the sharp $L^2$-extension property characterizes plurisubharmonicity (see Definition\ref{L2 properties} or \cite{DNWZ23}). In \cite{DNWZ23} they defined the $L^2$-property using holomorphic cylinders, but the same result can be obtained using an ellipsoids instead of holomorphic cylinders by Lemma \ref{mean value inequality}.
        Assume that for any ellipsoid $x+E(A)$, there exists holomorphic function satisfying $f(x) = 1$ and $$\int_{x+E(A)}\abs{f}^2 e^{-\phi} \frac{d\lambda}{\vol(E(A))} \le e^{-\phi(x)}.$$
        By the Young inequality and plurisubharmonicity of $\log{\abs{f}^2}$, we obtain the inequality
        $$ \int_{x+E(A)} \phi(x) \frac{d\lambda}{\vol(E(A))} \ge \int_{x+E(A)} (\phi(x) - \log{\abs{f}^2}) \frac{d\lambda}{\vol{E(A)}} \ge \phi(x).$$
        By Lemma \ref{mean value inequality}, it turns out that $\phi$ is plurisubharmonic.\\
        \begin{proof}[Proof of Theorem D]
        If $\phi$ satisfies the condition in Theorem D, then we can show that $\phi$ has sharp $L^2$-extension property for ellipsoids as a same way in the proof of Theorem C by letting $\theta \rightarrow +0$. It implies that $\phi$ is plurisubharmonic by the above argument.
    \end{proof}
  \end{section}
\section*{Acknowledgement}
  I am deeply grateful to Professor Hisamoto for offering valuable advice during the preparation of this manuscript.
I also thank my parents for their steady support throughout my studies in mathematics.

\end{document}